\title[Optimal dimension dependence of MALA]{Optimal dimension dependence of the Metropolis-Adjusted Langevin Algorithm}
\def\set@curr@file#1{\def\@curr@file{#1}} 
\begin{document}

\maketitle

\begin{abstract}%
    
    
    Conventional wisdom in the sampling literature, backed by a popular diffusion scaling limit, suggests that the mixing time of the Metropolis-Adjusted Langevin Algorithm (MALA) scales as $O(d^{1/3})$, where $d$ is the dimension. However, the diffusion scaling limit requires stringent assumptions on the target distribution and is asymptotic in nature. In contrast, the best known non-asymptotic mixing time bound for MALA on the class of log-smooth and strongly log-concave distributions is $O(d)$. In this work, we establish that the mixing time of MALA on this class of target distributions is $\widetilde\Theta(d^{1/2})$ under a warm start. 
    Our upper bound proof introduces a new technique based on a projection characterization of the Metropolis adjustment which reduces the study of MALA to the well-studied discretization analysis of the Langevin SDE and bypasses direct computation of the acceptance probability.
\end{abstract}

\begin{keywords}%
  Metropolis-Adjusted Langevin Algorithm, sampling%
\end{keywords}

\section{Introduction}


Sampling from a target distribution is a central problem that arises in many areas of scientific computing and statistics~\citep{liu2008monte, robert2013monte}. The class of Metropolis-Hastings (MH) adjusted algorithms~\citep{metropolis1953equation, hastings1970monte}, which includes the Random Walk Metropolis algorithm (RWM), the Metropolis-Adjusted Langevin Algorithm (MALA), and Hamiltonian Monte Carlo (HMC), is particularly popular in practice. As such, their convergence properties are of central theoretical and practical interest. More specifically, with the ever-growing size of sample spaces, a precise characterization of how dimension affects convergence rates is a necessary step to develop a better understanding and, ultimately, practical guidelines for this suite of algorithms. In this work, we address this pressing question by characterizing the dimension dependence of MALA over a natural class of distributions. 


Formally, we consider the task of sampling from a target distribution $\pi$ supported on $\R^d$, with density $\pi(\bx) \propto \exp(-V(\bx))$, where $V:\R^d\to \R$ is a strongly convex and smooth potential.  \cite{roberts1997weak} initiated the study of dimension dependence of RWM by means of an asymptotic framework: namely, when $\pi$ is a product distribution, a scaling limit exists for RWM as the dimension tends to infinity with a dimension-dependent step size $h \approx d^{-1}$, thereby suggesting that the number of steps needed for RWM to reach stationarity is on the order of $d$. Subsequently,~\cite{roberts1998optimal}  \citep[see also][]{pillaistuartthiery2012optimalscaling} extended the scaling limit approach to MALA, suggesting that the dimension dependence for MALA is $d^{1/3}$ for sufficiently regular potentials and step size $h \approx d^{-1/3}$. Beyond its theoretical implications, this result has had a tremendous practical impact by guiding the choice of step size for MALA even for distributions far beyond the scope of their seminal paper. Understanding the applicability of this result, and ultimately the optimal rate of convergence of MALA, requires a careful inspection of the framework laid out in~\cite{roberts1998optimal}. It turns out that it is rather limited in several aspects. Perhaps most notably, it requires $\pi$ to be a product distribution, which excludes distributions with complex dependence structures that are now routinely encountered in high-dimensional statistics. Moreover, it applies only to potentials $V$ with higher-order derivatives; this is not a mere technical artefact since the limit acceptance probability of MALA as $d \to \infty$ involves the third derivative of $V$. Finally, the asymptotic nature of the scaling limit result only suggests dimension dependence in the asymptotic limit as $d\to \infty$, so it potentially washes away important effects that may arise for finite $d$. 

Thus it is natural to investigate the rate of convergence of MALA from a perspective that is now customary in the machine learning and optimization literature: by establishing non-asymptotic rates of convergence that hold uniformly over natural classes of target distributions which go beyond product distributions. 
We begin with the simplest and most natural setting and ask:

\begin{quote}
    What is the optimal dimension dependence of the mixing time of MALA uniformly over the class of $\alpha$-strongly convex and $\beta$-smooth potentials?
\end{quote}

Interestingly, and somewhat surprisingly, we show that while the rate $d^{1/3}$ originally established by~\cite{roberts1998optimal} is indeed optimal for some product distributions such as the standard Gaussian, it is not optimal uniformly over the class of smooth and strongly convex potentials of interest in this work. In fact, for any choice of $d$, we exhibit a product distribution with infinitely differentiable potential on which MALA requires a stepsize much smaller than $d^{-1/3}$, thus resulting in a worse mixing time. This construction confirms the limitations of the scaling limit approach to establishing optimal dimension dependence.

\medskip{}

\noindent{\bf Related work.}
The non-asymptotic performance of sampling algorithms uniformly over the class of smooth and strongly convex potentials has been the object of intense research activity recently. For example, \cite{dwivedi2019log,chenetal2020hmc} show that on this class of potentials, RWM can draw samples with at most $\varepsilon$ error in chi-squared divergence with $O(d\log \frac 1 \varepsilon)$ steps, thereby providing a non-asymptotic affirmation of the scaling limit of~\cite{roberts1997weak}. However, far less is known about optimal rates for MALA. The current best result for MALA on the class of smooth and strongly convex potentials is the paper~\cite{chenetal2020hmc}, which proves a complexity of $O(d\log \frac 1 \varepsilon)$ steps to achieve $\varepsilon$ error in chi-squared divergence. They also raise the question of whether there is a gap between the complexities of RWH and MALA\@.


\cite{mangoubi2019nonconvex} took a direct aim at improving the dimension dependence of mixing time bounds for MALA. They succeeded in obtaining a bound of $O(d^{2/3})$ albeit at the cost of stringent hypotheses. More specifically, they assume bounds on the third and fourth derivatives of the potential $V$; when these bounds are $O(1)$ (which is true for the standard Gaussian) then their mixing time is $O(d^{2/3})$; see the discussion in~\cite{chenetal2020hmc}.

\medskip{}
\noindent{\bf Our contributions.} In this work, we show that the mixing time in chi-squared divergence for MALA on the class of smooth and strongly convex potentials with a warm start is $\widetilde \Theta(d^{1/2})$. Our result consists of two parts: an upper bound on the mixing time which improves to optimality prior results such as~\cite{dwivedi2019log, chenetal2020hmc}, as well as the construction of smooth and strongly convex potentials on which the mixing time of MALA is no better than $d^{1/2}$.

In addition to establishing the optimal dimension dependence for MALA, our result is also one of the strongest guarantees for sampling with a warm start to-date, irrespective of the algorithm. Indeed, the algorithms which achieve similar or better dimension dependence compared to our result are: the underdamped Langevin algorithm \citep[][ $O(d^{1/2})$]{chengetal2018underdamped}, the higher-order Langevin algorithm \citep[][$O(d^{1/2})$]{mou2020highorder}, the randomized midpoint discretization of underdamped Langevin \citep[][$O(d^{1/3})$]{shen2019randomized}, and Hamiltonian Monte Carlo \citep[][$O(d^{1/4})$]{mangoubivishnoi2018hmc}.
However, the dependence of these results on $1/\varepsilon$ is polynomial, whereas our dependence on $1/\varepsilon$ is polylogarithmic. Therefore, for a wide range of accuracy values which are inverse polynomial in the dimension (e.g., $\varepsilon = 1/d$), our result attains the best-known dependence on the dimension.

In order to prove our upper bound on the mixing time, we introduce new techniques based on the  characterization of the Metropolis filter as a projection of the Markov transition kernel in expected $L_1$ distance~\citep{billeradiaconis2001mhprojection}. Our techniques effectively reduce the problem of bounding the mixing time to controlling the discretization error between the continuous-time and discretized Langevin processes, which has been extensively studied in the sampling literature. We do not aim to give a comprehensive bibliography here, but we note that our discretization analysis is closest to the papers~\cite{dalalyan2012sparse, dalalyan2014theoretical}. In this way, our upper bound has the potential to connect the vast literature on discretization of SDEs with the more difficult analysis of Metropolised algorithms, although it is likely that further innovations are necessary before the study of the latter is completely reduced to the former.

\medskip{}
\noindent{\bf Notation.} We use the symbol $\bx$ to denote a $d$-dimensional vector, and the plain symbol $x$ to denote a scalar variable. We abuse notation by identifying measures with their densities (w.r.t.\ Lebesgue measure); thus, for instance, $\pi$ represents the stationary distribution (a measure), and the notation $\pi(\bx)$ refers to the corresponding density evaluated at $\bx$.

\section{Preliminaries}

\subsection{Assumptions}
\label{sec:assumptions}

We consider the problem of sampling from a distribution $\pi$ supported on $\R^d$. The density of the distribution is given by $\pi(\bx) \propto \exp(-V(\bx))$, and we refer to $V:\R^d \to \R$ as the \emph{potential}. Throughout the paper, we will assume that $V$ is twice continuously differentiable, $\alpha$-strongly convex, and $\beta$-smooth, meaning
\begin{align*}
    \alpha I_d \preceq \nabla^2 V(\bx) \preceq \beta I_d, \qquad \forall\, \bx \in \R^d.
\end{align*}
We assume that $\beta \ge 1 \ge \alpha$, and we denote by $\kappa := \beta/\alpha$ the \emph{condition number}.

For the sake of normalization, we assume that $V(\bs 0) = \min V = 0$, so that $\nabla V(\bs 0) = \bs 0$.

\subsection{Metropolis-Adjusted Langevin Algorithm (MALA)}
Before stating our main results, we give some background on MALA and tools for establishing convergence rates of Markov chains. 

Given a step size $h>0$, MALA produces a sequence ${(\bx_n)}_{n \ge 0}$ of random points in $\R^d$ as follows. First, MALA is initialized at $\bx_0\sim \mu_0$. Then, for $n \ge 0$, repeat the following two-step procedure:
   \begin{enumerate}
   \item Proposal step: sample $\by_{n+1}\sim Q(\bx_n, \cdot)$, where
   \begin{align*}
    Q(\bx,\cdot)
    &:= \frac{1}{{(4\uppi h)}^{d/2}} \exp\Bigl( - \frac{\norm{\,\cdot -\bx + h\nabla V(\bx)}^2}{4h} \Bigr).
\end{align*}
This proposal density corresponds to one step of the unadjusted Langevin algorithm.
\item Accept-reject step: set 
\[
\bx_{n+1} = \left\{
\begin{array}{ll}
\by_{n+1} & \text{with probability $A(\bx_n, \by_{n+1})$} \\
\bx_{n} & \text{with probability $1-A(\bx_n, \by_{n+1})$} \\
\end{array}
\right.
\]
where the acceptance probability is given by
\begin{align}\label{eq:accept_prob}
    A(\bx,\by)
    &:= 1 \wedge a(\bx, \by) \ , \qquad a(\bx, \by) := \frac{\pi(\by) Q(\by, \bx)}{\pi(\bx) Q(\bx,\by)}\,.
\end{align}
   \end{enumerate}

It is well-known that MALA outputs a sequence of random variables ${(\bx_n)}_{n \ge 0}$ that forms a reversible Markov chain with stationary distribution $\pi$ and Markov transition kernel given by
\begin{align}
\begin{aligned}\label{eq:metro_adjust}
    T(\bx, \by)&= [1-a(\bx)] \, \delta_{\bx}(\by) + Q(\bx, \by) A(\bx,\by),\\
    A(\bx) &=  \int Q(\bx, \by) A(\bx, \by) \, \D \by \geq 0.
\end{aligned}
\end{align}
For the rest of the paper, it is important to note that $A$, $Q$, etc.\ depend on the step size $h$.

There are many choices to measure proximity of the MALA output with the target distribution. In this work, we focus on the Total Variation distance ($\TV$), the Kullback-Leibler divergence ($\KL$), the chi-squared divergence ($\chi^2$), and the 2-Wasserstein distance ($W_2$). Given a measure of discrepancy  $\msf d$ between probability measures, 
we define the mixing time, with initial distribution $\mu_0$, as follows:
\begin{align*}
    \tau_{\rm mix}(\varepsilon, \mu_0; \msf d) := \inf\{ n \in \N \,:\, \mc \bx_0\sim \mu_0 , \ \msf d(\mu_n, \pi) \leq \varepsilon\} \,.
\end{align*}

Extensions to other discrepancies, such as the $p$-Wasserstein distance for $p \le 2$ or the Hellinger distance, are straightforward and omitted for brevity.

The mixing time of a Markov chain is governed by its spectral gap, which we now introduce.
To that end, recall that the \emph{Dirichlet form} associated with the MALA kernel $T$ is the quadratic form
\begin{align*}
    \mc E (f, g) = \E_\pi[f \,({\id} -T)g], \qquad f, g \in L^2(\pi),
\end{align*}
where $(Tg)(\bx) := \int g(\by)\, T(\bx, \D \by)$. The \emph{spectral gap} is defined as
\begin{align}\label{eq:lambda}\tag{$\lambda$}
    \lambda
    := \inf\Bigl\{\frac{\mc E(f, f)}{\Var f}\,:\, f\in L^2(\pi), \; \var f > 0\Bigr\} \,.
\end{align}
Since it is often difficult to control the spectral gap directly, it is also convenient to introduce the \emph{conductance}, defined as
\begin{align}\label{eq:C}\tag{$\msf C$}
    \msf C := \inf\Bigl\{\frac{\int_S T(\bx, S^\comp) \, \pi(\D \bx)}{\pi(S)} \,:\, S \subseteq \R^d, \; \pi(S) \le \frac{1}{2} \Bigr\} \,.
\end{align}
By Cheeger's inequality~\citep{lawlersokal1998cheeger}, it holds that
\begin{align}\label{eq:cheeger}
    \msf C^2
    &\lesssim \lambda
    \lesssim \msf C.
\end{align}

\section{The Gaussian case}

As our work is motivated by the diffusion scaling limit of~\cite{roberts1998optimal}, which predicts a $d^{1/3}$ mixing time for MALA, it is natural to begin our investigations by asking whether this is indeed the correct order of the mixing time in the simplest possible setting: namely, when $\pi$ is the standard Gaussian distribution. Our first contribution is to establish that it is indeed the case even for finite $d$. We formulate here an informal result and postpone a more detailed statement together with a proof to Appendix~\ref{scn:gaussian}. Though it is expected, this result appears to be new.

\begin{theorem}[informal]
    If the target distribution $\pi$ is the standard Gaussian distribution, then the mixing time of MALA under a warm start is $\Theta(d^{1/3})$, and is achieved with step size $h \approx d^{-1/3}$.
\end{theorem}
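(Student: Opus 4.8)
The plan is to analyze MALA for $\pi = \mc N(0, I_d)$ directly, exploiting the fact that everything decomposes over coordinates and that the target is rotationally invariant. For the standard Gaussian, $V(\bx) = \norm{\bx}^2/2$, so $\nabla V(\bx) = \bx$, and one step of the Langevin proposal reads $\by = (1-h)\bx + \sqrt{2h}\,\bxi$ with $\bxi \sim \mc N(0, I_d)$. The acceptance ratio $a(\bx,\by)$ can be computed in closed form; after simplification it depends on $\bx$ and $\by$ only through low-dimensional summary statistics (inner products and squared norms). The key quantitative phenomenon is that $\log a(\bx, \by)$ is, up to lower-order terms, a sum of $d$ i.i.d.\ mean-zero contributions of size $\Theta(h^{3/2})$ each, so that $\log a$ concentrates around a bias of order $-d h^3$ with fluctuations of order $\sqrt d\,h^{3/2}$. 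Hence the average acceptance probability is bounded below by a constant precisely when $h \lesssim d^{-1/3}$, which is the source of the $d^{1/3}$ scaling.

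\textbf{Upper bound on the mixing time.} For $h \asymp d^{-1/3}$, I would first show that the expected acceptance probability from a typical point $\bx$ (i.e.\ $\bx$ in a high-probability region under $\pi$, where $\norm{\bx}^2 \approx d$) is at least a universal constant $c > 0$. Combined with the fact that the unadjusted proposal kernel already contracts toward $\pi$ at a geometric rate $1 - \Omega(h)$ in an appropriate sense (this is just the Gaussian–Ornstein–Uhlenbeck computation, since $1-h$ is the contraction factor), a standard conductance argument — going back to \cite{lovaszsimonovits1993random} and used in \cite{dwivedi2019log, chenetal2020hmc} — yields $\msf C \gtrsim \sqrt h \asymp d^{-1/6}$, and then Cheeger's inequality~\eqref{eq:cheeger} gives $\lambda \gtrsim h \asymp d^{-1/3}$. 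From a warm start this produces a mixing time of $\widetilde O(d^{1/3})$ in $\chi^2$-divergence, and hence in $\TV$, $\KL$, $W_2$. The care needed here is to handle the ``bad'' set where $\norm{\bx}$ is atypically large, where the acceptance probability degrades; this set has exponentially small mass under $\pi$ and can be absorbed into the conductance bound (or excised by a standard truncation argument).

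\textbf{Lower bound on the mixing time.} For the matching $\Omega(d^{1/3})$ lower bound, I would exhibit a bottleneck: take the step size $h$ to be of order larger than $d^{-1/3}$ and show that the average acceptance probability is then $o(1)$ (in fact super-polynomially small once $h \gg d^{-1/3}\log d$), so that with overwhelming probability MALA rejects for many consecutive steps and the chain is essentially frozen; this forces $\tau_{\rm mix} \gtrsim d^{1/3}$ uniformly over step sizes. A cleaner route, which I would prefer, is to directly bound the spectral gap: write the Dirichlet form~\eqref{eq:lambda} and test it against a linear function $f(\bx) = \langle \be_1, \bx\rangle$, which is the slowest-relaxing mode. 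For this test function $\mc E(f, f)$ can be evaluated using the one-step conditional law of $\bx_1$ given $\bx_0$, and one finds $\mc E(f,f)/\Var f \lesssim h + dh^3$ (the first term from the OU contraction of the proposal, the second from the rejection probability); optimizing over $h$ gives $\lambda \lesssim d^{-1/3}$, hence $\tau_{\rm mix} \gtrsim d^{1/3}$.

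\textbf{Main obstacle.} The routine parts are the Gaussian moment computations; the delicate part is making the heuristic ``$\log a \approx -dh^3 + \sqrt d\, h^{3/2}\, Z$'' rigorous with explicit, dimension-free constants — i.e.\ a concentration inequality for $\log a(\bx, \by)$ under the joint law $\bx \sim \pi$, $\by \sim Q(\bx, \cdot)$, with the correct dependence on $h$ and $d$ — and then transferring this from the stationary initialization to the conductance/bottleneck statements, which requires controlling the acceptance probability not just on average but uniformly over the relevant sets (and, for the lower bound, controlling it from a near-stationary but not exactly stationary state). I expect this concentration step, together with the bookkeeping of the atypical-norm region, to be where essentially all the work lies; everything else follows standard templates.
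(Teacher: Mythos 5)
Your overall plan coincides with the paper's: for the Gaussian the acceptance ratio is exactly $a(\bx,\by)=\exp(\frac h4\,(\norm{\bx}^2-\norm{\by}^2))$, and your heuristic that $\log a$ has bias $-\Theta(dh^3)$ with fluctuations $\Theta(\sqrt d\,h^{3/2})$ is precisely the computation the paper carries out; the upper bound then proceeds, as you propose, by showing $A(\bx)\ge 5/6$ pointwise on a set of $\pi$-measure $1-e^{-\Omega(d^{1/3})}$ (your ``bad set'' of atypical norm) and feeding this into the standard $s$-conductance machinery to get $\msf C_s\gtrsim\sqrt h$ and mixing time $\widetilde O(d^{1/3})$. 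Your first lower-bound route (vanishing acceptance for $h\gg d^{-1/3}$ implies exponentially small conductance) is also the paper's; one computational detail worth knowing is that the naive bound $A\le a$ does not make the Gaussian integral $\int Q\,a$ exhibit the $e^{-\Omega(dh^3)}$ decay, and the paper instead uses $A\le\sqrt a$.

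The genuine gap is in your \emph{preferred} lower-bound route. With the test function $f(\bx)=x_1$ one has $\mc E(f,f)=\tfrac12\E[(x_1-y_1)^2 A(\bx,\by)]\le\tfrac12\E[(x_1-y_1)^2]\lesssim h$, and that is all this computation gives: an upper bound of the form $h+dh^3$ on $\mc E(f,f)/\Var f$ cannot be right (rejection only \emph{decreases} the Dirichlet form, it cannot add a positive term), and even granting it, an upper bound that is increasing in $h$ is minimized at $h=0$, so ``optimizing over $h$'' does not produce $d^{-1/3}$. To rule out large step sizes you need a second bound that \emph{decreases} in $h$, and that is exactly the conductance/bottleneck estimate $\lambda\lesssim\msf C\lesssim\exp[-\Omega(dh^3)]$ from your first route. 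The correct conclusion is $\lambda\lesssim\min\{h,\exp[-\Omega(dh^3)]\}$ (or, if you want a single Dirichlet-form computation, something multiplicative like $\mc E(f,f)\lesssim h\,\sqrt{\E[A]}$ via Cauchy--Schwarz), whose maximum over $h$ is attained at $h\asymp d^{-1/3}$ and equals $\Theta(d^{-1/3})$. So you cannot discard the bottleneck argument in favor of the spectral test function; both are needed, and this is how the paper combines Theorem~\ref{thm:upper_bd_gap_separable} with the conductance upper bound.
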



    The proof of this result is based on explicit calculations. While limited to the Gaussian case, its inspection is instructive for potential extensions to other distributions.
    
    On the one hand, the upper bound on the mixing time relies on fine cancellations in the acceptance probability using the explicit form of the Gaussian distribution, which is unavailable for more general potentials. In general, it is difficult to control the acceptance probability directly, and this seems to be the main obstacle to sharpening the mixing time bound in~\cite{dwivedi2019log}. This observation motivates us to seek an indirect way of controlling the acceptance probability in the next section.
    
    On the other hand, while the Gaussian target distribution readily yields a lower bound over the class of potentials with smooth and strongly convex potentials, it turns out to be too loose to address the optimality of MALA. In Section~\ref{scn:lower_bd}, we show that a tighter lower bound may be achieved using a carefully chosen perturbation of the Gaussian distribution.

\section{Upper bound}\label{scn:upper_bd}

In order to prove an upper bound on the mixing time of MALA, we assume that we have access to a \emph{warm start}. This is a common assumption which has been employed in previous works on MALA, e.g.~\cite{dwivedi2019log, mangoubi2019nonconvex, chenetal2020hmc}.

\begin{definition}[warm start]
\label{def:warmstart}
We say that the initial distribution $\mu_0$ is \emph{$M_0$-warm} with respect to $\pi$ if for any Borel set $E \subseteq \R^d$, it holds that $\mu_0(E) \le M_0 \pi(E)$. When clear from the context, we simply say that an algorithm has a $M_0$-warm start to indicate that it is initialized at an  \emph{$M_0$-warm} distribution and omit reference to the target distribution.
\end{definition}

We now state our upper bound on the mixing time of MALA, which shows that under a warm start the mixing time of MALA is $\widetilde O(\sqrt d)$.

\begin{theorem}\label{thm:mala_upper_bd}
    Fix $\varepsilon > 0$ and consider a target distribution $\pi$ satisfying the assumptions of Section~\ref{sec:assumptions}. Then MALA with a $M_0$-warm start and step size 
    \begin{align*}
        h
        &= \frac{c \alpha^{1/2}}{\beta^{4/3} d^{1/2} \log(d\kappa M_0/\varepsilon)}
    \end{align*}
for a sufficiently small absolute constant $c > 0$,    has mixing time given by
    \begin{align*}
        \tau_{\rm mix}(\varepsilon, \mu_0; \msf d)
        &\lesssim \frac{\beta^{4/3} d^{1/2}}{\alpha^{3/2}} \log\Bigl(\frac{M_0}{\varepsilon}\Bigr) \log\Bigl( d\kappa + \frac{M_0}{\varepsilon}\Bigr)\,.
    \end{align*}
    for each of the distances
    \begin{align*}
        \msf d \in \{\TV, \; \sqrt{\KL}, \; \sqrt{\chi^2}, \; \sqrt\alpha \,W_2\}\,.
    \end{align*}
\end{theorem}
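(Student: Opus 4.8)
The plan is to lower bound the spectral gap $\lambda$ of the MALA chain and then invoke the standard warm-start mixing bound $\tau_{\rm mix}(\varepsilon, \mu_0; \sqrt{\chi^2}) \lesssim \lambda^{-1} \log(M_0/\varepsilon)$, which also dominates the mixing times for $\TV$, $\sqrt{\KL}$, and $\sqrt\alpha \, W_2$ (the last via the transport inequality $\alpha \, W_2^2(\mu, \pi) \le 2 \KL(\mu \| \pi) \le 2\chi^2(\mu \| \pi)$, valid since $\pi$ is $\alpha$-strongly log-concave). By Cheeger's inequality~\eqref{eq:cheeger} it is then enough to prove $\msf C \gtrsim \sqrt{\alpha h}$ for $h$ as in the statement; substituting this value of $h$ into $\lambda^{-1} \gtrsim (\alpha h)^{-1}$ gives exactly the claimed bound.

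To lower bound $\msf C$ I would run the usual conductance argument (as in~\citep{dwivedi2019log, chenetal2020hmc}): combine a one-step overlap bound with the isoperimetric inequality of the strongly log-concave target. Fix the good set $\Omega := \{\bx : \norm{\nabla V(\bx)}^2 \lesssim \beta d \, \log(d \kappa M_0 / \varepsilon)\}$. Since $\E_\pi \norm{\nabla V}^2 = \E_\pi[\Delta V] \le \beta d$ and $\norm{\nabla V}$ has sub-Gaussian fluctuations under $\pi$, one can arrange $\pi(\Omega^\comp) \le \varepsilon/M_0$, which forces the $M_0$-warm start to place essentially all of its mass on $\Omega$; this is where the logarithm, and hence the logarithmic factor in $1/h$, is spent. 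The two remaining ingredients are: (i) for $\bx, \bx' \in \Omega$ with $\norm{\bx - \bx'} \lesssim \sqrt h$ one has $\TV(T(\bx, \cdot), T(\bx', \cdot)) \le 1 - c_0$ for an absolute constant $c_0 > 0$; and (ii) the isoperimetric profile of $\pi$ at scale $\sqrt h$ is $\gtrsim \sqrt{\alpha h}$. Granting these, and handling the mass outside $\Omega$ by comparison with the chain restricted to $\Omega$, the localization argument delivers $\msf C \gtrsim \sqrt{\alpha h}$.

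The overlap bound (i) is where the new idea enters. By the triangle inequality and~\eqref{eq:metro_adjust},
\[
    \TV(T(\bx, \cdot), T(\bx', \cdot)) \le [1 - A(\bx)] + \TV(Q(\bx, \cdot), Q(\bx', \cdot)) + [1 - A(\bx')].
\]
The middle term is the total variation distance between two Gaussians with common covariance $2h I_d$ whose means differ by $(\bx - \bx') - h \, (\nabla V(\bx) - \nabla V(\bx'))$, of norm $\lesssim \norm{\bx - \bx'}$ (using $h\beta \le 1$), hence it is $\lesssim \norm{\bx - \bx'}/\sqrt h$, which is small once $\norm{\bx - \bx'} \lesssim \sqrt h$. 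So it remains to bound the rejection probability $1 - A(\bx)$ by a small constant on $\Omega$. Rather than estimating the acceptance ratio $a(\bx, \by)$ directly --- the delicate route of the Gaussian computation, and the bottleneck in~\cite{dwivedi2019log} --- I would use the projection picture of the Metropolis filter~\citep{billeradiaconis2001mhprojection}: the rejection probability measures the ``distance'' from $Q$ to reversibility, $1 - A(\bx) = \TV(T(\bx, \cdot), Q(\bx, \cdot))$, and averaged over $\pi$ it equals the irreversibility of the proposal,
\[
    \E_\pi[1 - A] = \TV\bigl(\mathrm{Law}(\bx, \by), \, \mathrm{Law}(\by, \bx)\bigr), \qquad \bx \sim \pi, \quad \by \mid \bx \sim Q(\bx, \cdot),
\]
a total variation between joint laws on $\R^d \times \R^d$. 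Since the transition kernel $P_h$ of the Langevin diffusion $\D \bx_t = -\nabla V(\bx_t) \, \D t + \sqrt 2 \, \D B_t$ is $\pi$-reversible, a short computation exploiting this reversibility upgrades the identity above to
\[
    \E_\pi[1 - A] \le 2 \, \E_\pi \bigl[ \TV(Q(\bx, \cdot), P_h(\bx, \cdot)) \bigr],
\]
which reduces the Metropolis adjustment entirely to a one-step discretization estimate for the Langevin SDE, bypassing the acceptance probability altogether. By Girsanov's theorem (freezing the drift of the diffusion at its starting point) followed by Pinsker's inequality --- the argument is closest to~\cite{dalalyan2012sparse, dalalyan2014theoretical} --- one obtains the pointwise bound $\TV(Q(\bx, \cdot), P_h(\bx, \cdot))^2 \lesssim \beta^2 \, (h^3 \norm{\nabla V(\bx)}^2 + h^2 d)$; on $\Omega$ this is $\lesssim \beta^2 h^2 d \, \log(d \kappa M_0 / \varepsilon)$ up to lower-order terms, so the stated choice of $h$ makes $1 - A(\bx) \le c_0$ there, closing the loop.

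Beyond the bookkeeping of running the conductance argument with a warm start --- in particular controlling the discrepancy between MALA and its restriction to $\Omega$, which is delicate because $Q$ is not itself reversible and the chain can reject --- the main obstacle is to make the discretization estimate sharp enough in its joint dependence on $(\alpha, \beta, d, h)$ to recover the exponents $\beta^{4/3} d^{1/2} / \alpha^{3/2}$, rather than the cruder ones produced by the Girsanov/Pinsker computation above; I expect this to require a more refined, stationary-regime control of $\E \norm{\bx_t - \bx_0}$ along the diffusion started from a $\pi$-typical point. That $\widetilde\Theta(d^{1/2})$ with these exponents is the true rate is corroborated by the matching lower bound of Section~\ref{scn:lower_bd}.
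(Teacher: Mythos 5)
Your architecture matches the paper's (conductance $+$ isoperimetry, with the Billera--Diaconis projection property used to reduce the Metropolis adjustment to a Girsanov/Pinsker discretization estimate for the Langevin diffusion), and your identification of $1-A(\bx)=\norm{T_{\bx}-Q_{\bx}}_{\rm TV}$ and of the averaged inequality $\E_\pi[1-A]\le 2\,\E_\pi\norm{Q_{\bx}-\bar Q_{\bx}}_{\rm TV}$ is exactly the paper's Lemma~\ref{lem:tv_projection}. But there is a genuine gap at the heart of your overlap bound (i): the projection property is an \emph{integrated} statement over $\bx\sim\pi$, and it does not upgrade to a pointwise bound $1-A(\bx)\lesssim\norm{Q_{\bx}-\bar Q_{\bx}}_{\rm TV}$ on your good set $\Omega$. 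Membership in $\Omega=\{\norm{\nabla V(\bx)}^2\lesssim\beta d\log(\cdot)\}$ controls the Girsanov discrepancy at $\bx$, but there is no implication from that to the rejection probability at $\bx$; the acceptance ratio depends on how $\pi Q$ compares to its transpose in a neighborhood of $\bx$, not on $\norm{\nabla V(\bx)}$ alone. The only thing your argument actually yields is $\E_\pi[1-A]=O(1/\log(\cdot))$ for the stated $h$, and Markov's inequality then gives a good event of probability $1-O(1/\log(\cdot))$. That is quantitatively far from what the warm-start conductance argument needs: with $s=\varepsilon/(2M_0)$, the bad set must have $\pi$-probability $O(s\sqrt h)$, since otherwise the term $\pi(E^\comp)$ swamps $\sqrt{\alpha h}\,\pi(S)$ for sets $S$ with $\pi(S)$ as small as $s$ (see the chain of inequalities culminating in~\eqref{eq:cond_arg}).

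This is precisely the difficulty the paper's proof is built to overcome, and the machinery for it is absent from your proposal. The paper proves a \emph{pointwise} projection inequality (Theorem~\ref{thm:pointwise_projection}), which carries an extra error term $\int \frac{\pi(\by)\bar Q(\by,\bx)}{\pi(\bx)}\,\bigl\lvert\frac{Q(\by,\bx)}{\bar Q(\by,\bx)}-1\bigr\rvert\,\D\by$ beyond $2\norm{Q_{\bx}-\bar Q_{\bx}}_{\rm TV}$, and then controls the $k$-th moments of that term for $k\sim\log\frac{1}{s\sqrt h}$ via Girsanov, It\^o's formula applied to $\exp H_t$, the Burkholder--Davis--Gundy inequality, and MGF bounds on $\sup_{t\le h}\norm{\bar\bX_t-\bx}^2$ (Lemmas~\ref{lem:moment_bdd}--\ref{lem:sup_xt_control}). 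Only this concentration inequality delivers a good event of probability $1-c_0 s\sqrt h$ on which $\norm{T_{\bx}-Q_{\bx}}_{\rm TV}\le 1/6$, which is what Proposition~\ref{prop:tv_concentration} feeds into the $s$-conductance argument. Your closing remark locates the remaining difficulty in sharpening the constants of the discretization estimate; in fact the exponents $\beta^{4/3}d^{1/2}/\alpha^{3/2}$ already come out of the crude Girsanov/Pinsker computation in expectation --- the real missing ingredient is the pointwise-projection-plus-moment-concentration step that converts the average control into control with failure probability polynomial in $\varepsilon/M_0$.
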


    The main properties of strongly log-concave distributions that we use in the proof are summarized in Lemma~\ref{lem:properties_of_strong_log_concave}. As long as $\pi$ satisfies these properties, the upper bound technique may be applied under weaker assumptions, e.g., a log-Sobolev inequality.
    We do not pursue these extensions further in this paper.
    
    We primarily work with the total variation distance to establish the above upper bound on the mixing time and translate this result to the chi-squared divergence by leveraging $M_0$-warmness of all the iterates of the MALA chain. In turn, this result extends to the KL divergence using a standard comparison inequality \citep[see, e.g.,][Chapter~2]{tsybakov2009nonparametric} and ultimately to the Wasserstein distance using Talagrand’s transportation inequality for strongly log-concave distributions.

    The bound above is likely not sharp in terms of the accuracy parameter $\varepsilon$ and the warm start parameter $M_0$. Indeed, we expect the dependency on the accuracy parameter to be $\log(1/\varepsilon)$, and the paper~\cite{chenetal2020hmc} develops a method, based on the conductance profile, to reduce the warm start dependence to $\log \log M_0$. Since the quantity $\log M_0$ can introduce additional dimensional factors under a feasible start~\citep{dwivedi2019log}, it is important to improve the dependency on $M_0$. We leave open the question of refining our techniques to achieve these improvements.


Since our upper bound proof may be of interest for analyzing other sampling algorithms based on Metropolis-Hastings filters, we now proceed to give a technical overview of the ideas involved in the upper bound. Throughout, we use the notation $Q_{\bx}(\cdot)$, $T_{\bx}(\cdot)$, etc.\ as a shorthand for the kernels $Q(\bx,\cdot)$, $T(\bx, \cdot)$, etc.

We begin by describing the approach of~\cite{dwivedi2019log}, which will serve as a reference. The standard technique for bounding the conductance of geometric random walks is the following lemma~\citep[see, e.g.,][Lemma 13]{lee2018convergence}.

\begin{lemma}\label{lem:standard_conductance_lem}
    Suppose that for all $\bx, \by \in \R^d$ with $\norm{\bx-\by} \le r$, it holds that $\norm{T_{\bx} - T_{\by}}_{\rm TV} \le 3/4$.
    Then, the conductance of the MALA chain satisfies $\msf C \gtrsim \sqrt{\alpha} r$.
\end{lemma}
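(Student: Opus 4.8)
The plan is to run the standard argument that upgrades a one-step total-variation overlap bound into a conductance bound via an isoperimetric inequality, following the template of~\cite{dwivedi2019log}. Fix a measurable set $S \subseteq \R^d$ with $\pi(S) \le 1/2$; the goal is to lower bound the escape flow $\Phi_S := \int_S T_{\bx}(S^\comp)\,\pi(\D\bx)$ by a constant multiple of $\sqrt\alpha\,r\,\pi(S)$. First I would decompose $\R^d$ into the ``deep interiors'' of $S$ and $S^\comp$ together with a boundary layer,
\begin{align*}
    S_1 &:= \{\bx \in S : T_{\bx}(S^\comp) < 1/8\}, \qquad
    S_2 := \{\bx \in S^\comp : T_{\bx}(S) < 1/8\}, \\
    S_3 &:= \R^d \setminus (S_1 \cup S_2),
\end{align*}
and observe that $S_1$ and $S_2$ are separated: for $\bx \in S_1$ and $\by \in S_2$ we have $T_{\bx}(S) > 7/8$ and $T_{\by}(S) < 1/8$, so $\norm{T_{\bx} - T_{\by}}_{\TV} \ge T_{\bx}(S) - T_{\by}(S) > 3/4$, and the contrapositive of the hypothesis forces $\norm{\bx-\by} > r$; hence $\operatorname{dist}(S_1, S_2) \ge r$.

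Next I would exploit reversibility of the MALA kernel. Detailed balance gives $\Phi_S = \int_{S^\comp} T_{\bx}(S)\,\pi(\D\bx)$, and since $T_{\bx}(S^\comp) \ge 1/8$ on $S \setminus S_1$ while $T_{\bx}(S) \ge 1/8$ on $S^\comp \setminus S_2$,
\begin{align*}
    2\,\Phi_S
    &= \int_S T_{\bx}(S^\comp)\,\pi(\D\bx) + \int_{S^\comp} T_{\bx}(S)\,\pi(\D\bx) \\
    &\ge \frac{1}{8}\,\bigl[\pi(S \setminus S_1) + \pi(S^\comp \setminus S_2)\bigr]
    = \frac{1}{8}\,\pi(S_3),
\end{align*}
so $\Phi_S \ge \pi(S_3)/16$, and it remains to bound $\pi(S_3)$ from below. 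Here I would split into two cases. If $\pi(S_3) \ge \pi(S)/2$, then $\Phi_S \ge \pi(S)/32$, which already yields the conclusion in the regime $\sqrt\alpha\,r \lesssim 1$ relevant to the applications. Otherwise $\pi(S_3) < \pi(S)/2$, so $\pi(S_1) = \pi(S) - \pi(S\setminus S_1) > \pi(S)/2$ and $\pi(S_2) = \pi(S^\comp) - \pi(S^\comp\setminus S_2) > \pi(S^\comp) - \pi(S)/2 \ge 1/4$; applying the isoperimetric inequality for $\alpha$-strongly log-concave measures to the partition $\{S_1, S_2, S_3\}$ — namely $\pi(S_3) \gtrsim \sqrt\alpha \cdot \operatorname{dist}(S_1,S_2) \cdot \pi(S_1)\,\pi(S_2)$, a standard consequence of strong log-concavity (see Lemma~\ref{lem:properties_of_strong_log_concave} and~\cite{dwivedi2019log}) — together with $\operatorname{dist}(S_1,S_2) \ge r$ and the two lower bounds just derived, gives $\pi(S_3) \gtrsim \sqrt\alpha\,r\,\pi(S)$ and hence $\Phi_S \gtrsim \sqrt\alpha\,r\,\pi(S)$. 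In either case $\Phi_S/\pi(S) \gtrsim \sqrt\alpha\,r$, which is the desired lower bound on the conductance.

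The only genuinely nontrivial input is the strong-log-concavity isoperimetric inequality invoked in the second case; the rest is elementary bookkeeping plus reversibility, so I do not anticipate a serious obstacle. The one point that requires attention is matching constants: the thresholds defining $S_1, S_2$ (here $1/8$) must be chosen small enough that two deep-interior points are forced to have total-variation distance strictly exceeding the hypothesis level $3/4$ — which is exactly why the threshold $1/8$, rather than a larger value such as $1/4$, is needed.
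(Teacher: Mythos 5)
Your proof is correct and follows exactly the standard good-set/bad-set decomposition plus reversibility plus isoperimetry template that the paper itself attributes to \citet[Lemma 13]{lee2018convergence} and reruns almost verbatim for the $s$-conductance in Appendix~\ref{scn:conductance_argument}; the only cosmetic difference is your threshold $1/8$ (chosen to force $\norm{T_{\bx}-T_{\by}}_{\rm TV}>3/4$ directly) versus the paper's $1/4$ in the $s$-conductance variant, where the slack is instead supplied by the $\norm{T_{\bx}-Q_{\bx}}_{\rm TV}\le 1/6$ bounds. Your explicit caveat that the first case only yields the claim when $\sqrt{\alpha}\,r\lesssim 1$ is the right reading of the implicit normalization in the lemma as stated.
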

In light of this lemma,~\cite{dwivedi2019log} considers the following decomposition:
\begin{align}\label{eq:TV_decomposition}
    \norm{T_{\bx} - T_{\by}}_{\rm TV}
    &\le \norm{T_{\bx} - Q_{\bx}}_{\rm TV} + \norm{Q_{\bx} - Q_{\by}}_{\rm TV} + \norm{T_{\by} - Q_{\by}}_{\rm TV}.
\end{align}
The middle term is the TV distance between two Gaussian distributions, and using Pinsker's inequality it is straightforward to show that
\begin{align*}
    \norm{Q_{\bx} - Q_{\by}}_{\rm TV}
    &\le \frac{\norm{\bx - \by}}{\sqrt{2h}}\,, \qquad\text{provided}~h \le \frac{2}{\beta}\,,
\end{align*}
see~\cite[Lemma 3]{dwivedi2019log}. On the other hand, bounding the first and third terms in the decomposition~\eqref{eq:TV_decomposition} requires carefully controlling the acceptance probability of MALA\@.
\cite{dwivedi2019log} show that these terms can be controlled when the step size is of order $h \approx 1/d$. An application of Lemma~\ref{lem:standard_conductance_lem} with $r \approx \sqrt h$ yields a conductance bound of $\msf C = \Omega(1/\sqrt d)$ and in turn, a spectral gap bound of $\lambda = \Omega(1/d)$  by Cheeger's inequality~\eqref{eq:cheeger}. Overall, this approach yields a mixing time bound is $O(d)$.

In order to prove a stronger mixing time bound of $\widetilde O(\sqrt d)$, we must consider much larger step sizes (of order $h \approx 1/\sqrt d$), and in this regime, controlling the acceptance probabilities by hand requires a daunting computational effort. In fact,  \cite{roberts1998optimal} already resort to a computer-aided proof to study the asymptotics of the acceptance probability. Our first main idea is to use the well-known fact~\citep{billeradiaconis2001mhprojection} that for any proposal $Q$, the corresponding Metropolis-adjusted kernel $T$ is the closest Markov kernel to $Q$, among all reversible Markov kernels with stationary distribution $\pi$.

\begin{lemma} \label{lem:tv_projection}
Let $Q$ be an atomless proposal kernel, and let $T$ be the kernel obtained from $Q$ by Metropolis adjustment (defined by~\eqref{eq:accept_prob} and~\eqref{eq:metro_adjust}). Let $\bar Q$ be any kernel that is reversible with respect to $\pi$ and has no atoms. Then, for $\bx \sim \pi$, it holds that
\begin{align*}
    \E \norm{T_{\bx} - Q_{\bx}}_{\rm TV}  
    &\le 2\E \norm{\bar Q_{\bx} - Q_{\bx}}_{\rm TV}\, .
    \end{align*}
\end{lemma}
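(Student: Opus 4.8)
The claim is a projection/minimality statement for the Metropolis filter in expected total variation distance. I would first recall the key structural fact about the Metropolis kernel: writing $T_\bx = [1 - A(\bx)]\,\delta_\bx + R_\bx$ where $R(\bx,\by) = Q(\bx,\by)\,A(\bx,\by)$ is the ``off-diagonal'' sub-probability kernel, reversibility of $T$ with respect to $\pi$ is equivalent to reversibility of $R$, i.e. $\pi(\bx)\,Q(\bx,\by)\,A(\bx,\by) = \pi(\by)\,Q(\by,\bx)\,A(\by,\bx)$, which is exactly how the acceptance ratio~\eqref{eq:accept_prob} is designed. Crucially, the Metropolis choice $A(\bx,\by) = 1 \wedge a(\bx,\by)$ makes $R$ the \emph{largest} sub-probability kernel dominated by $Q$ that is reversible with respect to $\pi$: for any other reversible (w.r.t.\ $\pi$) sub-kernel $\bar R$ with $\bar R(\bx,\cdot) \le Q(\bx,\cdot)$ pointwise, we have $\bar R(\bx,\by) \le R(\bx,\by)$ for ($\pi\otimes Q$)-a.e.\ $(\bx,\by)$. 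This is because reversibility forces $\bar R(\bx,\by) \le \pi(\by)\,Q(\by,\bx)/\pi(\bx)$, and combined with $\bar R(\bx,\by)\le Q(\bx,\by)$ this gives $\bar R(\bx,\by) \le Q(\bx,\by)\,(1 \wedge a(\bx,\by)) = R(\bx,\by)$. I would establish this maximality first, as the whole argument rests on it.

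**Reducing $\bar Q$ to a sub-kernel dominated by $Q$.** Given an arbitrary atomless $\pi$-reversible kernel $\bar Q$, it need not be dominated by $Q$, so the next step is to pass to $\bar R := \bar Q \wedge Q$ in the sense of the Lebesgue density, i.e.\ $\bar R(\bx,\by) := \min\{\bar Q(\bx,\by), Q(\bx,\by)\}$ (both are atomless, so this makes sense as densities). The point is twofold. First, $\E\norm{\bar R_\bx - Q_\bx}_{\rm TV} \le \E\norm{\bar Q_\bx - Q_\bx}_{\rm TV}$, since taking the pointwise minimum with $Q$ only decreases the $L^1$ distance to $Q$ on the region where $\bar Q > Q$ and leaves it unchanged elsewhere — more carefully, $\int (Q - \bar R)(\bx,\by)\,\D\by = \int (Q - \bar Q)_+(\bx,\by)\,\D\by \le \int |Q - \bar Q|(\bx,\by)\,\D\by$, and $\norm{Q_\bx - \bar R_\bx}_{\rm TV}$ equals this integral because $\bar R \le Q$. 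Second — and this is the step that needs a small argument — $\bar R$ remains reversible with respect to $\pi$, because $\pi(\bx)\,\bar R(\bx,\by) = \min\{\pi(\bx)\,\bar Q(\bx,\by),\, \pi(\bx)\,Q(\bx,\by)\}$, and by the reversibility of $\bar Q$ the first term is symmetric in $(\bx,\by)$... but the second term $\pi(\bx)\,Q(\bx,\by)$ is \emph{not} symmetric, so this naive computation fails. I will need to be more careful here: the correct move is probably to define $\bar R$ symmetrically in a way that preserves reversibility, e.g.\ via $\pi(\bx)\,\bar R(\bx,\by) := \min\{\pi(\bx)\,\bar Q(\bx,\by),\ \pi(\bx)\,Q(\bx,\by),\ \pi(\by)\,Q(\by,\bx)\}$, which is manifestly symmetric once we note $\pi(\bx)\,\bar Q(\bx,\by)$ is symmetric, and which still satisfies $\bar R(\bx,\cdot)\le Q(\bx,\cdot)$. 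The cost of throwing in the third term is exactly $\int (Q(\bx,\by) - Q(\by,\bx)\,\pi(\by)/\pi(\bx))_+ \,\D\by \le \norm{T_\bx - Q_\bx}_{\rm TV}$ in expectation, by the definition of the Metropolis rejection probability; I expect this is where the factor $2$ in the statement comes from.

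**Assembling the bound.** Once $\bar R$ is a $\pi$-reversible sub-kernel dominated by $Q$, maximality gives $\bar R(\bx,\by) \le R(\bx,\by) = Q(\bx,\by)\,A(\bx,\by)$ a.e., hence $\int(Q - R)(\bx,\by)\,\D\by \le \int (Q - \bar R)(\bx,\by)\,\D\by$ pointwise in $\bx$. But $\int (Q - R)(\bx,\cdot) = 1 - A(\bx) = \norm{T_\bx - Q_\bx}_{\rm TV}$ exactly (the diagonal mass of $T$ and the ``missing'' off-diagonal mass of $Q$ coincide, and $T_\bx - Q_\bx$ is supported on the atom at $\bx$ plus the region where $Q > R$). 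Chaining the three inequalities — $\norm{T_\bx - Q_\bx}_{\rm TV} \le \norm{\bar R_\bx - Q_\bx}_{\rm TV}$ (pointwise, from maximality), $\E\norm{\bar R_\bx - Q_\bx}_{\rm TV} \le \E\norm{\bar Q_\bx - Q_\bx}_{\rm TV} + \E\norm{T_\bx - Q_\bx}_{\rm TV}$ (the symmetrization cost), and rearranging — would give $\E\norm{T_\bx - Q_\bx}_{\rm TV} \le \E\norm{\bar Q_\bx - Q_\bx}_{\rm TV} + \E\norm{T_\bx - Q_\bx}_{\rm TV}$, which is vacuous; so I must instead keep $\E\norm{T_\bx - Q_\bx}_{\rm TV}$ on the left and absorb only a \emph{fraction} of it, or else bound the symmetrization cost more cleverly, perhaps by a different constant leading to $\E\norm{T_\bx - Q_\bx}_{\rm TV} \le 2\,\E\norm{\bar Q_\bx - Q_\bx}_{\rm TV}$ after collecting terms.

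**Main obstacle.** The delicate point, and the one I expect to consume most of the work, is the symmetrization step: turning the arbitrary $\pi$-reversible kernel $\bar Q$ into a sub-kernel of $Q$ without losing more than a constant multiple of $\E\norm{T_\bx - Q_\bx}_{\rm TV}$, and doing the bookkeeping so that the constant comes out as $2$ rather than something vacuous. The cleanest route is probably to invoke the known result of~\cite{billeradiaconis2001mhprojection} in the form: $T$ minimizes $\bar Q \mapsto \E_{\bx\sim\pi}\norm{\bar Q_\bx - Q_\bx}_{\rm TV}$ over $\pi$-reversible kernels, so that $\E\norm{T_\bx - Q_\bx}_{\rm TV} \le \E\norm{\bar Q_\bx - Q_\bx}_{\rm TV}$ directly — but the factor $2$ in the statement suggests the authors want a self-contained argument handling atomless $\bar Q$ that may not dominate $Q$, which is exactly the symmetrization subtlety above. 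I would structure the final proof around making that one reduction airtight and then citing maximality of the Metropolis filter for the rest.
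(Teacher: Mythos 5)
Your central idea --- that the off-diagonal part $R(\bx,\by)=Q(\bx,\by)A(\bx,\by)$ of the Metropolis kernel is the \emph{largest} $\pi$-reversible sub-kernel dominated by $Q$, and that the lemma should follow from this --- is the right structural fact, and it is essentially the content of the Billera--Diaconis projection theorem that the paper simply cites. But your execution has a genuine gap, which you yourself flag and do not close: your reduction of an arbitrary reversible $\bar Q$ to a sub-kernel dominated by $Q$ costs a term of size $\E\norm{T_{\bx}-Q_{\bx}}_{\rm TV}$, so the chained inequality reads $\E\norm{T_{\bx}-Q_{\bx}}_{\rm TV}\le \E\norm{\bar Q_{\bx}-Q_{\bx}}_{\rm TV}+\E\norm{T_{\bx}-Q_{\bx}}_{\rm TV}$, which is vacuous. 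You also misattribute the factor $2$: it is not a symmetrization cost. It is pure atom bookkeeping. By Proposition~\ref{prop:basics}, $\norm{T_{\bx}-Q_{\bx}}_{\rm TV}$ equals the \emph{full} off-diagonal $L^1$ deficit $\int_{\R^d\setminus\{\bx\}}\abs{T(\bx,\by)-Q(\bx,\by)}\,\D\by$ (the atom at $\bx$ doubles the usual factor $\tfrac12$), whereas for two atomless kernels the TV distance is only \emph{half} the $L^1$ distance between densities; translating the off-diagonal $L^1(\pi)$ comparison into TV therefore puts a factor $2$ on the right-hand side only.

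The fix is a direct symmetrization that bypasses the sub-kernel reduction entirely. Set $f(\bx,\by):=\pi(\bx)Q(\bx,\by)$ and $g(\bx,\by):=\pi(\bx)\bar Q(\bx,\by)$; reversibility of $\bar Q$ makes $g$ symmetric. Then, using $A(\bx,\by)=1\wedge\bigl(f(\by,\bx)/f(\bx,\by)\bigr)$ and Proposition~\ref{prop:basics},
\begin{align*}
\E\norm{T_{\bx}-Q_{\bx}}_{\rm TV}
=\iint \bigl(f(\bx,\by)-f(\by,\bx)\bigr)_+\,\D\bx\,\D\by
=\frac12\iint\abs{f(\bx,\by)-f(\by,\bx)}\,\D\bx\,\D\by,
\end{align*}
where the last step uses the symmetry of the integration domain. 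By the triangle inequality through the symmetric point $g(\bx,\by)=g(\by,\bx)$, this is at most $\iint\abs{f-g}\,\D\bx\,\D\by=2\,\E\norm{\bar Q_{\bx}-Q_{\bx}}_{\rm TV}$, the last equality using that both $Q_{\bx}$ and $\bar Q_{\bx}$ are atomless. This is exactly the inequality the paper imports from \cite{billeradiaconis2001mhprojection}. Your maximality observation is true, but it is not needed here; it is closer in spirit to what the pointwise refinement (Theorem~\ref{thm:pointwise_projection}) requires.
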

\begin{proof}
    See Appendix~\ref{scn:pointwise_projection_proof}.
\end{proof}
We apply this result by comparing the MALA kernel $T$ with the transition kernel $\bar Q$ of the continuous-time Langevin diffusion run for time $h$.
In other words, $\bar Q(\bx, \cdot)$ is the law of $\bar{\bX}_h$, where ${(\bar{\bX}_t)}_{t\ge 0}$ evolves according to the stochastic differential equation
\begin{align}\label{eq:langevin_sde}
    \D \bar{\bX}_t
    &= -\nabla V(\bar{\bX}_t) \, \D t + \sqrt 2 \, \D \bB_t, \qquad \bar{\bX}_0 = \bx,
\end{align}
and ${(\bB_t)}_{t\ge 0}$ is a standard Brownian motion. Using standard arguments from stochastic calculus (see~\eqref{eq:expectation_tv}), we show that  $\E \norm{\bar Q_{\bx} - Q_{\bx}}_{\rm TV}=O(h\sqrt d)$ (see~\eqref{eq:expectation_tv}). This suggests that we can take the step size to be $h \asymp 1/\sqrt d$. However, since the lemma only controls the first and third terms of the decomposition~\eqref{eq:TV_decomposition} in expectation, it is not enough to yield a good lower bound on the conductance via Lemma~\ref{lem:standard_conductance_lem}. To remedy this, we prove a new pointwise version of the projection characterization of Metropolis adjustment.

\begin{theorem}\label{thm:pointwise_projection}
    Let $Q$ be an atomless proposal kernel, and let $T$ be the kernel obtained from $Q$ by Metropolis adjustment (defined by~\eqref{eq:accept_prob} and~\eqref{eq:metro_adjust}). Let $\bar Q$ be any kernel that is reversible with respect to $\pi$ and has no atoms. Then, for every $\bx \in \R^d$,
    \begin{align}
    \begin{aligned}
        \norm{T_{\bx} - Q_{\bx}}_{\rm TV}
        &\le 2\,\norm{\bar Q_{\bx} - Q_{\bx}}_{\rm TV} + \int \frac{\pi(\by) \bar Q(\by, \bx)}{\pi(\bx)} \, \bigl\lvert \frac{Q(\by,\bx)}{\vphantom{\big|}\bar Q(\by,\bx)} - 1\bigr\rvert \, \D \by.
    \end{aligned}\label{eq:pointwise_TV}
    \end{align}
    Consequently, for any convex increasing function $\Phi : \R_+ \to \R_+$ and $\bx \sim \pi$, $\by \sim \bar Q(\bx, \cdot)$,
    \begin{align}\label{eq:cvx_fn_projection}
        \E\Phi(\norm{T_{\bx} - Q_{\bx}}_{\rm TV})
        &\le \frac{1}{2} \E \Phi(4 \, \norm{\bar Q_{\bx} - Q_{\bx}}_{\rm TV}) + \frac{1}{2} \E \Phi\bigl( 2\,\bigl\lvert \frac{Q(\bx,\by)}{\vphantom{\big|}\bar Q(\bx,\by)} - 1\bigr\rvert\bigr).
    \end{align}
\end{theorem}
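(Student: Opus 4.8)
The plan is to start from the explicit form of the Metropolis kernel in~\eqref{eq:metro_adjust} and compute $\norm{T_{\bx} - Q_{\bx}}_{\rm TV}$ directly. Since $T_{\bx} = [1-A(\bx)]\,\delta_{\bx} + Q(\bx,\cdot)\,A(\bx,\cdot)$ and $Q$ is atomless, the atom at $\bx$ of mass $1-A(\bx)$ is entirely ``new mass'' relative to $Q_{\bx}$, while on the absolutely continuous part the density changes from $Q(\bx,\by)$ to $Q(\bx,\by)\,A(\bx,\by)$, a decrease since $A \le 1$. Hence $\norm{T_{\bx}-Q_{\bx}}_{\rm TV} = \tfrac12\bigl((1-A(\bx)) + \int Q(\bx,\by)\,(1-A(\bx,\by))\,\D\by\bigr) = 1 - A(\bx) = \int Q(\bx,\by)\,(1 - A(\bx,\by))\,\D\by$, using that $A(\bx) = \int Q(\bx,\by)A(\bx,\by)\,\D\by$. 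So the whole problem reduces to bounding the expected rejection probability $\int Q(\bx,\by)\,(1-A(\bx,\by))\,\D\by$.

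Next I would exploit the fact that $1 - A(\bx,\by) = {(1 - a(\bx,\by))}_+ = \tfrac12\,\lvert 1 - a(\bx,\by)\rvert + \tfrac12\,(1 - a(\bx,\by))$, and that $\int Q(\bx,\by)\,(1 - a(\bx,\by))\,\D\by = 1 - \int Q(\by,\bx)\,\pi(\by)/\pi(\bx)\,\D\by$ by the definition $a(\bx,\by) = \pi(\by)Q(\by,\bx)/(\pi(\bx)Q(\bx,\by))$ and reversibility is \emph{not} assumed for $Q$ — but the point is that this signed term is controlled once we insert $\bar Q$. Concretely, I would write, for the reversible kernel $\bar Q$ (so $\pi(\bx)\bar Q(\bx,\by) = \pi(\by)\bar Q(\by,\bx)$),
\begin{align*}
    Q(\bx,\by)\,(1 - a(\bx,\by))
    &= Q(\bx,\by) - \frac{\pi(\by)Q(\by,\bx)}{\pi(\bx)}
    = \bigl(Q(\bx,\by) - \bar Q(\bx,\by)\bigr) + \frac{\pi(\by)}{\pi(\bx)}\bigl(\bar Q(\by,\bx) - Q(\by,\bx)\bigr),
\end{align*}
using $\bar Q(\bx,\by) = \pi(\by)\bar Q(\by,\bx)/\pi(\bx)$. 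Integrating the absolute value of the left side over $\by$, the first group on the right integrates (after taking $\lvert\cdot\rvert$) to $2\norm{Q_{\bx} - \bar Q_{\bx}}_{\rm TV}$, and the second group, after writing $\pi(\by)\bar Q(\by,\bx)/\pi(\bx)\cdot\lvert Q(\by,\bx)/\bar Q(\by,\bx) - 1\rvert$, is exactly the integral term in~\eqref{eq:pointwise_TV}. Combining with $\int Q(\bx,\by)(1-A(\bx,\by))\,\D\by \le \int Q(\bx,\by)\,\lvert 1 - a(\bx,\by)\rvert\,\D\by$ (since ${(1-a)}_+ \le \lvert 1-a\rvert$, discarding the helpful signed half rather than tracking it — this only costs a constant) gives the bound~\eqref{eq:pointwise_TV}, possibly after absorbing a factor; I would double-check whether one can keep the sharper constant $2$ by retaining the cancellation in the signed term, since the statement has $2\norm{\bar Q_{\bx}-Q_{\bx}}_{\rm TV}$ and not $4$.

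For the consequence~\eqref{eq:cvx_fn_projection}, I would apply $\Phi$ (convex, increasing) to~\eqref{eq:pointwise_TV}, split $a + b \le \tfrac12(2a) + \tfrac12(2b)$ inside $\Phi$ by convexity, take $\E$ over $\bx\sim\pi$, and recognize the second term: since $\pi(\bx)\bar Q(\bx,\by) = \pi(\by)\bar Q(\by,\bx)$, the measure $\pi(\D\bx)\,\bar Q(\bx,\D\by)$ is symmetric in $(\bx,\by)$, so $\E_{\bx\sim\pi,\,\by\sim\bar Q_\bx}\Phi(2\,\lvert Q(\by,\bx)/\bar Q(\by,\bx) - 1\rvert) = \E_{\bx\sim\pi,\,\by\sim\bar Q_\bx}\Phi(2\,\lvert Q(\bx,\by)/\bar Q(\bx,\by) - 1\rvert)$, which converts the integral term against $\pi(\by)\bar Q(\by,\bx)$ into the stated expectation. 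The first term needs $\E_{\bx\sim\pi}\Phi(4\norm{\bar Q_\bx - Q_\bx}_{\rm TV})$; here the factor $4 = 2\cdot 2$ comes from the $2\norm{\bar Q_\bx - Q_\bx}_{\rm TV}$ in~\eqref{eq:pointwise_TV} doubled by the convexity split. I expect the main obstacle to be bookkeeping the constants and the sign of the $(1-a)$ term carefully enough to land exactly on the constant $2$ in~\eqref{eq:pointwise_TV} rather than something weaker; the measure-theoretic symmetrization and the convexity manipulation are routine once the pointwise identity $\norm{T_\bx - Q_\bx}_{\rm TV} = \int Q(\bx,\by)(1-A(\bx,\by))\,\D\by$ is in hand.
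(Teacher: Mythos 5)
Your proof is correct and follows essentially the same route as the paper's: reduce $\norm{T_{\bx} - Q_{\bx}}_{\rm TV}$ to the expected rejection probability, bound $(1-a)_+ \le \abs{1-a}$, insert $\bar Q$ via the triangle inequality and reversibility of $\bar Q$, and then combine the convexity split, Jensen's inequality for the inner integral against the probability measure $\pi(\by)\bar Q(\by,\bx)/\pi(\bx)\,\D\by$, and the symmetry of $\pi(\D\bx)\,\bar Q(\bx,\D\by)$ for the second inequality. The constant $2$ you were worried about comes out exactly as stated with no further loss, since the first group integrates to $\int \abs{Q(\bx,\by)-\bar Q(\bx,\by)}\,\D\by = 2\,\norm{Q_{\bx}-\bar Q_{\bx}}_{\rm TV}$.
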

\begin{proof}
    See Appendix~\ref{scn:pointwise_projection_proof}.
\end{proof}

\begin{remark}
    If we take the expectation of~\eqref{eq:pointwise_TV} when $\bx \sim \pi$, we obtain
    \begin{align*}
        \E \norm{T_{\bx} - Q_{\bx}}_{\rm TV}  
        &\le 4\E \norm{\bar Q_{\bx} - Q_{\bx}}_{\rm TV}\, ,
    \end{align*}
    which qualitatively recovers Lemma~\ref{lem:tv_projection}.
\end{remark}

The second inequality in Theorem~\ref{thm:pointwise_projection} can be used in the usual way to deduce concentration bounds for $\norm{T_{\bx} - Q_{\bx}}_{\rm TV}$ when $\bx \sim \pi$. A key feature of this approach is that both terms on the right-hand side of~\eqref{eq:cvx_fn_projection}, in the case of MALA, involve only quantities which measure the discrepancy between the continuous-time Langevin kernel $\bar Q$ and the discretized Langevin proposal $Q$. Therefore, to control the quantity $\norm{T_{\bx} - Q_{\bx}}_{\rm TV}$, it suffices to apply well-established techniques for studying the discretization of SDEs.

Once we show that $\norm{T_{\bx} - Q_{\bx}}_{\rm TV}$ is controlled with high probability, we are then able to apply a conductance argument, similar to Lemma~\ref{lem:standard_conductance_lem}, in order to prove our mixing time bound. We give an in-depth overview of the proof and provide proofs of technical details in Appendix~\ref{scn:upper_bd_proof}.

\section{Lower bound}\label{scn:lower_bd}


It is a standard fact that the mixing time is governed by the inverse of the spectral gap\footnote{By definition, the spectral gap corresponds to the smallest eigenvalue of the Dirichlet form. Hence, for an initial distribution $\mu_0$ that is correlated with the  eigenfunction corresponding to $\lambda$, it follows that $\tau_{\rm mix}(\varepsilon, \mu_0; \sqrt{\chi^2}) = \widetilde{\Omega}(\lambda^{-1})$. See, e.g.,~\cite[Chapter 4]{bakrygentilledoux2014} for a rigorous treatment of spectral theory.}.
Hence, an upper bound on the spectral gap $\lambda$ yields a lower bound on the mixing time.
In addition, we know from Cheeger inequality~\eqref{eq:cheeger} that $\lambda \lesssim \msf C$, where $\msf C$ denotes the conductance of the Markov chain. For these reasons, we identify a lower bound on the mixing time with an upper bound on either the conductance $\msf C$ or the spectral gap $\lambda$. 

To complement our upper bound on the mixing time of MALA, we provide a nearly matching lower bound, thereby settling the question of the dimension dependence of MALA for log-smooth and strongly log-concave targets. To that end, we exhibit a target distribution (in fact a family of distributions) such that the MALA chain with step size $h$ has exponentially small conductance whenever  $h \gg d^{-1/2}$. More precisely, fix $\eta \in (0,1/4)$ and  
define the adversarial target distribution $\pi_\eta$ as a product distribution with potential $V_\eta$ defined by
\begin{align}\label{eq:V_a}
    V_\eta(\bx)
    &= \frac{\|\bx\|^2}{2} - \frac{1}{2d^{2\ee}} \sum_{i=1}^d \cos(d^\ee x_i)
\end{align}
It is not hard to see that $V_\ee$ is $1/2$-strongly convex and $3/2$-smooth. To motivate this choice, recall from \citet[Theorem~1]{roberts1998optimal} that the acceptance probability of MALA  tends to a positive constant as $d\to\infty$ whenever the second moment of the third derivative of the potential is finite and the step size is chosen as $h=\Theta(d^{-1/3})$. The choice $V_\ee$ in~\eqref{eq:V_a} is an example of a smooth and strongly convex potential where this condition is violated asymptotically, therefore suggesting that $h=\Theta(d^{-1/3})$ is too large to prevent the acceptance probability to vanish for large $d$. Our first result below indicates that $h$ should be taken significantly smaller than $d^{-1/3}$; in fact nearly as small as $d^{-1/2}$ when $\eta \approx 1/4$.

In the following theorem, we set $\eta=1/4-\delta$, for some small $\delta>0$.

\begin{theorem}\label{thm:mala_lower_bd_main}
Fix $\delta \in (0, 1/18)$, let $\eta=1/4-\delta$, and let $\msf C$ denote the conductance of the MALA chain with target distribution $\pi_\eta$ and  step size $h$. Then, 
$\msf C \lesssim \exp[-\Omega(d^{4\delta})]$ for any $h \in [  d^{-\frac1 2 + 3\delta}, d^{-\frac{1}{3}}]$. 
 \end{theorem}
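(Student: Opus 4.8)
The plan is to show that, started from stationarity, the MALA chain on $\pi_\eta$ almost never moves, which forces exponentially small conductance. Since $V_\eta$ is even, $\pi_\eta$ is symmetric about the origin, so $S := \{\bx : x_1 \ge 0\}$ satisfies $\pi_\eta(S) = 1/2$; as rejected proposals stay in $S$, writing $A(\bx,\by) = 1 \wedge a(\bx,\by)$ and $A(\bx) = \int Q(\bx,\by)A(\bx,\by)\,\D\by$ we get
\[
\msf C \;\le\; \frac{1}{\pi_\eta(S)}\int_S \int_{S^\comp} Q(\bx,\by)\,A(\bx,\by)\,\D\by\,\pi_\eta(\D\bx) \;\le\; 2\,\E_{\pi_\eta}[A(\bx)] \;=\; 2\,\E\bigl[1 \wedge a(\bx,\by)\bigr],
\]
the last expectation over $\bx \sim \pi_\eta$, $\by \sim Q(\bx,\cdot)$. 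Because $\pi_\eta$ is a product and $Q$ factorizes, $a(\bx,\by) = \prod_{i=1}^d a_i(x_i,y_i)$, where $a_i$ is the one-dimensional MALA acceptance ratio for the coordinate marginal $\pi_{\eta,1}(x) \propto \exp(-x^2/2 + \tfrac{1}{2d^{2\eta}}\cos(d^\eta x))$, and the pairs $(x_i,y_i)$ are i.i.d. Setting $Z_i := \log a_i(x_i,y_i)$, it suffices to prove that $\sum_{i=1}^d Z_i \le -\Omega(d^{4\delta})$ with probability $1 - \exp[-\Omega(d^{4\delta})]$.

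The crux is the one-dimensional mean estimate $\E[Z_1] \asymp -h/d^{2\eta}$. Write $u = y-x$ and $W(x) = -\tfrac{1}{2d^{2\eta}}\cos(d^\eta x)$, so that coordinatewise $V_{\eta,1}(x) = x^2/2 + W(x)$. Expanding the Gaussian densities in $a_1$ and simplifying yields the exact identity
\[
Z_1 = \Bigl[W(x) - W(y) + \tfrac{u}{2}\bigl(W'(x) + W'(y)\bigr)\Bigr] - \tfrac{h}{4}(y^2 - x^2) - \tfrac{h}{2}\bigl(yW'(y) - xW'(x)\bigr) - \tfrac{h}{4}\bigl(W'(y)^2 - W'(x)^2\bigr),
\]
in which the purely quadratic contributions to the acceptance ratio have cancelled. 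I would estimate the expectation of each term separately. Since $\pi_{\eta,1}$ is a sub-Gaussian, $O(d^{-2\eta})$-perturbation of $N(0,1)$, every oscillatory integral that does not resonate with the perturbation frequency $d^\eta$ (in particular $\E[W(y)]$, $\E[yW'(y)]$, $\E[W'(y)^2]$, $\E[xW'(x)]$) is super-polynomially small; the term $\E[-\tfrac h4(y^2-x^2)]$ is the usual Gaussian contribution, of order $h^3$; and the dominant term is the ``wiggle-drift mismatch'' $\tfrac12\E[u\,W'(x)] = -\tfrac h2\,\E[V_{\eta,1}'(x)W'(x)] = -\tfrac h2\,\E[W'(x)^2] + (\text{super-small}) = -\Theta(h/d^{2\eta})$, which is strictly negative because the ULA proposal uses the instantaneous oscillating drift $-hW'(x)$, whose covariance with $W'(x)$ is $-h\,\Var W'(x)<0$. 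As $h \gg d^{-2\eta}$ over the whole admissible range (so $h^2 d^{2\eta}\ll 1$), this beats the $h^3$ term, giving $\E[\sum_i Z_i] \asymp -d\,h/d^{2\eta} = -\Theta(h\,d^{1-2\eta}) = -\Theta(h\,d^{1/2+2\delta}) \le -\Omega(d^{5\delta})$ whenever $h \ge d^{-1/2+3\delta}$.

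For concentration I would use $|W|, |W'| = O(d^{-\eta})$, sub-Gaussian tails for $u = -hV_{\eta,1}'(x)+\sqrt{2h}\,\xi$, and sub-exponential tails for $y^2-x^2$ to show that $\Var(Z_1) = O(h/d^{2\eta})$ and that the sub-exponential (Orlicz-$1$) norm of $Z_1$ is $O(\sqrt h/d^\eta) = o(1)$. Bernstein's inequality for i.i.d.\ sub-exponential variables then yields
\[
\Pr\Bigl[\,\sum\nolimits_i Z_i \ \ge\ \tfrac12\,\E\sum\nolimits_i Z_i\,\Bigr] \;\le\; \exp\bigl[-\Omega(h\,d^{1-2\eta})\bigr] \;\le\; \exp[-\Omega(d^{4\delta})],
\]
and on the complementary event $a(\bx,\by) = \exp(\sum_i Z_i) \le \exp(\tfrac12\E\sum_i Z_i) \le \exp[-\Omega(d^{4\delta})]$. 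Splitting $\E[1\wedge a(\bx,\by)]$ according to this event gives $\E[1\wedge a(\bx,\by)] \le \exp[-\Omega(d^{4\delta})]$, hence $\msf C \le 2\,\E[1\wedge a] \le \exp[-\Omega(d^{4\delta})]$. The hypothesis $\delta \in (0,1/18)$ is used only to guarantee that the step-size window $[d^{-1/2+3\delta}, d^{-1/3}]$ is nonempty.

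I expect the main obstacle to be the mean estimate $\E[Z_1] \asymp -h/d^{2\eta}$. The tempting move is to Taylor-expand $Z_1$ in $u$, which surfaces a $\tfrac{u^3}{12}V_{\eta,1}'''$-type term of order $h^3 d^\eta$ --- exactly the quantity tracked by the diffusion scaling limit of \cite{roberts1998optimal} --- but this term is mean-zero to leading order and only feeds the variance; the genuine negative drift lies elsewhere, so the identity must be kept exact. One must then carefully distinguish the $\Theta(d^{-2\eta})$ oscillatory integrals that resonate with the perturbation in $\pi_{\eta,1}$ from the super-polynomially small ones, and verify that no cancellation removes the wiggle-drift term. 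A secondary point is ensuring that $\Var(Z_1)$ is also only $O(h/d^{2\eta})$, so that the small per-coordinate negative drift survives after summation over all $d$ coordinates.
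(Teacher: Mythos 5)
Your proposal is correct in outline and would prove the theorem, but it takes a genuinely different route from the paper. The paper bounds $\msf C$ via $\sup_{\bx\in E}\int Q(\bx,\by)A(\bx,\by)\,\D\by \le \sup_{\bx\in E}\E_{\by}[a(\bx,\by)]$, completes the square in the Gaussian part of the potential, and evaluates the resulting expectation over $\by$ by a second-order Taylor expansion of the exponential; there the dominant negative contribution is $\E_\pi[\ft(\bx)]=-\tfrac18 d^{1-4\eta}$ (coming from the resonance $\E_{\pi_1}\cos(d^\eta x)\approx\tfrac14 d^{-2\eta}$), and the $\Theta(h d^{1-2\eta})$ contributions appear only through a delicate near-cancellation of two second-order terms. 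You instead bound $\msf C\le 2\,\E[1\wedge a]$, exploit the product structure to write $\log a=\sum_i Z_i$ with i.i.d.\ summands, and concentrate this sum around its mean $-\Theta(h d^{1-2\eta})$, where the negative drift is the covariance of the oscillatory proposal drift with $W'$. The two computations are consistent ($\log\E_\by[a]$ exceeds $\E\log a$ by roughly half the conditional variance of $\log a$, which is exactly of order $hd^{1-2\eta}$), and your exponent $d^{5\delta}$ is in fact stronger than the paper's $d^{4\delta}$ in the stated range of $h$; what your route buys is avoiding the exact evaluation of a Gaussian integral against the perturbation and replacing it with a one-dimensional mean/variance computation plus Bernstein, at the price of having to keep the identity for $Z_1$ exact and to track which oscillatory integrals resonate.

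One imprecision worth fixing: you list $\E[W'(y)^2]$ among the super-polynomially small oscillatory integrals. It is not: $W'(y)^2=\tfrac{1}{8d^{2\eta}}(1-\cos(2d^\eta y))$ has a non-oscillatory part of order $d^{-2\eta}$, so $\tfrac h4\E[W'(y)^2]$ is $\Theta(h d^{-2\eta})$, the same order as your dominant term. Your argument survives only because this term enters the exact identity as the difference $-\tfrac h4(W'(y)^2-W'(x)^2)$, and $\E[W'(y)^2]-\E[W'(x)^2]$ is genuinely negligible (both constants equal $\tfrac{1}{8d^{2\eta}}$ up to $O(d^{-6\eta})$ and super-small corrections). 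This cancellation should be stated explicitly, since an error here would shift the leading coefficient of $\E[Z_1]$.
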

Note that as $\delta\searrow 0$, the above theorem shows that MALA must take step sizes which are (essentially) at most of order $d^{-1/2}$.

The next result shows that the spectral gap of MALA is no better than $h$. Together with our upper bound, it implies in particular that the choice $h \approx d^{-1/2}$ is the optimal step size for MALA for a target distribution $\pi_\ee$ and hence, cannot be improved uniformly over the class of distributions with smooth and strongly convex potentials.

\begin{theorem}\label{thm:spectral_gap_upper_bd}
    The spectral gap $\lambda$ of MALA with target distribution $\pi_\ee$ and step size $0< h \le 1$ satisfies  $\lambda \lesssim h $.
\end{theorem}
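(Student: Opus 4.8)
\textbf{Proof proposal for Theorem~\ref{thm:spectral_gap_upper_bd}.}

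The plan is to exhibit an explicit test function $f$ in the Rayleigh quotient $\mc E(f,f)/\Var f$ and show that this quotient is $O(h)$. Since $\pi_\ee$ is a product distribution, the natural candidate is a function that depends on a single coordinate, say $f(\bx) = g(x_1)$ for a suitable scalar function $g$; then both $\Var_{\pi_\ee} f$ and $\mc E(f,f)$ factor through the one-dimensional marginal and the one-dimensional MALA dynamics, so I would reduce the whole computation to $d=1$ with the potential $v_\ee(x) = x^2/2 - \tfrac{1}{2}d^{-2\ee}\cos(d^\ee x)$. The simplest choice is $g(x) = x$ (or $g = x - \E_{\pi_\ee} x = x$ by symmetry), for which $\Var_{\pi_\ee} f = \Theta(1)$ since $v_\ee$ is a bounded perturbation of the standard Gaussian.

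The heart of the argument is then the upper bound $\mc E(g,g) = \E_{\pi_\ee}[g\,(\id - T)g] \lesssim h$ in one dimension. Writing the Dirichlet form in its symmetrized form,
\begin{align*}
    \mc E(g,g) = \frac12 \iint {(g(x) - g(y))}^2 \, T(x, \D y)\,\pi_\ee(\D x),
\end{align*}
and splitting $T(x,\cdot) = [1 - A(x)]\,\delta_x + Q(x,\cdot)\,A(x,\cdot)$, the atom at $x$ contributes nothing, so
\begin{align*}
    \mc E(g,g) = \frac12 \iint {(x-y)}^2\, A(x,y)\, Q(x,\D y)\, \pi_\ee(\D x) \le \frac12 \iint {(x-y)}^2\, Q(x,\D y)\,\pi_\ee(\D x).
\end{align*}
Under $Q(x,\cdot)$ the proposal $y$ is Gaussian with mean $x - h\,v_\ee'(x)$ and variance $2h$, so $\E_{Q(x,\cdot)}{(x-y)}^2 = 2h + h^2\,{v_\ee'(x)}^2$. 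Since $v_\ee'(x) = x + \tfrac12 d^{-\ee}\sin(d^\ee x)$ and $\E_{\pi_\ee} x^2 = O(1)$, integrating in $x$ gives $\mc E(g,g) \le h + O(h^2) \lesssim h$ (using $h \le 1$). Combining with $\Var f = \Theta(1)$ yields $\lambda \le \mc E(f,f)/\Var f \lesssim h$.

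The only subtlety — and the step I would be most careful with — is verifying that the test function $f(\bx)=x_1$ genuinely has variance bounded below by an absolute constant uniformly in $\ee$ and $d$; this follows because $\|v_\ee - \tfrac{\|\cdot\|^2}{2}\|_\infty \le d^{-2\ee} \le 1$ is a bounded perturbation, so $\pi_\ee$'s one-dimensional marginal is comparable (up to constant factors in density) to $\mc N(0,1)$, giving $\Var_{\pi_\ee}(x_1) \asymp 1$. One should also confirm that the product structure of $\pi_\ee$ correctly decouples the MALA kernel (it does: each coordinate of the proposal and of the accept/reject step are handled jointly, but for a single-coordinate test function the cross terms integrate out cleanly because the acceptance ratio $a(\bx,\by)$ factorizes over coordinates). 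No part of this requires controlling the acceptance probability from below — we only ever upper bound $A(x,y) \le 1$ — so the argument is robust and short.
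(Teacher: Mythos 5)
Your proof is correct and follows essentially the same route as the paper's own argument (Theorem~\ref{thm:upper_bd_gap_separable}): take the test function $f(\bx)=x_1$, note $\Var_\pi f \asymp 1$ for the perturbed Gaussian marginal, and bound the Dirichlet form by the proposal's mean squared displacement after discarding the acceptance probability via $A(\bx,\by)\le 1$. One minor imprecision: the phrase ``reduce the whole computation to $d=1$ with the one-dimensional MALA dynamics'' is not literally accurate, since the accept/reject step is a single global coin flip and the first-coordinate marginal of the $d$-dimensional chain is not the one-dimensional MALA chain---but this is harmless here because you only ever use $A\le 1$, after which the bound involves only the proposal kernel $Q$, which genuinely is a product.
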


We give the proofs of these theorems in Appendix~\ref{scn:lower_bd_proof}.


\section{Conclusion}

By establishing the sharp dimension dependence of MALA for smooth and strongly convex potentials, our work parallels well-known trends in optimization~\citep{bubeck2015convex, nesterov2018lectures} and high-dimensional statistics~\citep{tsybakov2009nonparametric, wainwright2019statistics} which seek to characterize the complexity of various learning tasks uniformly over a given function class. It is an interesting open question to extend our results on MALA to other  natural function classes, such as smooth and weakly convex potentials, as well as to other sampling algorithms.

To conclude, we list some specific directions that require further investigations.

\medskip

    
\noindent \textbf{Improved dependence on accuracy and warmness}. A notable weakness of our mixing time bound (Theorem~\ref{thm:mala_upper_bd}) is the dependence on the accuracy parameter and especially the warm start parameter, which are likely artefacts of our analysis. However, we note that in the regime where the step size is as large as $d^{-1/2}$, the conductance profile method of~\cite{chenetal2020hmc} is not enough to remove the effects of a feasible start. Overcoming this challenge may require new tools for controlling the mixing time of a Markov chain.

\medskip{}
\noindent \textbf{Analysis of other Metropolis-Hastings chains}. An interesting feature of Theorem~\ref{thm:mala_upper_bd} is that the majority of the computations involve controlling the discretization error between the continuous-time and discretized Langevin processes, leading to the hope that the vast literature on discretization of SDEs can be leveraged to obtain mixing time bounds for the corresponding Metropolis-Hastings chains. However, a critical component of this program is the choice of a reversible Markov diffusion to which the MALA kernel can be compared via the projection property (Theorem~\ref{thm:pointwise_projection}). As an example, consider the following two settings:
    \begin{enumerate}
        \item Under higher-order smoothness, the diffusion scaling limit of~\cite{roberts1998optimal} suggests that the mixing time of MALA should scale as $d^{1/3}$, using step size $h \approx d^{-1/3}$. Indeed, our computations in Appendix~\ref{scn:gaussian} confirm this prediction for a Gaussian target distribution. However, in this regime, the discretized Langevin proposal is too far from the continuous-time Langevin diffusion for our upper bound strategy to succeed. Thus, in this example, the natural choice of reversible Markov diffusion fails to yield the correct mixing time for MALA.
        \item The underdamped Langevin SDE~\citep{chengetal2018underdamped} is an example of a Markov diffusion which is not reversible. We can consider adding a Metropolis adjustment after a proposal which consists of one step of the discretized underdamped Langevin process. It is not clear that our techniques apply to this example because there does not appear to be a natural \emph{reversible} Markov diffusion with which to compare the resulting Metropolis-adjusted kernel.
    \end{enumerate}
    Despite these obstacles, we believe that there is a wide variety of applications to which our upper bound technique applies, which we leave for future research.


\acks{Sinho Chewi was supported by the Department of Defense (DoD) through the National Defense Science \& Engineering Graduate Fellowship (NDSEG) Program. Kwangjun Ahn was supported by graduate assistantship from the NSF Grant (CAREER: 1846088)
and by the Kwanjeong Educational Foundation. Xiang Cheng was supported by NSF award IIS-1741341. Thibaut Le Gouic was supported by NSF award IIS-1838071. Philippe Rigollet was supported by NSF awards IIS-1838071, DMS-1712596,  and DMS-2022448.
}


\appendix

\section{Proof of the upper bound}\label{scn:upper_bd_proof}
This section presents the proof of Theorem~\ref{thm:mala_upper_bd}.

\subsection{High-level overview of the proof}\label{scn:upper_bd_overview}

The bulk of the proof controls the mixing time in total variation and we use results from Section~\ref{scn:other_distances} to extend it to the other distances.

For the proof, it is technically convenient to work with a refinement of the conductance known as the \emph{$s$-conductance}: for $0 < s < 1/2$, define
\begin{align}\label{eq:C_s}
    \msf C_s
    &:= \inf\Bigl\{\frac{\int_S T(\bx, S^\comp) \, \pi(\D \bx)}{\pi(S)-s} \Bigm\vert S \subseteq \R^d, \; s < \pi(S) \le \frac{1}{2} \Bigr\} \,.
\end{align}

A lower bound on the $s$-conductance translates into an upper bound on the mixing time in total variation distance, via the following lemma.

\begin{lemma}[{\citet[Corollary 1.6]{lovasz1993random}}]
    For any $n \in \N$ and $0 < s < 1/2$, the distribution of the $n$-th iterate $\mu_n$ of the MALA satisfies
    \begin{align*}
        \norm{\mu_n - \pi}_{\rm TV}
        &\le M_0 s + M_0 \exp\bigl( - \frac{\msf C_s^2 n}{2} \bigr),
    \end{align*}
    where $M_0$ is the warm start parameter of $\mu_0$.
\end{lemma}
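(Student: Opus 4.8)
The plan is to follow the isoperimetric-flow argument of \citet{lovasz1993random}. Write $g_n := \D\mu_n/\D\pi$ for the density of the $n$-th iterate. Since $\pi$ is stationary for the reversible kernel $T$ and $\mu_0 \le M_0\pi$, the one-line computation
\[
    \mu_{n+1}(E) = \int g_n(\bx)\, T(\bx, E)\,\pi(\D\bx) \le M_0 \int T(\bx, E)\,\pi(\D\bx) = M_0\,\pi(E)
\]
shows by induction that every iterate is $M_0$-warm, i.e.\ $g_n \le M_0$. The central object is the concave profile
\[
    F_n(x) := \sup\{\mu_n(A) : A \text{ Borel}, \ \pi(A) \le x\}, \qquad x \in [0,1],
\]
which is non-decreasing and concave (the supremum is attained at a super-level set of $g_n$, and parametrizing such sets by their $\pi$-measure makes $x \mapsto \mu_n(A)$ concave), with $F_n(0)=0$, $F_n(1)=1$, and the warmness bound $F_n(x) \le M_0 x$; together with the trivial bound $F_0(x)\le 1$ this gives $F_0(x) \le x + M_0\min(x, 1-x)$. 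The key reduction is the identity $\norm{\mu_n - \pi}_{\rm TV} = \sup_A (\mu_n(A) - \pi(A)) = \sup_{x \in [0,1]}(F_n(x) - x)$, so it suffices to bound $\sup_x(F_n(x)-x)$.

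The heart of the argument is a one-step smoothing inequality: for $x \in (s, 1-s)$,
\[
    F_{n+1}(x) \le \tfrac12\, F_n(x - \theta_x) + \tfrac12\, F_n(x + \theta_x), \qquad \theta_x \gtrsim \msf C_s\,\min\{x - s, \ 1 - s - x\},
\]
while for $x \le s$ one simply uses $F_{n+1}(x) \le M_0 x \le M_0 s$. To prove the smoothing inequality, fix a Borel set $A$ with $\pi(A) = x$ nearly attaining $F_{n+1}(x)$ — which may be taken to be a super-level set of $g_{n+1}$ — and split it according to the exit probabilities of $T$: let $A_1 := \{\bx \in A : T(\bx, A^\comp) < \msf C_s/2\}$, $A_2 := A \setminus A_1$, and partition $A^\comp$ analogously. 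The definition of the $s$-conductance lower-bounds the $\pi$-ergodic flow, $\int_A T(\bx, A^\comp)\,\pi(\D\bx) \ge \msf C_s\,(\pi(A) - s)$, which forces $\pi(A_2)$ — and, by reversibility, the matching boundary piece of $A^\comp$ — to have $\pi$-mass at least of order $\msf C_s\,\min\{x-s, 1-s-x\}$; combining this with reversibility and $M_0$-warmness to pass from $\pi$-mass to $\mu_n$-mass, and rearranging $\mu_{n+1}(A) = \int T(\bx, A)\,\mu_n(\D\bx)$ after the split, yields the displayed bound with $\mu_n$ evaluated on sets of $\pi$-measure $x \pm \theta_x$. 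I expect this step — converting the $s$-conductance bound into the three-point recursion, and in particular handling the degenerate regime $\pi(A) \approx s$ in which $\msf C_s$ supplies no contraction — to be the main obstacle; the rest is bookkeeping.

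Finally, I would run a barrier (super-solution) induction. Define the concave envelopes
\[
    G_n(x) := x + M_0 s + M_0\,e^{-\msf C_s^2 n/2}\,\sqrt{\min(x, 1-x)},
\]
and show $F_n \le G_n$ for all $n$ by induction. The base case is the warmness estimate above, since $M_0\min(x,1-x) \le M_0\sqrt{\min(x,1-x)}$. For the inductive step, apply the smoothing inequality together with the concavity and monotonicity of $G_n$: the affine part $x + M_0 s$ is reproduced exactly by the symmetric average $\tfrac12(G_n(x-\theta_x) + G_n(x+\theta_x))$, while for the square-root part the elementary estimate $\tfrac12(\sqrt{x - \theta} + \sqrt{x + \theta}) \le (1 - c\,\theta^2/x^2)\,\sqrt x \le e^{-c\,\theta^2/x^2}\sqrt x$, combined with $\theta_x^2 \gtrsim \msf C_s^2\,\min\{x,1-x\}^2$ away from a boundary layer of width $\asymp s$ (and the plain warm bound $F_{n+1}(x) \le M_0 x$ inside that layer), supplies the extra decay factor $e^{-\msf C_s^2/2}$; tuning the constants as in \citet{lovasz1993random} closes the induction across the boundary region, and it is precisely this region that produces the additive term $M_0 s$. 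Evaluating $F_n \le G_n$ at the maximizing $x$ and using $\sqrt{\min(x,1-x)} \le 1$ gives
\[
    \norm{\mu_n - \pi}_{\rm TV} = \sup_{x}(F_n(x) - x) \le M_0 s + M_0\, e^{-\msf C_s^2 n/2},
\]
as claimed.
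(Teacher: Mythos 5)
The first thing to note is that the paper does not prove this statement at all: it is imported verbatim from \citet[Corollary 1.6]{lovasz1993random} (with the warm start bounding the quantities $H_s$ and $H_s/s$ appearing there by $M_0 s$ and $M_0$), so there is no in-paper argument to compare against. Your reconstruction follows the correct Lov\'asz--Simonovits architecture: the profile $F_n(x)=\sup\{\mu_n(A):\pi(A)\le x\}$, the identity $\norm{\mu_n-\pi}_{\rm TV}=\sup_x(F_n(x)-x)$, a three-point smoothing recursion away from a boundary layer of width $s$, and a concave barrier of the form $x+M_0 s+M_0 e^{-n\msf C_s^2/2}\sqrt{\min(x,1-x)}$. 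The preservation of warmness, the base case, and the reduction of TV to the profile all check out.

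The genuine gap is exactly where you flag it: the one-step inequality $F_{n+1}(x)\le\frac12 F_n(x-\theta_x)+\frac12 F_n(x+\theta_x)$ is the entire content of the lemma, and the route you sketch for it --- splitting $A$ into $A_1=\{\bx\in A: T(\bx,A^\comp)<\msf C_s/2\}$ and $A_2=A\setminus A_1$ and ``rearranging'' --- is the mechanism of the conductance \emph{lower bound} (cf.\ Lemma~\ref{lem:standard_conductance_lem}), not of the Lov\'asz--Simonovits recursion, and it does not obviously produce a bound of the form $\frac12 F_n(y)+\frac12 F_n(z)$. The actual proof decomposes the transition function pointwise, $T(\bx,A)=\frac12\min\{2T(\bx,A),1\}+\frac12\,{(2T(\bx,A)-1)}_+$, bounds the $\mu_n$-integral of each $[0,1]$-valued piece by the \emph{fractional-set} extension of $F_n$ evaluated at its $\pi$-integral, and then uses stationarity (the two $\pi$-integrals sum to $2x$) together with the $s$-conductance flow bound and reversibility (they are separated by at least $2\msf C_s\,(\min\{x,1-x\}-s)$, up to the treatment of points with $T(\bx,A)>1/2$); concavity of $F_n$ converts this separation into the three-point inequality. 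Without this step your argument is an outline of the cited proof rather than a proof. A secondary issue: the exponent $\msf C_s^2 n/2$ with the explicit constant $1/2$ requires the exact separation constant in the smoothing inequality, which your ``$\theta_x\gtrsim\msf C_s\min\{x-s,1-s-x\}$'' does not supply; as written you would only obtain $\exp(-c\,\msf C_s^2 n)$ for an unspecified $c>0$ (which is harmless for the paper's purposes but does not match the stated lemma).
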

\begin{corollary}\label{cor:mixing_from_s_conductance}
    Taking $s = \varepsilon/(2M_0)$, it follows that
    \begin{align*}
        \norm{\mu_n - \pi}_{\rm TV} \le \varepsilon \qquad\text{provided that}~n \ge \frac{2}{\msf C_s^2} \ln \frac{2M_0}{\varepsilon}.
    \end{align*}
\end{corollary}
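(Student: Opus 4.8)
The plan is immediate: substitute the prescribed value $s = \varepsilon/(2M_0)$ into the total-variation bound of \citet[Corollary 1.6]{lovasz1993random} quoted above, and then choose $n$ large enough that each of the two resulting terms is at most $\varepsilon/2$.

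First I would dispense with the trivial case $\varepsilon \ge 1$, for which $\norm{\mu_n - \pi}_{\rm TV} \le 1 \le \varepsilon$ holds automatically and there is nothing to prove. So assume $\varepsilon < 1$; since $M_0 \ge 1$ by definition of a warm start, this guarantees $s = \varepsilon/(2M_0) \in (0, 1/2)$, so that $s$ is an admissible parameter for the $s$-conductance $\msf C_s$ defined in~\eqref{eq:C_s}. With this choice the first term of the bound is exactly $M_0 s = \varepsilon/2$. For the second term, the requirement $M_0 \exp(-\msf C_s^2 n / 2) \le \varepsilon/2$ is, after taking logarithms, equivalent to $\msf C_s^2 n / 2 \ge \ln(2M_0/\varepsilon)$, i.e.\ to $n \ge (2/\msf C_s^2) \ln(2M_0/\varepsilon)$, which is precisely the stated hypothesis. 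Adding the two estimates yields $\norm{\mu_n - \pi}_{\rm TV} \le \varepsilon/2 + \varepsilon/2 = \varepsilon$, as claimed.

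There is no genuine obstacle here: the corollary is a one-line rearrangement of the preceding lemma, obtained by splitting the error budget equally between the "bias" term $M_0 s$ and the "relaxation" term $M_0 \exp(-\msf C_s^2 n / 2)$. The only point deserving a moment's care is the admissibility constraint $0 < s < 1/2$ built into the definition of $\msf C_s$, which is why the trivial regime $\varepsilon \ge 1$ (equivalently $s \ge 1/2$) is handled separately at the outset.
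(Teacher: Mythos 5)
Your proof is correct and is exactly the computation the paper intends: plug $s=\varepsilon/(2M_0)$ into the Lov\'asz--Simonovits bound so that $M_0 s = \varepsilon/2$, and choose $n$ so that the exponential term is also at most $\varepsilon/2$. The extra care about the admissibility of $s$ is a reasonable (if minor) addition that the paper leaves implicit.
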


Motivated by the standard conductance lemma (Lemma~\ref{lem:standard_conductance_lem}) and the decomposition~\eqref{eq:TV_decomposition}, in order to bound the $s$-conductance from below we will first bound $\norm{T_{\bx} - Q_{\bx}}_{\rm TV}$, as in Section~\ref{scn:upper_bd}. The outline of the proof is as follows:
\begin{enumerate}
    \item In Section~\ref{scn:pointwise_projection_proof}, we prove the projection properties of MALA (Lemma~\ref{lem:tv_projection} and Theorem~\ref{thm:pointwise_projection}).
    \item In Section~\ref{scn:tv_expectation}, we use the projection property (Lemma~\ref{lem:tv_projection}) along with stochastic calculus to bound the expectation $\E{\norm{T_{\bx} - Q_{\bx}}}_{\rm TV}$ when $\bx \sim \pi$.
    \item In Section~\ref{scn:concentration_of_TV}, we use the pointwise projection property,  together with more stochastic calculus, in order to prove a concentration inequality for $\norm{T_{\bx} - Q_{\bx}}_{\rm TV}$ when $\bx \sim \pi$.
    \item In Section~\ref{scn:conductance_argument}, we use the concentration bound of Section~\ref{scn:concentration_of_TV}, together with ideas from the proof of the standard conductance lemma (Lemma~\ref{lem:standard_conductance_lem}), in order to lower bound the $s$-conductance.
    Together with Corollary~\ref{cor:mixing_from_s_conductance}, it yields the mixing time bound of Theorem~\ref{thm:mala_upper_bd} in total variation distance.
    \item Finally in Section~\ref{scn:other_distances}, we explain how the mixing time bound in total variation distance implies mixing time bounds in other distances between probability measures.
\end{enumerate}

\subsection{Proof of the projection properties}\label{scn:pointwise_projection_proof}

We start with a basic fact about MALA.

\begin{proposition} \label{prop:basics}
Let $Q$ be the proposal kernel and let $T$ be the MALA kernel with proposal $Q$.
Then,
\begin{align*}
    \norm{T_{\bx}-Q_{\bx}}_{\TV}  = \int_{\R^d\setminus\{{\bx}\}} \abs{T(\bx,\by) - Q(\bx,\by)} \, \D \by
    = 1- \int_{\R^d} Q(\bx,\by)A(\bx,\by) \, \D \by.
\end{align*}
\end{proposition}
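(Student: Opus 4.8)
The plan is to read off the claim from the Hahn--Jordan decomposition of the signed measure $T_{\bx} - Q_{\bx}$. Recall first that for probability measures $\mu,\nu$ on $\R^d$ one has $\norm{\mu-\nu}_{\TV} = (\mu-\nu)^+(\R^d) = (\mu-\nu)^-(\R^d)$, where $(\mu-\nu)^{\pm}$ denote the positive and negative parts; so it suffices to identify these two parts explicitly.

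From the definition~\eqref{eq:metro_adjust} of the MALA kernel, $T_{\bx}$ is the sum of the point mass $[1-A(\bx)]\,\delta_{\bx}$ and the sub-probability measure with Lebesgue density $\by\mapsto Q(\bx,\by)\,A(\bx,\by)$, where by~\eqref{eq:metro_adjust} the atom has mass $1-A(\bx) = 1-\int Q(\bx,\by)\,A(\bx,\by)\,\D\by \ge 0$. Since $Q$ is atomless, $Q_{\bx}(\{\bx\})=0$, so the signed measure $T_{\bx}-Q_{\bx}$ decomposes as a point mass at $\bx$ of mass $1-A(\bx)\ge 0$, which is singular with respect to Lebesgue measure, plus an absolutely continuous part with density $\by\mapsto Q(\bx,\by)\,\bigl(A(\bx,\by)-1\bigr)$. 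Crucially, $A(\bx,\by)=1\wedge a(\bx,\by)\le 1$ by~\eqref{eq:accept_prob}, so this density is everywhere $\le 0$.

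Thus $T_{\bx}-Q_{\bx}$ is the difference of two mutually singular nonnegative measures of opposite sign: its positive part is exactly $[1-A(\bx)]\,\delta_{\bx}$ and its negative part is exactly the absolutely continuous measure with density $Q(\bx,\cdot)\,(1-A(\bx,\cdot))$. Consequently $\norm{T_{\bx}-Q_{\bx}}_{\TV}$ equals the mass of the positive part, $1-A(\bx) = 1-\int Q(\bx,\by)\,A(\bx,\by)\,\D\by$, which is the last equality. It also equals the mass of the negative part, $\int_{\R^d} Q(\bx,\by)\,\bigl(1-A(\bx,\by)\bigr)\,\D\by$; and since on $\R^d\setminus\{\bx\}$ the measure $T_{\bx}$ agrees with its absolutely continuous part (excising the single point $\bx$ changes no Lebesgue integral), this quantity is precisely $\int_{\R^d\setminus\{\bx\}}\abs{T(\bx,\by)-Q(\bx,\by)}\,\D\by$, which is the first equality.

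There is no genuine obstacle here; the argument is pure measure-theoretic bookkeeping. The only points requiring care are (i) correctly separating the atom at $\bx$ from the absolutely continuous part, so that the two candidate pieces really are mutually singular, and (ii) using $A(\bx,\by)\le 1$ to pin down the sign of the absolutely continuous density, which makes the Hahn--Jordan identification immediate. The identity is insensitive to the normalization convention for $\norm{\cdot}_{\TV}$, since the positive and negative parts of $T_{\bx}-Q_{\bx}$ carry equal mass.
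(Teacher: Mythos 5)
Your proof is correct and follows essentially the same route as the paper's: both isolate the atom of $T_{\bx}$ at $\bx$ from the absolutely continuous part with density $Q(\bx,\cdot)\,(A(\bx,\cdot)-1)\le 0$ and compute that each piece carries mass $1-\int Q(\bx,\by)A(\bx,\by)\,\D\by$. Your Hahn--Jordan packaging is a slightly cleaner way of organizing the same bookkeeping the paper does directly with the $\tfrac12 L^1$ formula.
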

\begin{proof}
First, since $T_{\bx}$ has an atom at $\bx$ and $Q_{\bx}$ does not, we have
\begin{align*}
   \norm{Q_{\bx}-T_{\bx}}_{\TV} = \frac{1}{2}\,\Bigl( T_{\bx}(\{\bx\}) +\int_{\R^d\setminus\{\bx\}} \abs{T(\bx,\by) - Q(\bx,\by)} \, \D \by \Bigr)  \,.
\end{align*} 
By the definition of the accept-reject step, 
\begin{align*}
    T_{\bx}(\{\bx\}) &= 1- \int_{\R^d\setminus\{\bx\}} T(\bx,\by) \,\D \by=  1- \int_{\R^d} Q(\bx,\by) A(\bx,\by) \,\D \by\,,
\end{align*}
whereas
\begin{align*}
    \int_{\R^d\setminus\{\bx\}} \abs{T(\bx,\by) - Q(\bx,\by)} \, \D \by
    &= 1 - \int_{\R^d} Q(\bx,\by) A(\bx,\by) \, \D \by \,.
\end{align*}
The result follows.
\end{proof}

We now prove the projection properties (Lemma~\ref{lem:tv_projection} and Theorem~\ref{thm:pointwise_projection}).

\medskip{}

\begin{proof}[Proof of Lemma~\ref{lem:tv_projection}]
Since the transition kernel $\bar Q$ corresponding to the continuous-time Langevin diffusion is reversible with stationary distribution $\pi$, it follows from~\citet{billeradiaconis2001mhprojection} that
\begin{align*}
    &\iint\displaylimits_{(\R^d\times\R^d) \setminus \Delta } \abs{T(\bx,\by) - Q(\bx,\by)} \, \pi(\D \bx) \, \D \by
    \le \iint\displaylimits_{(\R^d\times\R^d) \setminus \Delta} \abs{\bar Q(\bx,\by) - Q(\bx,\by)} \, \pi(\D \bx) \, \D \by\,,
\end{align*}
where $\Delta=\{(\bx,\by) \in \R^d \times \R^d \,:\, \bx=\by\}$.
Since $Q_{\bx}$ and $\bar Q_{\bx}$ have no atoms, the right-hand side is equal to $2\E_{\bx \sim \pi}\norm{\bar Q_{\bx} - Q_{\bx}}_{\rm TV}$.
On the other hand, the left-hand side is equal to $\E_{\bx\sim \pi}\norm{T_{\bx} - Q_{\bx}}_{\rm TV}$ due to Proposition~\ref{prop:basics}.
\end{proof}

\begin{proof}[Proof of Theorem~\ref{thm:pointwise_projection}]
For any $\bx$, we have
\begin{align*}
    &\norm{T_{\bx} - Q_{\bx}}_{\rm TV}
    = \int \{1-A(\bx,\by)\} \,Q(\bx,\by) \, \D \by
    = \int \Bigl[1- \Bigl(1 \wedge \frac{\pi(\by) Q(\by,\bx)}{\pi(\bx) Q(\bx,\by)} \Bigr)\Bigr] \, Q(\bx,\by) \, \D \by \\
    &\qquad \le \int \Bigl\lvert 1- \frac{\pi(\by) Q(\by,\bx)}{\pi(\bx) Q(\bx,\by)} \Bigr\rvert \, Q(\bx,\by) \, \D \by \\
    &\qquad \le \int \Bigl\lvert 1- \frac{\pi(\by) \bar Q(\by,\bx)}{\pi(\bx) Q(\bx,\by)} \Bigr\rvert \, Q(\bx,\by) \, \D \by  + \int \frac{\pi(\by) \bar Q(\by, \bx)}{\pi(\bx)} \, \bigl\lvert \frac{Q(\by,\bx)}{\bar Q(\by,\bx)} - 1\bigr\rvert \, \D \by.
\end{align*}
Observe that the first term is given by
$$
\int \Bigl\lvert 1- \frac{\pi(\by) \bar Q(\by,\bx)}{\pi(\bx) Q(\bx,\by)} \Bigr\rvert \, Q(\bx,\by) \, \D \by
    = \int \Bigl\lvert Q(\bx,\by) - \frac{\pi(\by) \bar Q(\by,\bx)}{\pi(\bx)} \Bigr\rvert \, \D \by 
    = 2 \, \norm{Q_{\bx} - \bar Q_{\bx}}_{\rm TV}\,,
$$
where in the second identity, we used the reversibility of $\bar Q$. 
This concludes the proof of the first inequality.

We now deduce the second inequality from the first. Using monotonicity and convexity of $\Phi$ respectively, we get, 
    \begin{align*}
        &\E\Phi(\norm{T_{\bx} - Q_{\bx}}_{\rm TV})
        \le \E \Phi\Bigl( 2\,\norm{\bar Q_{\bx} - Q_{\bx}}_{\rm TV} + \int \frac{\pi(\by) \bar Q(\by, \bx)}{\pi(\bx)} \, \bigl\lvert \frac{Q(\by,\bx)}{\vphantom{\big|}\bar Q(\by,\bx)} - 1\bigr\rvert \, \D \by \Bigr) \\
        &\qquad \le \frac{1}{2} \E \Phi(4 \, \norm{\bar Q_{\bx} - Q_{\bx}}_{\rm TV}) + \frac{1}{2} \E \Phi\Bigl(2 \int \frac{\pi(\by) \bar Q(\by, \bx)}{\pi(\bx)} \, \bigl\lvert \frac{Q(\by,\bx)}{\vphantom{\big|}\bar Q(\by,\bx)} - 1\bigr\rvert \, \D \by \Bigr)\,,
    \end{align*}
    where we take expectation with respect to $\bx \sim \pi$. Next, nothing that $\int \pi(\by) \bar Q(\by, \bx) \, \D \by = \pi(\bx)$, we apply Jensen's inequality to yield
    \begin{align*}
        &\E \Phi\Bigl(2 \int \frac{\pi(\by) \bar Q(\by, \bx)}{\pi(\bx)} \, \bigl\lvert \frac{Q(\by,\bx)}{\vphantom{\big|}\bar Q(\by,\bx)} - 1\bigr\rvert \, \D \by \Bigr) \\
        &\qquad = \int \Phi\Bigl(2 \int \frac{\pi(\by) \bar Q(\by, \bx)}{\pi(\bx)} \, \bigl\lvert \frac{Q(\by,\bx)}{\vphantom{\big|}\bar Q(\by,\bx)} - 1\bigr\rvert \, \D \by \Bigr) \, \pi(\bx) \, \D \bx \\
        &\qquad \le \iint \Phi\bigl(2\,\bigl\lvert \frac{Q(\by,\bx)}{\vphantom{\big|}\bar Q(\by,\bx)} - 1\bigr\rvert\bigr) \, \pi(\by) \bar Q(\by,\bx) \, \D \bx \, \D \by \\
        &\qquad = \iint \Phi\bigl(2\,\bigl\lvert \frac{Q(\bx,\by)}{\vphantom{\big|}\bar Q(\bx,\by)} - 1\bigr\rvert\bigr) \, \pi(\bx) \bar Q(\bx,\by) \, \D \bx \, \D \by\,,
    \end{align*}
    where we switched $\bx$ and $\by$ in the notation of the last line.
\end{proof}

\subsection{Expectation of the total variation}\label{scn:tv_expectation}

We now bound $\E\norm{T_{\bx} - Q_{\bx}}_{\rm TV}$  when $\bx \sim \pi$ using the projection property (Lemma~\ref{lem:tv_projection}). Akin to prior work such as~\cite{dalalyan2012sparse}, our primary tool to analyze the discretization of the Langevin diffusion is the Girsanov theorem from stochastic calculus \citep[see, e.g.][for classical treatments]{legall2016stochasticcalc,stroockMultidimensionalDiffusionProcesses2007}.

\begin{lemma}[Girsanov theorem]\label{lem:girsanov}
    Let $\bar{\mb Q}_{\bx}$ denote the probability measure on path space induced by the solution ${(\bar \bX_t)}_{t \in [0, h]}$ of the continuous-Langevin diffusion SDE~\eqref{eq:langevin_sde} started at $\bx$ and run for time $h>0$.
    Moreover, let $\mb Q_{\bx}$ denote the probability measure on path space induced by the solution of the following SDE with constant drift
    \begin{align*}
        \D \bX_t
        &= -\nabla V(\bx) \, \D t + \sqrt 2 \, \D \bB_t, \qquad \bX_0 = \bx.
    \end{align*}
    Then, $\mb Q_{\bx}$ is absolutely continuous with respect to $\bar{\mb Q}_{\bx}$ and has density given by  Radon-Nikodym derivative:
    \begin{align*}
        \frac{\D\mb Q_{\bx}}{\vphantom{\big|}\D \bar{\mb Q}_{\bx}}\bigl({(\bar \bX_t)}_{t}\bigr)  
       = \exp\Bigl[ \frac{1}{\sqrt 2} \int_0^h \langle \nabla V(\bar \bX_t) - \nabla V(\bx), \D \bB_t \rangle - \frac{1}{4} \int_0^h  \norm{\nabla V(\bar \bX_t) - \nabla V(\bx)}^2 \, \D t\Bigr].
    \end{align*}
\end{lemma}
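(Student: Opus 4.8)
The plan is to derive Lemma~\ref{lem:girsanov} as a direct application of Girsanov's theorem on the canonical path space $C([0,h];\R^d)$, letting $\bar\bX$ denote the coordinate process and equipping it with its natural filtration. First I would observe that, by the very definition of the SDE~\eqref{eq:langevin_sde}, the process
\[
    \bB_t := \frac{1}{\sqrt 2}\Bigl(\bar\bX_t - \bx + \int_0^t \nabla V(\bar\bX_s)\,\D s\Bigr), \qquad t\in[0,h],
\]
is a standard Brownian motion under $\bar{\mb Q}_{\bx}$; in particular, the stochastic integral appearing in the claimed formula is a well-defined It\^o integral with respect to $\bar{\mb Q}_{\bx}$. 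Writing $\theta_t := \tfrac{1}{\sqrt 2}(\nabla V(\bar\bX_t) - \nabla V(\bx))$, the asserted Radon--Nikodym derivative is exactly the terminal value $Z_h$ of the Dol\'eans--Dade exponential $Z_t := \exp\bigl(\int_0^t \langle\theta_s,\D\bB_s\rangle - \tfrac12\int_0^t\norm{\theta_s}^2\,\D s\bigr)$.

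The heart of the matter --- and the step I expect to be the main obstacle --- is to verify that ${(Z_t)}_{t\in[0,h]}$ is a genuine $\bar{\mb Q}_{\bx}$-martingale, not merely a nonnegative local martingale, so that $\E_{\bar{\mb Q}_{\bx}} Z_h = 1$ and $\D\mb Q_{\bx}' := Z_h\,\D\bar{\mb Q}_{\bx}$ defines a probability measure. Since $\nabla V$ is $\beta$-Lipschitz and vanishes at $\bs 0$, we have $\norm{\theta_t} \le \tfrac{\beta}{\sqrt 2}\norm{\bar\bX_t - \bx}$, so $\theta$ grows at most linearly along the path, and a standard It\^o-plus-Gr\"onwall computation for $\norm{\bar\bX_t - \bx}^2$ (using $\langle\nabla V(\bar\bX_t) - \nabla V(\bx),\, \bar\bX_t - \bx\rangle \ge \alpha\norm{\bar\bX_t - \bx}^2$ for the contractive term and $\norm{\nabla V(\bx)} \le \beta\norm{\bx}$ for the forcing) shows that $\sup_{t\le h}\norm{\bar\bX_t - \bx}^2$ has sub-exponential tails with scale $O(dh)$. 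Consequently Novikov's condition $\E_{\bar{\mb Q}_{\bx}}\exp\bigl(\tfrac12\int_0^h\norm{\theta_t}^2\,\D t\bigr) < \infty$ holds whenever $h \lesssim 1/(\beta\sqrt d)$ --- which comfortably contains the step-size regime of Theorem~\ref{thm:mala_upper_bd} --- and for a general finite $h$ one recovers the martingale property by the usual device of partitioning $[0,h]$ into finitely many subintervals on each of which Novikov applies and composing the resulting changes of measure. I would either carry out these routine moment estimates explicitly or cite the corresponding statement for SDEs with globally Lipschitz coefficients on a bounded time horizon.

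Granting the martingale property, Girsanov's theorem yields that under $\mb Q_{\bx}'$ the process $\bB'_t := \bB_t - \int_0^t \theta_s\,\D s$ is a standard Brownian motion on $[0,h]$. Substituting the definitions of $\bB$ and $\theta$ gives the identity $\sqrt 2\,\bB'_t = \bar\bX_t - \bx + t\,\nabla V(\bx)$, so that under $\mb Q_{\bx}'$ the coordinate process satisfies $\D\bar\bX_t = -\nabla V(\bx)\,\D t + \sqrt 2\,\D\bB'_t$ with $\bar\bX_0 = \bx$. This SDE has the explicit, pathwise unique solution $\bar\bX_t = \bx - t\,\nabla V(\bx) + \sqrt 2\,\bB'_t$, whose law on $C([0,h];\R^d)$ is by definition $\mb Q_{\bx}$; hence $\mb Q_{\bx}' = \mb Q_{\bx}$. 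Finally, since $Z_h > 0$ holds $\bar{\mb Q}_{\bx}$-almost surely, we conclude that $\mb Q_{\bx} \ll \bar{\mb Q}_{\bx}$ with $\D\mb Q_{\bx}/\D\bar{\mb Q}_{\bx} = Z_h$, which is precisely the asserted formula. (Should the direct Novikov estimate prove delicate, a robust fallback is to run the whole argument on the stopped intervals $[0, h\wedge\tau_R]$ with $\tau_R := \inf\{t : \norm{\bar\bX_t - \bx} \ge R\}$, where $Z$ is trivially a martingale, and pass to the limit $R\to\infty$ using the moment bounds above for uniform integrability.)
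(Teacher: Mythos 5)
Your argument is correct and is the standard Girsanov derivation; the paper does not write out a proof of this lemma at all but simply cites Proposition~2 of \cite{dalalyan2012sparse}, which proceeds along exactly these lines (identify the Dol\'eans--Dade exponential of $\theta_t = \tfrac{1}{\sqrt 2}(\nabla V(\bar\bX_t)-\nabla V(\bx))$, verify the martingale property, and read off the law of the tilted process). One minor quantitative remark: since the sub-exponential tail scale of $\sup_{t\le h}\norm{\bar\bX_t-\bx}^2$ is $O(h)$ (cf.\ Lemma~\ref{lem:sup_xt_control}), Novikov's condition already holds for $h\lesssim 1/\beta$, so your restriction $h\lesssim 1/(\beta\sqrt d)$ is more conservative than necessary --- but your partitioning fallback covers general $h$ regardless, so nothing is lost.
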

\begin{proof}
    See the proof of Proposition 2 in~\cite{dalalyan2012sparse}.
\end{proof}

In the following lemma, we use Lemma~\ref{lem:ito_distance}.

\begin{lemma}\label{lem:expectation_TV}
Assume $h \le 1/(3\beta^{4/3})$.
For any $\bx \in \R^d$,
\begin{align*}
\norm{ \bar Q_{\bx}- Q_{\bx}}_{\rm TV}  \leq \frac{1}{2}\beta h \sqrt{d+\beta^{2/3} \,\norm{\bx}^2}\,.
\end{align*}
\end{lemma}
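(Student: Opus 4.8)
\textbf{Proof plan.} The idea is to lift the comparison of the two endpoint laws to a comparison of the corresponding path measures, and then apply Girsanov's theorem. Since $\bar Q_{\bx}$ and $Q_{\bx}$ are the time-$h$ marginals of the path measures $\bar{\mb Q}_{\bx}$ and $\mb Q_{\bx}$ from Lemma~\ref{lem:girsanov}, the data-processing inequality gives $\norm{\bar Q_{\bx} - Q_{\bx}}_{\rm TV} \le \norm{\bar{\mb Q}_{\bx} - \mb Q_{\bx}}_{\rm TV}$, and Pinsker's inequality gives $\norm{\bar{\mb Q}_{\bx} - \mb Q_{\bx}}_{\rm TV} \le \sqrt{\tfrac12 \KL(\mb Q_{\bx} \,\|\, \bar{\mb Q}_{\bx})}$. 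It therefore suffices to bound the relative entropy of the constant-drift path measure with respect to the Langevin path measure.

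To this end I would use the Radon--Nikodym derivative of Lemma~\ref{lem:girsanov} (together with Lemma~\ref{lem:ito_distance}) to express the log-likelihood ratio along the constant-drift process $\bX_t = \bx - t\,\nabla V(\bx) + \sqrt 2\, \bB_t$. The Itô correction combines with the quadratic-variation term, and the remaining stochastic integral is a true martingale once one checks the integrability afforded by $\beta$-smoothness, so that its expectation vanishes; this leaves
\begin{align*}
    \KL(\mb Q_{\bx} \,\|\, \bar{\mb Q}_{\bx})
    &= \frac14 \, \E \int_0^h \norm{\nabla V(\bX_t) - \nabla V(\bx)}^2 \, \D t.
\end{align*}

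The remaining step is a routine moment estimate. By $\beta$-smoothness, $\norm{\nabla V(\bX_t) - \nabla V(\bx)} \le \beta \, \norm{\bX_t - \bx} = \beta \, \norm{-t\,\nabla V(\bx) + \sqrt 2\, \bB_t}$, and since $\bB_t$ is centered,
\begin{align*}
    \E \norm{\bX_t - \bx}^2
    &= t^2 \, \norm{\nabla V(\bx)}^2 + 2 t d.
\end{align*}
Integrating over $t \in [0,h]$ yields $\KL(\mb Q_{\bx} \,\|\, \bar{\mb Q}_{\bx}) \le \tfrac{\beta^2}{4}\bigl(\tfrac{h^3}{3}\,\norm{\nabla V(\bx)}^2 + h^2 d\bigr)$. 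Using $\nabla V(\bs 0) = \bs 0$ with $\beta$-smoothness to write $\norm{\nabla V(\bx)} \le \beta \norm{\bx}$, and the hypothesis $h \le 1/(3\beta^{4/3})$ to bound $\tfrac{h\beta^2}{3} \le \beta^{2/3}$, we get $\tfrac{h^3}{3}\,\norm{\nabla V(\bx)}^2 + h^2 d \le h^2\,(d + \beta^{2/3}\norm{\bx}^2)$. Combining with the data-processing and Pinsker inequalities gives $\norm{\bar Q_{\bx} - Q_{\bx}}_{\rm TV} \le \tfrac{\beta h}{2\sqrt 2}\,\sqrt{d + \beta^{2/3}\norm{\bx}^2} \le \tfrac12\,\beta h\,\sqrt{d + \beta^{2/3}\norm{\bx}^2}$, as claimed.

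The only genuinely delicate point is the justification of the Girsanov computation, i.e.\ that the exponential in Lemma~\ref{lem:girsanov} is a legitimate change of measure so that the stochastic integral is a martingale with zero mean rather than merely a local martingale; under the present assumptions this follows from standard criteria exploiting the linear growth of $\nabla V$ and the Gaussian tails of $\bX_t$, and is presumably what Lemma~\ref{lem:ito_distance} records. Everything else is elementary.
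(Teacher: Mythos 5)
Your proposal is correct, and it follows the same high-level template as the paper (path-space lifting, data-processing, Girsanov, Pinsker), but with one genuine difference in the middle step: the direction of the KL divergence. The paper bounds $\KL(\bar{\mb Q}_{\bx} \mmid \mb Q_{\bx})$, so the expectation of $\int_0^h \norm{\nabla V(\cdot_t) - \nabla V(\bx)}^2\,\D t$ is taken along the \emph{Langevin} diffusion $\bar\bX$; bounding $\E[\norm{\bar\bX_t-\bx}^2]$ then requires the separate Gr\"onwall-type estimate of Lemma~\ref{lem:ito_distance}. You instead bound $\KL(\mb Q_{\bx} \mmid \bar{\mb Q}_{\bx})$, under which the relevant process is the explicit Gaussian $\bX_t = \bx - t\,\nabla V(\bx) + \sqrt2\,\bB_t$, so $\E\norm{\bX_t-\bx}^2 = t^2\,\norm{\nabla V(\bx)}^2 + 2td$ is exact and elementary. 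Since TV is symmetric, Pinsker accepts either direction, and your arithmetic checks out ($\tfrac{h\beta^2}{3} \le \tfrac{\beta^{2/3}}{9}$ under the stated step-size restriction, giving the constant $\tfrac{1}{2\sqrt2} < \tfrac12$). Your route buys a self-contained computation that avoids Gr\"onwall entirely; the paper's route has the advantage that Lemma~\ref{lem:ito_distance} is reused elsewhere in the analysis (e.g.\ in the moment bounds of Section~\ref{sec:moment_bdd}), so the choice there is one of economy across the whole argument rather than within this lemma. One small correction: Lemma~\ref{lem:ito_distance} is the second-moment bound for the Langevin diffusion, not a record of the martingale/Novikov justification for Girsanov as you surmise at the end; that justification is delegated to the proof of Lemma~\ref{lem:girsanov} (via \citealp{dalalyan2012sparse}), and in your reversed direction it follows from Novikov's condition, since $\nabla V$ has linear growth and $\bX_t$ is Gaussian.
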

\begin{proof}
Let $\ept$ denote the function that maps a continuous curve ${(y_t)}_{t\in[0,h]}$ in $\R^d$ to its endpoint: $\ept({(y_t)}_{t\in [0,h]}):= y_h$.
Then, it is clear that
\begin{align*}
    Q_{\bx}  = \ept_{\#} \mb Q_{\bx} \qquad \text{and}\qquad \bar{Q}_{\bx}  = \ept_{\#} \bar{\mb Q}_{\bx} \,,
\end{align*}
where the notation $f_\# \mu$ denotes the pushforward of a measure $\mu$ under the mapping $f$.
On the one hand, it follows from the data processing inequality that 
\begin{align*}
    \KL(\bar Q_{\bx} ~\|~ Q_{\bx}) = \KL(  \ept_{\#} \bar{\mb Q}_{\bx}~\|~\ept_{\#} \mb Q_{\bx} )\leq \KL(    \bar{\mb Q}_{\bx}~\|~\mb Q_{\bx})\,.  
\end{align*}
On the other hand, the Girsanov theorem (in the form of Lemma~\ref{lem:girsanov}) implies that 
\begin{align*}
    \KL(    \bar{\mb Q}_{\bx}~\|~\mb Q_{\bx})&=-\E \ln \frac{{\D \mb Q}_{\bx}}{\D \bar{\mb Q}_{\bx}}(\bar{\bX}_t) = \frac{1}{4}  \int_0^{h} \E[\norm{\nabla V(\bar{\bX}_t) - \nabla V(\bx)}^2] \, \D t\\
    &\leq  \frac{\beta^2}{4}  \int_0^{h} \E[\norm{ \bar{\bX}_t - \bx}^2] \, \D t
    \le \frac{3 \beta^2 h^2 \, (d+\beta^{2/3} \,\norm{\bx}^2)}{8} \,,
\end{align*}
where we used the $\beta$-smoothness of $V$ and Lemma~\ref{lem:ito_distance}.
Now applying Pinsker's inequality, we obtain the desired inequality.
\end{proof}

It follows from Lemma~\ref{lem:expectation_TV} that when $\bx \sim \pi$, we get
\begin{align}\label{eq:expectation_Qbar_Q}
    \begin{aligned}
    \E\norm{\bar Q_{\bx} - Q_{\bx}}_{\rm TV}
    &\le \frac{1}{2} \beta h \E\sqrt{d+\beta^{2/3} \,\norm{\bx}^2}
    \le \frac{1}{2} \beta h \sqrt{d + \beta^{2/3} \E[\norm{\bx}^2]} 
    \lesssim \beta^{4/3} h \sqrt{\frac{d}{\alpha}} \,,
    \end{aligned}
\end{align}
where we used the second moment bound of Lemma~\ref{lem:properties_of_strong_log_concave}. Together with Lemma~\ref{lem:tv_projection}, it yields
\begin{align}\label{eq:expectation_tv}
    \E\norm{T_{\bx} - Q_{\bx}}_{\rm TV}
    &\le 2\E\norm{\bar Q_{\bx} - Q_{\bx}}_{\rm TV}
    \lesssim \beta^{4/3} h \sqrt{\frac{d}{\alpha}} \,.
\end{align}

We conclude this section with a concentration inequality which we use later in the argument.

\begin{lemma}\label{lem:TV_concentration_1}
    Assume $h \le 1/(3\beta^{4/3})$ and let $\bx \sim \pi$.
    For any $\delta > 0$, with probability at least $1-\delta$,
    \begin{align*}
        \norm{\bar Q_{\bx} - Q_{\bx}}_{\rm TV}
        &\lesssim \beta^{4/3} h \sqrt{\frac{d + \log(1/\delta)}{\alpha}}.
    \end{align*}
\end{lemma}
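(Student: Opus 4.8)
\textbf{Proof proposal for Lemma~\ref{lem:TV_concentration_1}.}

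The plan is to upgrade the expectation bound of Lemma~\ref{lem:expectation_TV} (controlling $\norm{\bar Q_{\bx} - Q_{\bx}}_{\rm TV}$ for fixed $\bx$ in terms of $\norm{\bx}$) into a high-probability statement over $\bx \sim \pi$, by combining it with concentration of $\norm{\bx}^2$ under a strongly log-concave measure. First I would recall that Lemma~\ref{lem:expectation_TV} gives, for $h \le 1/(3\beta^{4/3})$, the deterministic bound $\norm{\bar Q_{\bx}-Q_{\bx}}_{\rm TV} \le \tfrac12 \beta h \sqrt{d + \beta^{2/3}\norm{\bx}^2}$. So it suffices to show that with probability at least $1-\delta$ over $\bx\sim\pi$ one has $\beta^{2/3}\norm{\bx}^2 \lesssim (d + \log(1/\delta))/\alpha$; plugging this in and using $\sqrt{u+v}\le\sqrt u + \sqrt v$ (together with $d \lesssim (d+\log(1/\delta))/\alpha$, valid since $\alpha \le 1$) yields the claimed bound $\beta^{4/3} h \sqrt{(d+\log(1/\delta))/\alpha}$.

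The key step is therefore a concentration inequality for $\norm{\bx}$ when $\bx\sim\pi$ and $\pi$ is $\alpha$-strongly log-concave with minimizer at $\bs 0$. I would invoke the standard fact (which should be collected in Lemma~\ref{lem:properties_of_strong_log_concave}, cited elsewhere in the paper for the second moment bound $\E[\norm{\bx}^2]\lesssim d/\alpha$) that strongly log-concave measures satisfy a dimension-free concentration / sub-Gaussian tail: the map $\bx\mapsto\norm{\bx}$ is $1$-Lipschitz, so by the Bakry--Émery criterion $\pi$ satisfies a log-Sobolev inequality with constant $\alpha$, and hence $\norm{\bx}$ concentrates around its mean with a sub-Gaussian tail of variance proxy $1/\alpha$; that is, $\Pr\{\norm{\bx} \ge \E\norm{\bx} + t\} \le \exp(-\alpha t^2/2)$. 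Combining with $\E\norm{\bx} \le \sqrt{\E[\norm{\bx}^2]} \lesssim \sqrt{d/\alpha}$ and choosing $t \asymp \sqrt{\log(1/\delta)/\alpha}$ gives $\norm{\bx}^2 \lesssim (d + \log(1/\delta))/\alpha$ with probability at least $1-\delta$, as desired.

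The main obstacle is essentially bookkeeping rather than a genuine difficulty: I must make sure the concentration-of-$\norm{\bx}$ statement is either already available in Lemma~\ref{lem:properties_of_strong_log_concave} or cheaply derivable from the properties listed there, and I must track the $\alpha$ and $\beta$ dependencies carefully so that the final bound matches the stated one (in particular using $\beta \ge 1 \ge \alpha$ to absorb the bare $\sqrt d$ term into $\sqrt{d/\alpha}$ and to fold the $\beta^{2/3}$ factor into $\beta^{4/3}$). No sharpness in the constants is needed since the statement is up to $\lesssim$, so once the sub-Gaussian tail for $\norm{\bx}$ is in hand the rest is a one-line substitution.
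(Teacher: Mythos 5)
Your proposal is correct and follows essentially the same route as the paper: combine the pointwise bound of Lemma~\ref{lem:expectation_TV} with the sub-Gaussian concentration of a Lipschitz functional under the $\alpha$-strongly log-concave measure $\pi$ (item 3 of Lemma~\ref{lem:properties_of_strong_log_concave}). The only cosmetic difference is that the paper applies the Lipschitz concentration directly to the map $\bx \mapsto \frac{1}{2}\beta^{4/3} h \sqrt{d+\norm{\bx}^2}$ (which is $\frac{1}{2}\beta^{4/3}h$-Lipschitz), whereas you apply it to $\norm{\cdot}$ and then square; both yield the stated bound.
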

\begin{proof}
    Let $f(\bx) := \frac{1}{2} \beta^{4/3} h \sqrt{d+\norm{\bx}^2}$.
    Then,
    \begin{align*}
        \norm{\nabla f(\bx)}
        &= \frac{\beta^{4/3} h \, \norm{\bx}}{2\sqrt{d+\norm{\bx}^2}}
        \le \frac{1}{2} \beta^{4/3} h.
    \end{align*}
    Thus, $f(\bx)$ is $\frac{1}{2} \beta^{4/3} h$-Lipschitz, and it follows from sub-Gaussian concentration (Lemma~\ref{lem:properties_of_strong_log_concave}) that with probability at least $1-\delta$,
    \begin{align*}
        f(\bx)
        &\le \E f(\bx) + \beta^{4/3} h \sqrt{\frac{1}{2\alpha} \ln\frac{1}{\delta}}.
    \end{align*}
    We have calculated $\E f(\bx) \lesssim \beta^{4/3} h\sqrt{d/\alpha}$ in~\eqref{eq:expectation_Qbar_Q}, and the result now follows from the pointwise bound in Lemma~\ref{lem:expectation_TV}.
\end{proof}

\subsection{Concentration of the total variation}\label{scn:concentration_of_TV}
Equation~\eqref{eq:expectation_tv} provides a control the total variation distance between the MALA kernel and the proposal \emph{in expectation}. The main result of this section is an extension of this result to a control \emph{with high probability} captured in the following proposition.
\begin{proposition}\label{prop:tv_concentration}
Fix $c_0 > 0$ and $0 < s < 1/2$.
Then, there exists a constant $c_1 > 0$, depending only on $c_0$, such that with step size
\begin{align*}
    h
    &= \frac{c_1 \alpha^{1/2}}{\beta^{4/3} d^{1/2} \log(d\kappa/s)} \,,
\end{align*}
the following holds with probability at least $1-c_0 s\sqrt h$, 
\begin{align*}
    \normb{T_{\bx} - Q_{\bx}}_{\TV } \leq \frac 1 6 \,.
\end{align*}
\end{proposition}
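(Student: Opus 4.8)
The plan is to apply the convex-function form~\eqref{eq:cvx_fn_projection} of the pointwise projection inequality (Theorem~\ref{thm:pointwise_projection}) with $\Phi(t) = t^p$ for a suitable integer $p \ge 1$, and then conclude by Markov's inequality; the logarithmic factor in $h$ is exactly what makes the optimized moment bound produce the polynomially small probability $c_0 s\sqrt h$. Write $L := \log(d\kappa/s)$ and note that $\log(1/(c_0 s\sqrt h)) \lesssim L$ for the stated step size. Since $t\mapsto t^p$ is convex and increasing on $\R_+$, \eqref{eq:cvx_fn_projection} (with $\bx\sim\pi$, $\by\sim\bar Q(\bx,\cdot)$) gives
\begin{align*}
    \E_\pi \norm{T_\bx - Q_\bx}_{\TV}^p
    &\le \frac{4^p}{2}\, \E_\pi \norm{\bar Q_\bx - Q_\bx}_{\TV}^p + \frac{2^p}{2}\, \E_{\pi\otimes\bar Q_\bx}\Bigl\lvert \frac{Q(\bx,\by)}{\bar Q(\bx,\by)} - 1\Bigr\rvert^p \,,
\end{align*}
and hence, by Markov, $\Pr_\pi(\norm{T_\bx - Q_\bx}_{\TV} > 1/6) \le 6^p\,\E_\pi\norm{T_\bx - Q_\bx}_{\TV}^p$. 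It therefore suffices to show that each of the two terms on the right-hand side is at most $(C c_1 \sqrt p/L)^p$ (up to absolute constants), since then taking $p \asymp L/\log L$ and $c_1$ small enough (depending only on $c_0$) makes $6^p$ times the sum at most $c_0 s\sqrt h$.

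The first term is elementary: the pointwise estimate of Lemma~\ref{lem:expectation_TV} gives $\norm{\bar Q_\bx - Q_\bx}_{\TV} \le \tfrac12 \beta h\sqrt{d + \beta^{2/3}\norm{\bx}^2}$, so
\begin{align*}
    4^p\,\E_\pi\norm{\bar Q_\bx - Q_\bx}_{\TV}^p
    &\le (2\beta h)^p\, \E_\pi {(d + \beta^{2/3}\norm{\bx}^2)}^{p/2}
    \lesssim \bigl( C\beta^{4/3} h \sqrt{(d + p)/\alpha}\,\bigr)^p\,,
\end{align*}
using the sub-Gaussian bound $\E_\pi\norm{\bx}^p \lesssim {(C(d+p)/\alpha)}^{p/2}$ (Lemma~\ref{lem:properties_of_strong_log_concave}). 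Since $\beta^{4/3}h/\sqrt\alpha = c_1/(\sqrt d\,L)$, this is at most $(C'c_1\sqrt p/L)^p$ when $p \lesssim d$; the complementary range $p \gtrsim d$ is handled by the same computation and is no worse.

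The second term is the heart of the proof, and this is where the Girsanov theorem (Lemma~\ref{lem:girsanov}) enters. Let $\mb Q_\bx$, $\bar{\mb Q}_\bx$ be the path-space measures of Lemma~\ref{lem:girsanov} and $W := \D\mb Q_\bx/\D\bar{\mb Q}_\bx = \exp Z$, where
\begin{align*}
    Z = \frac{1}{\sqrt 2}\int_0^h \langle\nabla V(\bar\bX_t) - \nabla V(\bx),\,\D\bB_t\rangle - \frac14\int_0^h\norm{\nabla V(\bar\bX_t) - \nabla V(\bx)}^2\,\D t\,.
\end{align*}
Because $Q_\bx = \ept_\#\mb Q_\bx$ and $\bar Q_\bx = \ept_\#\bar{\mb Q}_\bx$, the ratio of endpoint densities is the conditional expectation $Q(\bx,\by)/\bar Q(\bx,\by) = \E_{\bar{\mb Q}_\bx}[W \mid \ept = \by]$; combining this with $\lvert\E[W - 1\mid\ept]\rvert \le \E[\lvert W-1\rvert\mid\ept]$ and conditional Jensen for $t\mapsto t^p$ yields $\E_{\pi\otimes\bar Q_\bx}\lvert Q(\bx,\by)/\bar Q(\bx,\by) - 1\rvert^p \le \E_\pi\E_{\bar{\mb Q}_\bx}\lvert W - 1\rvert^p$. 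The remaining task is to bound $\E_{\bar{\mb Q}_\bx}\lvert e^Z - 1\rvert^p$. The steps are: (i) by $\beta$-smoothness, both the absolute value of the finite-variation part of $Z$ and the quadratic variation of its martingale part are $\le \tfrac{\beta^2}{2}\int_0^h\norm{\bar\bX_t - \bx}^2\,\D t$; (ii) a Gronwall argument together with maximal moment estimates for the Langevin diffusion (in the spirit of Lemma~\ref{lem:ito_distance}) controls $\sup_{t\le h}\norm{\bar\bX_t - \bx}$, so that $\int_0^h\norm{\bar\bX_t - \bx}^2\,\D t \lesssim h^2(d + \beta^{2/3}\norm{\bx}^2)$ off an event of negligible probability; (iii) working with the diffusion stopped at the exit time of a suitable ball (to make the quadratic variation almost surely bounded), the exponential martingale inequality and the elementary bound $\lvert e^z - 1\rvert \le \lvert z\rvert\,e^{\lvert z\rvert}$ give $\norm{W-1}_{L^p(\bar{\mb Q}_\bx)} \lesssim \sqrt p\,\nu(\bx)$ with $\nu(\bx) := \beta h\sqrt{d + \beta^{2/3}\norm{\bx}^2}$, provided $p\,\nu(\bx)^2 \lesssim 1$; on the exceptional set where $\norm{\bx}$ is atypically large, the crude bound $\lvert W - 1\rvert \le W + 1$ together with $\E_{\bar{\mb Q}_\bx}W = 1$ and the Gaussian tail of $\pi$ shows the contribution is negligible. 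Integrating over $\bx\sim\pi$ with the $L^p$-moment bound for $\norm{\bx}$, and checking $p\,\nu(\bx)^2 \lesssim 1$ for the stated $h$ and $p \asymp L/\log L$, gives $\E_\pi\E_{\bar{\mb Q}_\bx}\lvert W - 1\rvert^p \lesssim (Cc_1\sqrt p/L)^p$, as required.

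The main obstacle is step (iii): the martingale part of $\log W$ has a \emph{random}, not almost-surely-bounded, quadratic variation, so one cannot directly apply a Gaussian-tail bound to $Z$; the fix is to localize at the exit time of a ball and use the diffusion moment estimates with the correct $d$- and $\norm{\bx}$-dependence to control that quadratic variation on a high-probability event. Relatedly, $W = e^Z$ is of log-normal type, so its exponential moments are infinite — this is precisely why the argument must go through polynomial moments and an optimization over $p$, rather than a Chernoff bound. Once these estimates are in place, combining the two terms with $p \asymp \log(d\kappa/s)/\log\log(d\kappa/s)$ and $c_1$ small enough gives $\Pr_\pi(\norm{T_\bx - Q_\bx}_{\TV} > 1/6) \le c_0 s\sqrt h$.
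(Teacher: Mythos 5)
Your strategy is essentially the paper's: reduce everything via the pointwise projection property to the discretization error between $Q_{\bx}$ and the continuous-time kernel $\bar Q_{\bx}$, handle the first term by the pointwise TV bound of Lemma~\ref{lem:expectation_TV} plus moments of $\norm{\bx}$ under $\pi$, handle the second by passing to path space via Girsanov and bounding moments of $\lvert W-1\rvert$, and finish by Markov with a moment order tied to $\log(1/(s\sqrt h))$. The only structural differences are cosmetic: the paper bounds the two terms of \eqref{eq:pointwise_TV} separately (Lipschitz concentration for the first, Markov on the $k$-th moment of the second via Lemma~\ref{lem:moment_bdd}) rather than applying \eqref{eq:cvx_fn_projection} with $\Phi(t)=t^p$ to the full TV distance, and it bounds $\E\lvert e^{H_h}-1\rvert^k$ by It\^o's formula plus Burkholder--Davis--Gundy rather than by localization plus the exponential martingale inequality.

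Two points need repair. First, your optimization over $p$ is off. With the per-term bound ${(Cc_1\sqrt p/L)}^p$ and $p\asymp L/\log L$, the quantity $6^p\,\E_\pi\norm{T_{\bx}-Q_{\bx}}_{\rm TV}^p$ is only about $e^{-L/2}=\sqrt{s/(d\kappa)}$, which is \emph{larger} than $c_0 s\sqrt h$ whenever $s\lesssim d^{-1/2}$ (e.g.\ $s=d^{-10}$), so the claimed conclusion fails for small $s$. The fix is to take $p\asymp\log(1/(c_0 s\sqrt h))\asymp L$: then ${(c_0 s\sqrt h)}^{1/p}=\Theta(1)$, your moment bound becomes ${(Cc_1/\sqrt L)}^p$, and choosing $c_1$ small makes $6^p$ times it at most $e^{-2p}\le{(c_0 s\sqrt h)}^2$. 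This matches how the paper applies Markov with $k\sim\ln(2/(c_0 s\sqrt h))$ and $\delta^{-1/k}=\Theta(1)$. Second, your treatment of the exceptional set in step (iii) is incomplete: the bound $\lvert W-1\rvert\le W+1$ together with $\E_{\bar{\mb Q}_{\bx}}W=1$ controls only the \emph{first} moment, whereas you need $\E[\lvert W-1\rvert^p\one_{\rm bad}]$, which forces you to control $\E_{\bar{\mb Q}_{\bx}}[W^{2p}]=\E_{\bar{\mb Q}_{\bx}}[e^{2pZ}]$ --- a priori a log-normal-type quantity with no free bound. This is doable (Cauchy--Schwarz against the exponential martingale reduces it to the MGF of $\sup_{t\le h}\norm{\bar\bX_t-\bx}^2$, which is exactly the content of the paper's Lemmas~\ref{lem:exp_H_t} and~\ref{lem:sup_xt_control}), but it is precisely the nontrivial estimate of the argument and cannot be dismissed as crude bookkeeping.
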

The idea of the proof is to use the pointwise projection of Theorem~\ref{thm:pointwise_projection}, and to obtain high probability bounds for each of the two terms in \eqref{eq:pointwise_TV}. An upper bound for the first term follows directly from Lemma~\ref{lem:TV_concentration_1}. To control the second term, we will first obtain a bound on its moments.
\begin{lemma}\label{lem:moment_bdd}
    Let $k \ge 1$ be any integer.
    Suppose that
    \begin{align*}
        h
        \le  \frac{\alpha^{1/2} }{C\beta^{4/3} d^{1/2} k}\,,\qquad\text{for a sufficiently large absolute constant}~C > 0\,.
    \end{align*}
    Then, it holds that
    \begin{align*}
        \Bigl\{\E_{\bx \sim \pi}\Bigl[\Bigl\lvert\int \frac{\pi(\by) \bar Q(\by, \bx)}{\pi(\bx)} \, \bigl\lvert \frac{Q(\by,\bx)}{\vphantom{\big|}\bar Q(\by,\bx)} - 1\bigr\rvert \, \D \by \Bigr\rvert^k\Bigr]\Bigr\}^{1/k} \lesssim \alpha^{-1/4} \beta h \sqrt k \, (\sqrt d + \sqrt k)\,.
    \end{align*}
\end{lemma}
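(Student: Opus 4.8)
The plan is to reduce the left-hand side to an expectation involving the Girsanov density between the discretized Langevin proposal $Q$ and the continuous-time diffusion $\bar Q$, evaluated along a \emph{stationary} Langevin path, and then to invoke standard stochastic-calculus moment estimates. For the reduction, note first that since $\bar Q$ is $\pi$-reversible, for fixed $\bx$ the map $\by \mapsto \pi(\by)\bar Q(\by,\bx)/\pi(\bx)$ is a probability density. Applying Jensen's inequality to move the $k$-th power inside the $\by$-integral, then taking $\E_{\bx\sim\pi}$ and relabelling $\bx \leftrightarrow \by$ via reversibility --- exactly the manipulation used in the proof of Theorem~\ref{thm:pointwise_projection} --- bounds the left-hand side by $\E_{\bx\sim\pi,\,\by\sim\bar Q(\bx,\cdot)}\,\abs{Q(\bx,\by)/\bar Q(\bx,\by) - 1}^k$. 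Next, disintegrating the path measures of Lemma~\ref{lem:girsanov} over the endpoint, the endpoint density ratio satisfies $Q(\bx,\by)/\bar Q(\bx,\by) = \E_{\bar{\mb Q}_{\bx}}[Z \mid \bar\bX_h = \by]$, where $Z := \D\mb Q_{\bx}/\D\bar{\mb Q}_{\bx}$ is the path-space Radon--Nikodym derivative of Lemma~\ref{lem:girsanov}; conditional Jensen then gives
\begin{align*}
    \E_{\bx\sim\pi}\Bigl[\Bigl\lvert\int \tfrac{\pi(\by)\bar Q(\by,\bx)}{\pi(\bx)}\,\bigl\lvert\tfrac{Q(\by,\bx)}{\bar Q(\by,\bx)} - 1\bigr\rvert\,\D\by\Bigr\rvert^k\Bigr] \le \E_{\bx\sim\pi}\E_{\bar{\mb Q}_{\bx}}\abs{Z - 1}^k .
\end{align*}
The key structural observation is that when $\bx\sim\pi$, the path $(\bar\bX_t)_{t\in[0,h]}$ is at stationarity, so $\bar\bX_t\sim\pi$ for every $t$ and every path functional may be evaluated against $\pi$.

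To estimate $\E_{\bx\sim\pi}\E_{\bar{\mb Q}_\bx}\abs{Z-1}^k$, write $Z = \exp(N_h - \tfrac12\langle N\rangle_h)$ where $N_t := \tfrac1{\sqrt2}\int_0^t\langle\nabla V(\bar\bX_s) - \nabla V(\bx),\D\bB_s\rangle$ (so that $\langle N\rangle_t = \tfrac12\int_0^t\norm{\nabla V(\bar\bX_s) - \nabla V(\bx)}^2\,\D s$) and let $Z_t := \exp(N_t - \tfrac12\langle N\rangle_t)$ be the associated exponential martingale, so $Z_h = Z$ and $Z_h - 1 = \int_0^h Z_s\,\D N_s$. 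By the Burkholder--Davis--Gundy inequality and Doob's $L^{2k}$ maximal inequality,
\begin{align*}
    \norm{Z - 1}_{L^k} \lesssim \sqrt k\;\norm{\sup\nolimits_{s\le h}Z_s}_{L^{2k}}\;\norm{\langle N\rangle_h}_{L^k}^{1/2} \lesssim \sqrt k\;\bigl(\E e^{2k(N_h - \langle N\rangle_h/2)}\bigr)^{1/(2k)}\;\norm{\langle N\rangle_h}_{L^k}^{1/2} ,
\end{align*}
so two estimates remain: a polynomial moment bound on $\langle N\rangle_h$ and an exponential moment bound on $N_h - \tfrac12\langle N\rangle_h$.

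For the polynomial moment, $\beta$-smoothness and Minkowski's integral inequality give $\norm{\langle N\rangle_h}_{L^k} \lesssim \beta^2\int_0^h\norm{\bar\bX_s - \bx}_{L^{2k}}^2\,\D s$. The displacement $\bar\bX_s - \bx$ is controlled from the SDE~\eqref{eq:langevin_sde}: its Brownian part contributes $\lesssim\sqrt s\,(\sqrt d + \sqrt k)$ by Gaussian/$\chi^2$ tail bounds, and its drift part $-\int_0^s\nabla V(\bar\bX_u)\,\D u$ contributes at most $s\,\norm{\nabla V}_{L^{2k}(\pi)}$ by Minkowski and stationarity of $\bar\bX_u$. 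Finally $\norm{\nabla V}_{L^{2k}(\pi)}$ is handled by noting that $\bx\mapsto\norm{\nabla V(\bx)}$ is $\beta$-Lipschitz and hence sub-Gaussian under $\pi$ (Lemma~\ref{lem:properties_of_strong_log_concave}), with mean controlled by the integration-by-parts identity $\E_\pi\norm{\nabla V}^2 = \E_\pi[\Delta V] \le \beta d$. Assembling these estimates and using the hypothesis $h \le \alpha^{1/2}/(C\beta^{4/3}d^{1/2}k)$ to absorb the lower-order terms should yield $\norm{\langle N\rangle_h}_{L^k}^{1/2} \lesssim \alpha^{-1/4}\beta h\,(\sqrt d + \sqrt k)$ and hence the claim.

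The delicate point --- and what I expect to be the main obstacle --- is the factor $\bigl(\E e^{2k(N_h - \langle N\rangle_h/2)}\bigr)^{1/(2k)}$: $Z$ is an exponential martingale, and crude bounds on its moments diverge. The remedy is to use that $\exp(\pm 2kN_h - 2k^2\langle N\rangle_h)$ is a supermartingale of expectation at most $1$, so that with $W := N_h - \tfrac12\langle N\rangle_h$ one has $\E e^{\pm 2kW} \le \E[\exp(\pm2kN_h - 2k^2\langle N\rangle_h)\,e^{(2k^2\pm k)\langle N\rangle_h}]$, and then to combine this with the sub-exponential tail of $\langle N\rangle_h$ coming from the moment bounds above: the constraint $h \le \alpha^{1/2}/(C\beta^{4/3}d^{1/2}k)$ forces $\langle N\rangle_h = O(1/k^2)$ with overwhelming probability, making the exponential moment $O(1)$. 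This exponential-moment control, together with the careful bookkeeping of the $\alpha$- and $\beta$-powers (for which using $\E_\pi\norm{\nabla V}^2 = \E_\pi[\Delta V] \le \beta d$ in place of the cruder $\norm{\nabla V(\bx)}\le\beta\norm{\bx}$ is essential), is the crux; the polynomial moment bound on $\langle N\rangle_h$ is otherwise a routine application of Burkholder--Davis--Gundy, Gr\"onwall, and the concentration properties of strongly log-concave measures recorded in Lemma~\ref{lem:properties_of_strong_log_concave}.
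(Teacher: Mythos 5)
Your proposal is correct and follows essentially the same route as the paper's proof: the reduction via Jensen, reversibility, and the data-processing/conditional-Jensen step to the path-space quantity $\E\abs{Z-1}^k$; the It\^o representation $Z_h-1=\int_0^h Z_s\,\D N_s$ followed by Burkholder--Davis--Gundy to produce the $\sqrt k$ factor; the exponential-supermartingale Cauchy--Schwarz trick to control $\E e^{2kW}$ via the MGF of $\langle N\rangle_h$ (the paper's Lemma~\ref{lem:exp_H_t} and Lemma~\ref{lem:sup_xt_control}); and displacement moment bounds for $\sup_t\norm{\bar\bX_t-\bx}$. The only differences are cosmetic --- you use Doob's maximal inequality on $\sup_s Z_s$ where the paper uses H\"older on $\int_0^h\exp(4H_t)\,\D t$, and you propose exploiting stationarity and $\E_\pi\norm{\nabla V}^2=\E_\pi[\Delta V]$ where the paper bounds $\norm{\nabla V(\bx)}\le\beta\norm{\bx}$ pointwise and averages over $\bx\sim\pi$ at the end.
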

The proof, given in Appendix~\ref{sec:moment_bdd}, uses extensively tools from stochastic calculus. We remark that the quantity in Lemma~\ref{lem:moment_bdd} can be interpreted as a bound on the R\'enyi divergence between the discretized and continuous Langevin processes. A similar result has appeared as~\cite[Corollary 11]{ganeshtalwar2020renyi}.

We are now in a position to prove Proposition~\ref{prop:tv_concentration}.  

\medskip{}
\begin{proof}[Proof of Proposition~\ref{prop:tv_concentration}]
    Assume that the step size $h$ is small enough so that Lemmas~\ref{lem:TV_concentration_1} and~\ref{lem:moment_bdd} both hold. 
    More specifically, since the requirement of Lemma~\ref{lem:moment_bdd} is more stringent than that of Lemma~\ref{lem:TV_concentration_1}, so we can simply impose  $h\leq \frac{\alpha^{1/2} }{C\beta^{4/3} d^{1/2} k} $ for a sufficiently large absolute constant $C>0$.

  From Lemma~\ref{lem:TV_concentration_1} with $\delta = c_0 s\sqrt h/2$, there exists a constant $C_1 > 0$ such that with probability at least $1-c_0 s\sqrt h/2$,
    \begin{align*}
        \norm{\bar Q_{\bx} - Q_{\bx}}_{\rm TV}
        &\le \frac{C_1 \beta^{4/3} h}{2\sqrt\alpha} \sqrt{d + \ln \frac{2}{c_0 s\sqrt h}} \,.
    \end{align*}
    From Lemma~\ref{lem:moment_bdd} and Markov's inequality, there exists a constant $C_2 > 0$ such that for any $\delta > 0$, with probability at least $1-\delta$,
    \begin{align*}
        \int \frac{\pi(\by) \bar Q(\by, \bx)}{\pi(\bx)} \, \bigl\lvert \frac{Q(\by,\bx)}{\vphantom{\big|}\bar Q(\by,\bx)} - 1\bigr\rvert \, \D \by
        &\le C_2 \alpha^{-1/4} \beta h \sqrt k \, (\sqrt d + \sqrt k)\, \delta^{-1/k} \,.
    \end{align*}
    Taking $k \sim \ln \frac{2}{c_0 s \sqrt h}$ and $\delta = c_0 s\sqrt h/2$, we have $\delta^{-1/k} =\Theta(1)$ and hence
    \begin{align*}
        \int \frac{\pi(\by) \bar Q(\by, \bx)}{\pi(\bx)} \, \bigl\lvert \frac{Q(\by,\bx)}{\vphantom{\big|}\bar Q(\by,\bx)} - 1\bigr\rvert \, \D \by
        &\le C_2 \alpha^{-1/4} \beta h \sqrt{\ln \frac{2}{c_0 s\sqrt h}} \, \Bigl(\sqrt d + \sqrt{\ln \frac{2}{c_0 s\sqrt h}}\Bigr)\,.
    \end{align*}
    Combining these two inequalities with the pointwise projection property (Theorem~\ref{thm:pointwise_projection}), it follows that with probability at least $1-c_0 s\sqrt h$,
    \begin{align} \label{ineq:tv_concen_final}
        \norm{T_{\bx} - Q_{\bx}}_{\rm TV}
        &\le \frac{C_1 \beta^{4/3} h}{\sqrt\alpha} \sqrt{d + \ln \frac{2}{c_0 s\sqrt h}} + C_2 \alpha^{-1/4} \beta h \sqrt{\ln \frac{2}{c_0 s\sqrt h}} \, \Bigl(\sqrt d + \sqrt{\ln \frac{2}{c_0 s\sqrt h}}\Bigr) \,.
    \end{align}
    If we choose the constant $c_1 > 0$ small enough, then choosing the step size as in the statement of Proposition~\ref{prop:tv_concentration}, i.e., $h= \frac{c_1 \alpha^{1/2}}{\beta^{4/3} d^{1/2} \log(d\kappa/s)}$, makes the both terms in the left-hand side of \eqref{ineq:tv_concen_final} less than $1/12$. This completes the proof of Proposition~\ref{prop:tv_concentration}. 
\end{proof}

\subsubsection{Proof of Lemma~\ref{lem:moment_bdd}}\label{sec:moment_bdd}
We now prove the moment upper bound (Lemma~\ref{lem:moment_bdd}). Since $\int \pi(\by)\bar Q(\by,\bx) \, \D \by = \pi(\bx)$, we can apply Jensen's inequality to get
\begin{align*}
    \int \pi(\bx) \, \Bigl\lvert \int \frac{\pi(\by) \bar Q(\by,\bx)}{\pi(\bx)} \, \bigl\lvert \frac{Q(\by,\bx)}{\vphantom{\big|}\bar Q(\by,\bx)} - 1 \bigr\rvert \, \D \by \Bigr\rvert^k \, \D \bx
    & \le \iint \pi(\by) \bar Q(\by,\bx) \, \bigl\lvert \frac{Q(\by,\bx)}{\vphantom{\big|}\bar Q(\by,\bx)} - 1 \bigr\rvert^k \, \D \bx \, \D \by \\
    & = \int \Bigl( \int \bigl\lvert \frac{Q(\bx,\by)}{\vphantom{\big|}\bar Q(\bx,\by)} - 1 \bigr\rvert^k \, \bar Q(\bx, \D \by) \Bigr) \, \pi(\D \bx)\,,
\end{align*}
where we switched $\bx$ and $\by$ in the last line.
The inner integral equals the $f$-divergence $D_f(Q_{\bx} \mmid \bar Q_{\bx})$, with $f(\bx) := \abs{x-1}^k$.
Recall the definitions of $\bar{\mb Q}_{\bx}$ and $\mb Q_{\bx}$ in Lemma~\ref{lem:girsanov}.
Hence we may apply the data processing inequality and bound the above by
\begin{align}\label{exp:k_moment}
  F_k:=  \int \Bigl( \int \bigl\lvert \frac{\D \mb Q_{\bx}}{\vphantom{\big|}\D \bar{\mb Q}_{\bx}} - 1 \bigr\rvert^k \, \D \bar{\mb Q}_{\bx} \Bigr) \, \pi(\D \bx)\,.
\end{align}
Recall from Lemma~\ref{lem:girsanov} that 
\begin{align*}
    \frac{\D \mb Q_{\bx}}{\vphantom{\big|}\D \bar{\mb Q}_{\bx}}(\bar\bX)
    &= \exp H_h\,,
\end{align*}
where for $t\ge 0$,
\begin{align*}
    H_t
    &:= \frac{1}{\sqrt 2} \int_0^t\langle \nabla V(\bar \bX_s) - \nabla V(\bx), \D \bB_s \rangle - \frac{1}{4} \int_0^t \norm{\nabla V(\bar \bX_s) - \nabla V(\bx)}^2 \, \D s\,.
\end{align*} 
Applying It\^o's formula to ${(H_t)}_{t\ge 0}$ and the function $\exp$, we deduce that
\begin{align*}
    \exp H_h - 1
    &= \frac{1}{\sqrt 2} \int_0^h (\exp H_t) \, \langle \nabla V(\bar \bX_t) - \nabla V(\bx), \D \bB_t\rangle.
\end{align*}
In what follows, $\bE_{\bx}$ denotes the expectation under $\bar{\mb Q}_{\bx}$ (the measure under which $\bar{\bX}$ is a continuous-time Langevin diffusion).
Also, we will use the letter $C$ to denote a numerical constant which may change from line to line.
Based on the upper bound \eqref{exp:k_moment} on the $k$-th moment, we wish to estimate
\begin{align*}
  F_k&=\bE_{\bx}[\abs{\exp H_h - 1}^k]
    = \frac{1}{2^{k/2}} \bE_{\bx}\Bigl[ \Bigl\lvert \int_0^h (\exp H_t) \, \langle \nabla V(\bar \bX_t) - \nabla V(\bx), \D \bB_t\rangle \Bigr\rvert^k \Bigr] \\
    &\le {(Ck)}^{k/2} \bE_{\bx}\Bigl[\Bigl\lvert \int_0^h \exp(2H_t) \, \norm{\nabla V(\bar\bX_t) - \nabla V(\bx)}^2  \, \D t \Bigr\rvert^{k/2}\Bigr]\\
    \intertext{where the last line is the Burkholder-Davis-Gundy inequality with optimal constants \citep{burkholderDistributionFunctionInequalities1973, davis1976lpnorm}. Together with the Cauchy-Schwarz inequality and H\"older's inequality, it yields}
F_k
    &\le {(C\beta^2 k)}^{k/2} \bE_{\bx}\Bigl[ \Bigl\lvert \int_0^h \exp(4H_t) \, \D t \Bigr\rvert^{k/4} \, \Bigl\lvert \int_0^h \|\bar \bX_t - \bx\|^4 \, \D t\Bigr\rvert^{k/4}\Bigr] \\
    &\le {(C\beta^2 k)}^{k/2} \sqrt{\bE_{\bx}\Bigl[\Bigl\lvert \int_0^h \exp(4H_t) \, \D t \Bigr\rvert^{k/2}\Bigr] \bE_{\bx}\Bigl[ \Bigl\lvert \int_0^h \norm{\bar \bX_t - \bx}^4 \, \D t \Bigr\rvert^{k/2}\Bigr]} \\
    &\le {(C\beta^2 k)}^{k/2} \, h^{k/2-1} \underbrace{\sqrt{\Bigl(\bE_{\bx}\int_0^h \exp(2kH_t) \, \D t \Bigr)}}_{\cAA} \,\underbrace{\sqrt{\Bigl(\bE_{\bx} \int_0^h \norm{\bar \bX_t - \bx}^{2k} \, \D t \Bigr)}}_{\cBB}\,.
\end{align*}
We will control the two terms separately, starting with the first term \tcAA.

\begin{lemma}\label{lem:exp_H_t}
    Let $0 \le t \le h \le 1/(20\beta k)$.
    Then,
    \begin{align*}
        \bE_{\bx} \exp(2kH_t) \le \exp(96\beta^4 h^3 k^2 \, \norm{\bx}^2 + 576 \beta^2 dh^2 k^2).
    \end{align*}
\end{lemma}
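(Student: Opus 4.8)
The plan is to bound $\bE_{\bx}\exp(2kH_t)$ by factoring out a stochastic exponential, which has expectation at most $1$ with no integrability hypothesis, thereby reducing the claim to a Gr\"onwall estimate for the diffusion and a maximal inequality for Brownian motion. Write $u_s := \nabla V(\bar\bX_s) - \nabla V(\bx)$, so that under $\bar{\mb Q}_{\bx}$ we have $2kH_t = \sqrt2\,k\int_0^t\langle u_s,\D\bB_s\rangle - \tfrac k2\int_0^t\norm{u_s}^2\,\D s$. First I would discard the nonpositive drift term, $\exp(2kH_t)\le \exp(\sqrt2\,k\int_0^t\langle u_s,\D\bB_s\rangle)$, and complete the square: with $M_t := 2\sqrt2\,k\int_0^t\langle u_s,\D\bB_s\rangle$, so that $\langle M\rangle_t = 8k^2\int_0^t\norm{u_s}^2\,\D s$, there is the pointwise identity
\begin{align*}
    \exp\bigl(\sqrt2\,k\int_0^t\langle u_s,\D\bB_s\rangle\bigr) = \bigl[\exp\bigl(M_t - \tfrac12\langle M\rangle_t\bigr)\bigr]^{1/2}\,\exp\bigl(2k^2\int_0^t\norm{u_s}^2\,\D s\bigr).
\end{align*}
Since a nonnegative local martingale is a supermartingale, $\bE_{\bx}\exp(M_t - \tfrac12\langle M\rangle_t)\le 1$, so the Cauchy--Schwarz inequality applied to the product gives $\bE_{\bx}\exp(2kH_t) \le \bigl(\bE_{\bx}\exp(4k^2\int_0^t\norm{u_s}^2\,\D s)\bigr)^{1/2}$.

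It remains to bound this last exponential moment. By $\beta$-smoothness, $\norm{u_s}\le\beta\,\norm{\bar\bX_s-\bx}$, and since $t\le h$ we get $\int_0^t\norm{u_s}^2\,\D s\le \beta^2 h\sup_{0\le s\le h}\norm{\bar\bX_s-\bx}^2$. To control the supremum I would integrate the SDE, $\bar\bX_s - \bx = -\int_0^s\nabla V(\bar\bX_r)\,\D r + \sqrt2\,\bB_s$, bound $\norm{\nabla V(\bar\bX_r)}\le\beta\norm{\bar\bX_r}\le\beta(\norm{\bar\bX_r-\bx}+\norm{\bx})$ using $\nabla V(\bs 0)=\bs 0$, and apply Gr\"onwall's lemma to $\phi(t):=\sup_{s\le t}\norm{\bar\bX_s-\bx}$. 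Writing $Z:=\sup_{0\le s\le h}\norm{\bB_s}$, this gives $\phi(h)\le(\beta h\norm{\bx}+\sqrt2\,Z)\,e^{\beta h}\le 2(\beta h\norm{\bx}+\sqrt2\,Z)$ since $h\le 1/(20\beta k)\le 1/(20\beta)$, hence $\sup_{s\le h}\norm{\bar\bX_s-\bx}^2\le 8\beta^2h^2\norm{\bx}^2 + 16Z^2$ and
\begin{align*}
    \bE_{\bx}\exp\bigl(4k^2\int_0^t\norm{u_s}^2\,\D s\bigr)\le \exp(32\beta^4 h^3 k^2\norm{\bx}^2)\;\bE_{\bx}\exp(64\beta^2 h k^2 Z^2).
\end{align*}
For the final factor I would use $Z^2\le\sum_{i=1}^d(\sup_{s\le h}\abs{B_s^i})^2$ and independence of the coordinates to reduce to one dimension; the reflection principle and the Gaussian tail give that $\sup_{s\le h}\abs{B_s^1}$ exceeds $a$ with probability at most $4\exp(-a^2/(2h))$, and a layer-cake integration against $e^{\lambda a^2}$ yields $\bE\exp(\lambda(\sup_{s\le h}\abs{B_s^1})^2)\le\exp(16\lambda h)$ whenever $\lambda h\le 1/4$. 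With $\lambda = 64\beta^2 h k^2$ the condition $\lambda h = 64\beta^2 h^2 k^2\le 1/4$ holds because $h\le 1/(20\beta k)$, so $\bE_{\bx}\exp(64\beta^2 h k^2 Z^2)\le\exp(1024\beta^2 d h^2 k^2)$. Chaining the three displays and taking the square root gives $\bE_{\bx}\exp(2kH_t)\le\exp(16\beta^4 h^3 k^2\norm{\bx}^2 + 512\beta^2 d h^2 k^2)$, which implies the stated bound with constants $96$ and $576$.

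The only step I expect to be non-routine is the treatment of the stochastic integral: one cannot push the expectation through the exponential, and I want to avoid imposing a Novikov-type condition that would be awkward to verify uniformly in $\bx$ and $k$. Factoring out a stochastic exponential (at the price of a factor $2$ in the martingale coefficient, absorbed by Cauchy--Schwarz) resolves this cleanly. Everything else is standard; the only bookkeeping is tracking constants carefully enough that the Gr\"onwall step and the sub-exponential moment bound both go through under the single hypothesis $h\le 1/(20\beta k)$, and that the resulting constants fit inside $96$ and $576$.
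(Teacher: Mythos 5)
Your proof is correct and follows essentially the same route as the paper: a Cauchy--Schwarz step against a Dol\'eans--Dade stochastic exponential to eliminate the stochastic integral, then reducing $\int_0^t \norm{\nabla V(\bar\bX_s)-\nabla V(\bx)}^2\,\D s$ to $\sup_{s\in[0,h]}\norm{\bar\bX_s-\bx}^2$ via $\beta$-smoothness, and controlling its moment generating function by Gr\"onwall plus the reflection principle (the paper isolates these last two steps as Lemmas~\ref{lem:sup_xt_control} and~\ref{lem:Brownian_MGF}). Discarding the nonpositive drift before completing the square, and appealing to the supermartingale property of the stochastic exponential rather than asserting its expectation equals one, are minor refinements that do not change the argument.
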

\begin{proof}
Recall the following fact, which follows from It\^o's lemma~\citep[Theorem 5.10]{legall2016stochasticcalc}: for any adapted process ${(\bs Z_s)}_{s\ge 0}$, we have 
$$
\bE_{\bx} \exp(\int_0^t  \langle \bs Z_s, \D \bB_s \rangle - \frac{1}{2} \int_0^t\|\bs Z_s\|^2 \, \D s) = 1\,.
$$
Together with the Cauchy-Schwarz inequality, it yields
\begin{align*}
    &\bE_{\bx}\exp(2 k H_t)\\
    &\qquad = \bE_{\bx} \exp\Bigl[\sqrt 2 k \int_0^t\langle \nabla V(\bar \bX_s) - \nabla V(\bx), \D \bB_s \rangle - \frac{k}{2} \int_0^t \norm{\nabla V(\bar \bX_s) - \nabla V(\bx)}^2 \, \D s\Bigr] \\
    &\qquad =\bE_{\bx} \exp\Bigl[\sqrt 2 k \int_0^t\langle \nabla V(\bar \bX_s) - \nabla V(\bx), \D \bB_s \rangle  \\
    &\qquad\qquad\qquad\qquad\qquad{}+ \bigl( - 4k^2 + 4k^2 - \frac{k}{2}\bigr) \int_0^t \norm{\nabla V(\bar \bX_s) - \nabla V(\bx)}^2 \, \D s\Bigr] \\
    &\qquad \le \sqrt{\bE_{\bx} \exp\Bigl[8k^2 \int_0^t \norm{\nabla V(\bar \bX_s) - \nabla V(\bx)}^2 \, \D s \Bigr]} \\
    &\qquad \le \sqrt{\bE_{\bx} \exp\Bigl[8\beta^2 k^2 \int_0^t \norm{\bar \bX_s - \bx}^2 \, \D s \Bigr]}
    \le \sqrt{\bE_{\bx} \exp\Bigl[8\beta^2 hk^2 \sup_{s\in [0,h]} \norm{\bar \bX_s - \bx}^2 \Bigr]}\,.
\end{align*}
In order to upper bound the above quantity, we develop the following bound on the moment generating function of $\sup_{s\in [0,h]} \norm{\bar \bX_s - \bx}^2$.

\begin{restatable}{lemma}{lemsupcontrol}
\label{lem:sup_xt_control}
    Assume $h \le 1/(2\beta)$.
    For $0 < \lambda < 1/(24h)$,
    \begin{align*}
        \bE_{\bx} \exp\bigl(\lambda \sup_{t\in [0,h]}{\norm{\bar\bX_t - \bx}^2}\bigr)
        &\le \exp\bigl(12\beta^2 \lambda h^2 \, \norm{\bx}^2 + d\ln \frac{1+24h\lambda}{1-24h\lambda}\bigr).
    \end{align*}
\end{restatable}
\begin{proof}
    The proof is deferred to \S\ref{sec:pf_aux_stochastic}.
\end{proof}
We use  Lemma~\ref{lem:sup_xt_control} with $\lambda := 8\beta^2 hk^2$. In order to satisfy the preconditions of  Lemma~\ref{lem:sup_xt_control}, we impose the restriction $h \le \frac{1}{14\beta k}$. Then, it follows that
\begin{align*}
    \bE_{\bx}\exp(2 k H_t)
    &\le \exp\bigl(96\beta^4  h^3 k^2 \, \norm{\bx}^2 + d\ln \frac{1+192\beta^2 h^2 k^2}{1 - 192\beta^2 h^2 k^2}\bigr) \\
    &\le \exp(96\beta^4 h^3 k^2 \, \norm{\bx}^2 + 576 \beta^2 dh^2 k^2) \,,
\end{align*}
where the last inequality is $\ln \frac{1+x}{1-x} \le 3x$, which holds provided $x \le 1/2$; this is valid provided $h\le \frac{1}{20\beta k}$.
This is our desired bound.
\end{proof}

Hence, from Lemma~\ref{lem:exp_H_t}, we obtain
\begin{align*}
    \ncAA \leq  \sqrt{h\exp(96\beta^4 h^3 k^2 \, \norm{\bx}^2 + 576 \beta^2 dh^2 k^2)}\,.
\end{align*}

Next, we estimate \tcBB. In fact,  Lemma~\ref{lem:sup_xt_control} together with standard moment bounds under sub-exponential concentration (e.g.~\cite[Proposition 2.7.1]{vershynin2018highdimprob}) gives
\begin{align*}
        \bE_{\bx} \sup_{t\in [0,h]}{\norm{\bar \bX_t - \bx}^{2k}}
        &\le C^k \, (\beta^k h^{2k} \, \norm{\bx}^{2k} + d^k h^k + h^k k^k)\,,
\end{align*}
where $C>0$ is a numerical constant. See   Corollary~\ref{cor:moments_of_langevin}  in \S\ref{sec:pf_aux_stochastic} for details. Hence, it holds that 
\begin{align*} 
    \ncBB=\int_0^h \bE_{\bx}[\|\bar \bX_t - \bx\|^{2k}] \, \D t \le C^k h \, (\beta^k h^{2k} \, \norm{\bx}^{2k} + d^k h^k + h^k k^k). 
\end{align*}
Hence,
\begin{align*}
    \eqref{exp:k_moment} &\leq {(C\beta^2 k)}^{k/2}  h^{k/2-1} \times\ncAA \times \ncBB\\ 
    &\le {(C\beta^2 k)}^{k/2}  h^{k/2-1} \times h^{1/2}\exp(48\beta^4 h^3 k^2 \, \norm{\bx}^2 + 288 \beta^2 dh^2 k^2)  \\
    &\qquad\qquad{} \times   \sqrt{C^k h \, (\beta^k h^{2k} \, \norm{\bx}^{2k} + d^k h^k + h^k k^k)} \\
 &\le {(C^2\beta^2 h k)}^{k/2}  \exp( 288 \beta^2 dh^2 k^2)  \\
    &\qquad\qquad{} \times \exp(48\beta^4 h^3 k^2\,\norm{\bx}^2)  \sqrt{C^k h \, (\beta^k h^{2k} \, \norm{\bx}^{2k} + d^k h^k + h^k k^k)}.
\end{align*}
Next, we take the expectation w.r.t.\ $\bx \sim \pi$ and use Cauchy-Schwarz:
\begin{align*}
    &\E_{\bx \sim \pi}\bE_{\bx}[\abs{\exp H_h - 1}^k] \\
    &\qquad \le {(C\beta^2 hk)}^{k/2} \exp(288\beta^2 dh^2 k^2) \\
    &\qquad\qquad\qquad\times{} \sqrt{\E_{\bx \sim \pi}\exp(96\beta^4 h^3 k^2 \, \norm{\bx}^2) \E_{\bx \sim \pi}[\beta^k h^{2k} \, \norm{\bx}^{2k} + d^{k} h^{k} + h^{k} k^{k}]} \,.
\end{align*}
For the two terms involving exponentials: the first will be bounded by a numerical constant provided that $h \le \frac{1}{C\beta k \sqrt d}$, and using concentration properties of $\pi$ (see e.g.\ Lemma~\ref{lem:properties_of_strong_log_concave}), the second will be bounded provided $h \le \frac{\alpha^{1/3}}{C \beta^{4/3} d^{1/3} k^{2/3}}$. Taking this to be the case, the moment bounds in Lemma~\ref{lem:properties_of_strong_log_concave} now imply the bound
\begin{align*}
    & \E_{\bx \sim \pi}\bE_{\bx}[\abs{\exp H_h - 1}^k] \\
    &\qquad \le {(C\beta^2 hk)}^{k/2} \times (\alpha^{-k/2} \beta^{k/2} d^{k/2} h^k + \alpha^{-k/2} \beta^{k/2} h^k k^{k/2} + d^{k/2} h^{k/2} + h^{k/2} k^{k/2}) \,.
\end{align*}
Taking $k$-th roots,
\begin{align*}
    &{(\E_{\bx \sim \pi}\bE_{\bx}[\abs{\exp H_h - 1}^k])}^{1/k} \\
    &\qquad \lesssim \beta \sqrt{hk} \times (\alpha^{-1/2} \beta^{1/2} d^{1/2} h + \alpha^{-1/2} \beta^{1/2} h k^{1/2} + d^{1/2} h^{1/2} + h^{1/2} k^{1/2}) \\
    &\qquad \lesssim \alpha^{-1/4} \beta h \sqrt k \, (\sqrt d + \sqrt k),
\end{align*}
provided that $h \le \alpha^{1/2}/\beta$. 
This concludes the proof.


\subsection{Conductance argument}\label{scn:conductance_argument}

In this section, we use the results from the previous sections in order to prove a lower bound on the $s$-conductance.
The argument is similar to the proof of the standard conductance lemma (Lemma~\ref{lem:standard_conductance_lem}).

Towards the goal of applying the bound on the mixing time via $s$-conductance given in Corollary~\ref{cor:mixing_from_s_conductance}, we take $s := \varepsilon/(2M_0)$, and we choose the step size
\begin{align}\label{eq:cond_arg_step_size}
    h
    &= \frac{c_1 \alpha^{1/2}}{\beta^{4/3} d^{1/2} \log(d\kappa/s)}
\end{align}
as in Proposition~\ref{prop:tv_concentration}.
Then, Proposition~\ref{prop:tv_concentration} guarantees the existence of an event $\eve$ with probability $\pi(\eve) \ge 1-c_0 s\sqrt h$ such that
\begin{align*}
    \bx \in \eve \implies \normb{T_{\bx} - Q_{\bx}}_{\TV } \leq \frac 1 6\,.
\end{align*}

Let $S$ be a measurable subset of $\R^d$ with $s \le \pi(S) \le 1/2$.
Define the following subsets:
\begin{align*}
    S_1
    &:= \bigl\{\bx\in S \bigm\vert T(\bx, S^\comp) \le \frac{1}{4}\bigr\}\,,  &\text{bad set 1}\\
    S_2
    &:= \bigl\{\bx\in S^\comp \bigm\vert T(\bx,S) \le \frac{1}{4} \bigr\}\,, &\text{bad set 2}\\
    S_3&:= {(S_1\cup S_2)}^\comp. &\text{good set}
\end{align*}

If $\pi(S_1) < \pi(S)/2$ or $\pi(S_2) < \pi(S^\comp)/2$, then may conclude from reversibility of the MALA kernel $T$ that
\begin{align*}
    \int_S T(\bx, S^\comp) \, \pi(\D \bx)
    &= \frac{1}{2} \Bigl(\int_S T(\bx, S^\comp) \, \pi(\D \bx) + \int_{S^\comp} T(\bx, S) \, \pi(\D \bx)\Bigr)
    \ge \frac{1}{2} \cdot \frac{\pi(S)}{2} \cdot \frac{1}{4}
    = \frac{\pi(S)}{16} \,.
\end{align*}
Therefore, for the purpose of proving a lower bound on the $s$-conductance, we may assume that $\pi(S_1) \wedge \pi(S_2) \ge \pi(S)/2$.

Now we consider $\bx\in E \cap S_1$ and $\by\in E \cap S_2$.
From the definitions of $S_1$ and $S_2$, it follows that
\[\norm{T_{\bx} - T_{\by}}_{\rm TV} \geq \frac 1 2\,.\]
Since $\bx,\by\in E$, we also have
\[\norm{T_{\bx} - Q_{\bx}}_{\rm TV} \wedge \norm{T_{\by} - Q_{\by}}_{\rm TV} \le \frac 1 6\,.\]
Thus, using the decomposition~\eqref{eq:TV_decomposition},
\begin{align*}
    \frac 1 2 
    &\leq \norm{T_{\bx} - T_{\by}}_{\rm TV}
    \leq \norm{T_{\bx} - Q_{\bx}}_{\rm TV} + \norm{Q_{\bx} - Q_{\by}}_{\rm TV} + \norm{T_{\by} - Q_{\by}}_{\rm TV}\\
    &\leq \frac 1 6 + \frac{\norm{\bx - \by}}{\sqrt{2h}} + \frac 1 6\,,
\end{align*}
where the middle term is controlled via
\begin{align*}
    \norm{Q_{\bx} - Q_{\by}}_{\rm TV}
    &\le \frac{\norm{\bx-\by}}{\sqrt{2h}}\,, \qquad\text{if}~h \le \frac{2}{\beta}\,,
\end{align*}
see~\cite[Lemma 3]{dwivedi2019log}.
Hence, we obtain:
\begin{align*}
    \frac{\sqrt{2h}}{6} &\leq \norm{\bx-\by}\,,
\end{align*}
which implies that $\operatorname{dist}(E \cap S_1, E\cap S_2) \geq \sqrt{2h}/6$. By the isoperimetric inequality (see Lemma~\ref{lem:properties_of_strong_log_concave}), there is an absolute constant $c > 0$ such that
\begin{align*}
    \pi\bigl({[(E\cap S_1)\cup (E \cap S_2)]}^\comp\bigr) \geq \frac{c \sqrt 2}{6} \sqrt{\alpha h} \,\pi(E \cap S_1)\,.
\end{align*}
Since $S_1$, $S_2$, and $S_3$ partition $\R^d$, we see that ${((E \cap S_1)\cup (E \cap S_2))}^\comp = E^\comp \cap S_3$. As a result,
\begin{align}
    \pi(S_3) + c_0 s \sqrt{\alpha h}
    &\ge \pi(S_3) + \pi(E^\comp)
    \ge \frac{c \sqrt 2}{6} \sqrt{\alpha h} \,\pi(E \cap S_1) \nonumber \\
    &\ge \frac{c \sqrt 2}{6} \sqrt{\alpha h} \, \{\pi(S_1) - \pi(E^\comp)\} \nonumber \\
    &\ge \frac{c \sqrt 2}{6} \sqrt{\alpha h} \, \bigl\{\frac{\pi(S)}{2} - \pi(E^\comp)\bigr\} \nonumber \\
    &\ge \frac{c \sqrt 2}{12} \sqrt{\alpha h} \, \pi(S) \label{eq:cond_arg} \,,
\end{align}
where~\eqref{eq:cond_arg} follows since $\pi(S)/2 \ge s/2 \ge 2c_0 s\sqrt h \ge 2\pi(E^\comp)$ provided that $c_0 \sqrt h \le 1/4$.

Since $\pi(S) \ge s$, it follows that, provided we choose $c_0$ small enough (and thus, the constant $c_1$ in the step size~\eqref{eq:cond_arg_step_size} small enough), we obtain
\begin{align*}
    \pi(S_3)
    &\ge \frac{c \sqrt 2}{24} \sqrt{\alpha h} \, \pi(S)\,.
\end{align*}
From this,
\begin{align*}
    \int_S T(\bx, S^\comp) \, \pi(\D \bx)
    &= \frac{1}{2} \Bigl(\int_S T(\bx, S^\comp) \, \pi(\D \bx) + \int_{S^\comp} T(\bx, S) \, \pi(\D \bx)\Bigr) \\
    &\ge \frac{1}{2} \cdot \frac{1}{4} \cdot \pi(S_3)
    \ge \frac{c\sqrt 2}{192} \sqrt{\alpha h} \, \pi(S)\,.
\end{align*}

Collecting the arguments, we obtain a lower bound on the $s$-conductance.

\begin{proposition}\label{prop:s_cond}
    If the step size $h$ is chosen as~\eqref{eq:cond_arg_step_size} for a sufficiently small constant $c_1$, then the $s$-conductance of the MALA chain satisfies
    \begin{align*}
        \msf C_s
        \gtrsim \sqrt{\alpha h} \,.
    \end{align*}
\end{proposition}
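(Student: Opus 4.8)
The plan is to lower bound the $s$-conductance directly from its definition~\eqref{eq:C_s}: fix a measurable set $S$ with $s < \pi(S) \le 1/2$ and aim to show that the escaping mass satisfies $\int_S T(\bx,S^\comp)\,\pi(\D\bx) \gtrsim \sqrt{\alpha h}\,\pi(S)$, which suffices since $\pi(S) \ge \pi(S)-s$. The argument runs parallel to the proof of the standard conductance lemma (Lemma~\ref{lem:standard_conductance_lem}), with one crucial modification: in place of a \emph{uniform} smallness bound on $\norm{T_\bx - Q_\bx}_{\rm TV}$, we only have the \emph{high-probability} control $\norm{T_\bx - Q_\bx}_{\rm TV} \le 1/6$ valid on an event $E$ with $\pi(E) \ge 1 - c_0 s\sqrt h$, as supplied by Proposition~\ref{prop:tv_concentration} (with step size chosen as in~\eqref{eq:cond_arg_step_size} for a small enough $c_1$). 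The whole proof then reduces to carrying the exceptional set $E^\comp$ through the estimates and checking that its mass is negligible.

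First I would introduce the usual ``bad set / good set'' partition $S_1 := \{\bx \in S : T(\bx,S^\comp)\le 1/4\}$, $S_2 := \{\bx \in S^\comp : T(\bx,S)\le 1/4\}$, and $S_3 := (S_1\cup S_2)^\comp$. If $\pi(S_1) < \pi(S)/2$ or $\pi(S_2) < \pi(S^\comp)/2$, then the complementary side escapes with probability $\ge 1/4$ on a set of mass $\ge \pi(S)/2$, and symmetrizing $\int_S T(\bx,S^\comp)\,\pi$ with $\int_{S^\comp} T(\bx,S)\,\pi$ via reversibility of $T$ already yields $\int_S T(\bx,S^\comp)\,\pi(\D\bx) \ge \pi(S)/16$, which far exceeds $\sqrt{\alpha h}\,\pi(S)$ for small $h$. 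So we may assume $\pi(S_1) \ge \pi(S)/2$ and $\pi(S_2)\ge \pi(S^\comp)/2$.

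Next, for $\bx \in E\cap S_1$ and $\by \in E\cap S_2$ we have $\norm{T_\bx - T_\by}_{\rm TV} \ge T_\by(S^\comp) - T_\bx(S^\comp) \ge 3/4 - 1/4 = 1/2$, while the triangle-inequality decomposition~\eqref{eq:TV_decomposition}, combined with $\norm{T_\bx - Q_\bx}_{\rm TV},\,\norm{T_\by - Q_\by}_{\rm TV} \le 1/6$ (since $\bx,\by\in E$) and the Gaussian estimate $\norm{Q_\bx - Q_\by}_{\rm TV} \le \norm{\bx-\by}/\sqrt{2h}$ from~\cite[Lemma 3]{dwivedi2019log}, forces $\norm{\bx-\by}\ge \sqrt{2h}/6$. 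Hence $\operatorname{dist}(E\cap S_1,\, E\cap S_2) \ge \sqrt{2h}/6$, and the isoperimetric inequality for strongly log-concave measures (Lemma~\ref{lem:properties_of_strong_log_concave}) gives $\pi\bigl((E\cap S_1)\cup(E\cap S_2)\bigr)^\comp \gtrsim \sqrt{\alpha h}\,\pi(E\cap S_1)$. Since $(E\cap S_1)\cup(E\cap S_2) = E \cap (S_1\cup S_2)$, its complement is $E^\comp \cup S_3$, so $\pi(S_3) + \pi(E^\comp) \gtrsim \sqrt{\alpha h}\,\pi(E\cap S_1) \ge \sqrt{\alpha h}\,\bigl(\pi(S_1) - \pi(E^\comp)\bigr) \ge \sqrt{\alpha h}\,\bigl(\pi(S)/2 - \pi(E^\comp)\bigr)$. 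Using $\pi(E^\comp)\le c_0 s\sqrt h$ and $\pi(S)\ge s$, taking $c_0$ (equivalently $c_1$) small enough absorbs all the $\pi(E^\comp)$ terms and yields $\pi(S_3)\gtrsim \sqrt{\alpha h}\,\pi(S)$. Finally, every point of $S_3\cap S$ escapes to $S^\comp$ with probability $>1/4$ and every point of $S_3\cap S^\comp$ escapes to $S$ with probability $>1/4$, so symmetrizing once more via reversibility gives $\int_S T(\bx,S^\comp)\,\pi(\D\bx) \ge \tfrac18\,\pi(S_3) \gtrsim \sqrt{\alpha h}\,\pi(S)$, which is the claimed bound.

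The main obstacle, and essentially the only place requiring genuine care, is the bookkeeping around $E^\comp$. Because Proposition~\ref{prop:tv_concentration} controls $\norm{T_\bx - Q_\bx}_{\rm TV}$ only off a set of small mass, one cannot simply invoke Lemma~\ref{lem:standard_conductance_lem}; instead every set above must be intersected with $E$, and one must verify that $\pi(E^\comp)\le c_0 s\sqrt h$ is small compared both to $\sqrt{\alpha h}\,\pi(S)$ (using $\pi(S)\ge s$) and to $\pi(S)/2$ inside the chain $\pi(E\cap S_1)\ge \pi(S_1)-\pi(E^\comp)\ge \pi(S)/2-\pi(E^\comp)$. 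This is exactly why the refined $s$-conductance~\eqref{eq:C_s} rather than ordinary conductance is the right object, and why the constants $c_0$ and $c_1$ in the step size must be tuned; once one commits to tracking these error terms the remaining manipulations are routine.
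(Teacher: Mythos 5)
Your proof is correct and follows essentially the same route as the paper's: the same $S_1,S_2,S_3$ partition with the easy case handled by reversibility, the same use of the high-probability event $E$ from Proposition~\ref{prop:tv_concentration} inside the decomposition~\eqref{eq:TV_decomposition} to get $\operatorname{dist}(E\cap S_1, E\cap S_2)\ge\sqrt{2h}/6$, then isoperimetry and the same bookkeeping of $\pi(E^\comp)$ against $\sqrt{\alpha h}\,\pi(S)$ using $\pi(S)\ge s$. (You even correctly identify the complement as $E^\comp\cup S_3$, where the paper's text has a typo writing $E^\comp\cap S_3$ before proceeding with the union bound.)
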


Together with the mixing time bound in Corollary~\ref{cor:mixing_from_s_conductance}, we have proven Theorem~\ref{thm:mala_upper_bd}.

\subsection{Auxiliary lemmas} \label{sec:pf_aux}

\subsubsection{Standard facts about strongly log-concave measures}\label{sec:pf_aux_strongly}
The following properties of strongly log-concave measures are well-known.

\begin{lemma}\label{lem:properties_of_strong_log_concave}
    The $\alpha$-strong convexity of $V$ implies the following properties:
    \begin{enumerate}
        \item (moment and tail bounds) For $\bx \sim \pi$, it holds that $\E\norm{\bx}^2 \le d/\alpha$.
        
        In fact, for all $k\ge 2$,
        \begin{align*}
            \E\norm{\bx}^k
            &\le \frac{3^k \, (d^{k/2} + k^{k/2})}{\alpha^{k/2}} \,.
        \end{align*}
        Consequently, $\E\exp(\lambda \, \norm{\bx}^2)$ is bounded above by a universal constant, provided that $0 \le \lambda \le \alpha/(40d)$.
        \item (isoperimetry) For any $S \subseteq \R^d$ with $\pi(A) \le 1/2$, it holds that $\pi(S^\varepsilon \setminus S) \gtrsim \varepsilon \sqrt{\alpha} \, \pi(S)$, where
        \begin{align*}
            S^\varepsilon := \{\bx \in \R^d \mid \exists y \in S~\text{with}~\norm{\bx-\by} \le \varepsilon\}.
        \end{align*}
        \item (sub-Gaussian concentration) For any $1$-Lipschitz function $f : \R^d\to\R$ and $\delta > 0$, with probability at least $1-\delta$ it holds that
        \begin{align*}
            f(\bx) - \E_\pi f \le \sqrt{\frac{2}{\alpha} \ln \frac{1}{\delta}},
        \end{align*}
        when $\bx \sim \pi$.
    \end{enumerate}
\end{lemma}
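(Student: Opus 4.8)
All three assertions are classical, and the plan is to derive them from two inputs. The first is the Bakry--\'Emery criterion: since $\nabla^2 V \succeq \alpha I_d$, the measure $\pi$ satisfies a log-Sobolev inequality with constant $1/\alpha$ (see \citet{bakrygentilledoux2014}). The second is the integration-by-parts identity $\int (\nabla\cdot\bs F)\,\D\pi = \int \langle \bs F, \nabla V\rangle\,\D\pi$, valid for polynomially growing vector fields $\bs F$ because $e^{-V}$ has Gaussian tails. Taking $\bs F(\bx) = \bx$ gives $\E_\pi\langle\bx,\nabla V(\bx)\rangle = d$, while $\alpha$-strong convexity together with $\nabla V(\bs 0) = \bs 0$ gives the pointwise bound $\langle\bx,\nabla V(\bx)\rangle \ge \alpha\,\norm{\bx}^2$; combining these yields $\E_\pi\norm{\bx}^2 \le d/\alpha$, and hence $\E_\pi\norm{\bx} \le \sqrt{d/\alpha}$.

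Next I would obtain the sub-Gaussian concentration bound from the log-Sobolev inequality via the Herbst argument: for any $1$-Lipschitz $f$ and all $\theta \in \R$ one has $\log\E_\pi\exp(\theta\,(f - \E_\pi f)) \le \theta^2/(2\alpha)$, so that $\pi(\{f - \E_\pi f \ge t\}) \le \exp(-\alpha t^2/2)$, which is the claimed bound after optimizing over $\theta$. Applying this to $f = \norm{\cdot}$ (and to its negative) shows that $\norm{\bx} - \E_\pi\norm{\bx}$ is sub-Gaussian with variance proxy $1/\alpha$. The higher-moment bound then follows by writing $\norm{\bx}^k \le 2^{k-1}\,\bigl((\E_\pi\norm{\bx})^k + (\norm{\bx} - \E_\pi\norm{\bx})_+^k\bigr)$, bounding $\E_\pi(\norm{\bx} - \E_\pi\norm{\bx})_+^k = \int_0^\infty k t^{k-1}\,\pi(\{\norm{\bx} - \E_\pi\norm{\bx} \ge t\})\,\D t \le \int_0^\infty k t^{k-1}\exp(-\alpha t^2/2)\,\D t \le (k/\alpha)^{k/2}$ by a Gamma-function computation, and using $\E_\pi\norm{\bx} \le \sqrt{d/\alpha}$; this gives the stated bound (the constant $3^k$ is not tight).

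For the exponential moment I would split $\norm{\bx}^2 \le 2\,(\E_\pi\norm{\bx})^2 + 2\,(\norm{\bx} - \E_\pi\norm{\bx})^2 \le 2d/\alpha + 2\,(\norm{\bx} - \E_\pi\norm{\bx})^2$, so that $\E_\pi\exp(\lambda\,\norm{\bx}^2) \le \exp(2\lambda d/\alpha)\,\E_\pi\exp\bigl(2\lambda\,(\norm{\bx} - \E_\pi\norm{\bx})^2\bigr)$. The first factor is at most $e^{1/20}$ once $\lambda \le \alpha/(40d)$; for the second, the layer-cake formula together with the sub-Gaussian tail of proxy $1/\alpha$ gives $\E_\pi\exp\bigl(\mu\,(\norm{\bx} - \E_\pi\norm{\bx})^2\bigr) \le 1 + 4\mu/(\alpha - 2\mu)$, a universal constant as soon as $\mu \le \alpha/4$, and indeed $\mu = 2\lambda \le \alpha/(20d) \le \alpha/4$. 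Multiplying the two estimates gives $\E_\pi\exp(\lambda\,\norm{\bx}^2) = O(1)$ on the stated range of $\lambda$.

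Finally, for the isoperimetric inequality I would use that a log-Sobolev inequality with constant $1/\alpha$ implies a linear (Cheeger-type) isoperimetric inequality, $\liminf_{t\downarrow 0} t^{-1}\,\pi(S^t\setminus S) \gtrsim \sqrt\alpha\,\min\{\pi(S), 1 - \pi(S)\}$ for every Borel $S$ --- a Buser--Ledoux-type comparison (see \citet{bakrygentilledoux2014}); alternatively one may invoke directly that the isoperimetric profile of an $\alpha$-strongly log-concave measure dominates that of $\mathcal N(\bs 0, \alpha^{-1} I_d)$. Applied along the enlargements $\{S^t\}_{0 \le t \le \varepsilon}$ and writing $g(t) := \pi(S^t)$, this gives, since $\min\{g,1-g\} \gtrsim \pi(S)$ while $g \le 3/4$ (using $\pi(S) \le 1/2$), that $g'(t) \gtrsim \sqrt\alpha\,\pi(S)$ until $g$ possibly exceeds $3/4$; an ODE-comparison argument then yields $\pi(S^\varepsilon\setminus S) \gtrsim \varepsilon\sqrt\alpha\,\pi(S)$, the case where $g$ reaches $3/4$ before time $\varepsilon$ being handled separately (there $\pi(S^\varepsilon\setminus S) \ge 1/4 \gtrsim \varepsilon\sqrt\alpha\,\pi(S)$ in the regime $\varepsilon\sqrt\alpha = O(1)$ relevant to the application). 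Throughout, the only step requiring genuine care is the bookkeeping of absolute constants --- in particular matching the threshold $\alpha/(40d)$ in the exponential-moment bound --- but no conceptual obstacle arises, which is precisely why these facts are standard.
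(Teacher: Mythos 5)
Your proof is correct, but it is genuinely more self-contained than the paper's, which handles this lemma entirely by citation: the moment bounds are taken from \citet[Lemma 2]{dalalyankaragulyanrioudurand2019nonstronglylogconcave}, the isoperimetric inequality from \citet[\S 8.5.2]{bakrygentilledoux2014} together with \citet{bobkovhoudre1997sobolevisoperimetry} (which supplies exactly the integral-versus-differential conversion you carry out by hand with the ODE comparison for $g(t)=\pi(S^t)$), and the concentration statement from \citet[Corollary 5.7.2]{bakrygentilledoux2014}; the paper also notes that all three facts follow at once from Caffarelli's contraction theorem by transferring the corresponding Gaussian facts. Your route --- Bakry--\'Emery to get the log-Sobolev constant $1/\alpha$, integration by parts with $\bs F(\bx)=\bx$ for the second moment, Herbst for concentration, and layer-cake/Gamma-function computations for the higher and exponential moments --- is a standard but complete derivation, and your constants do land within the stated $3^k$ and $\alpha/(40d)$ thresholds. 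Two points in your write-up deserve credit rather than criticism: you correctly observe that the isoperimetric claim as literally stated can only hold in the regime $\varepsilon\sqrt\alpha = O(1)$ (the right-hand side would otherwise exceed $1$), a caveat the paper leaves implicit since its application uses $\varepsilon \asymp \sqrt h$ with $\alpha h$ small; and you correctly note that the two-sided tail needed for the exponential moment requires applying Herbst to both $\norm{\cdot}$ and $-\norm{\cdot}$. The only step stated slightly loosely is the passage from the Cheeger-type lower bound on the \emph{lower} Minkowski content to the integrated enlargement bound, which requires the measurability/integration argument of Bobkov--Houdr\'e rather than literal differentiability of $t\mapsto\pi(S^t)$; this is exactly what the paper's citation is for, and it does not affect correctness.
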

\begin{proof}
    The first statement is a simplification of~\cite[Lemma 2]{dalalyankaragulyanrioudurand2019nonstronglylogconcave}.
    For the second statement, in fact strongly log-concave measures satisfy a stronger isoperimetric inequality (sometimes called a Gaussian isoperimetric inequality, or a log-isoperimetric inequality in~\cite{chenetal2020hmc}); we refer to~\cite[\S 8.5.2]{bakrygentilledoux2014} and the paper~\cite{bobkovhoudre1997sobolevisoperimetry} which explains the relationship between  integral form of the isoperimetric inequality employed here and the more traditional differential version.
    Finally, for the third statement, see e.g.~\cite[\S 5.4.2, Corollary 5.7.2]{bakrygentilledoux2014}.
    
    Alternatively, these facts all follow from the corresponding facts about standard Gaussians, as a consequence of Caffarelli's contraction theorem~\citep{caffarelli2000contraction, fathigozlanprodhomme2020caffarelli}; see also the discussion in~\cite[\S 9.2.3]{villani2003topics}.
\end{proof}

\subsubsection{Stochastic calculus results}\label{sec:pf_aux_stochastic}
Below, we also collect together some inequalities proven via stochastic calculus.
In what follows, ${(\bar\bX_t)}_{t\ge 0}$ is the Langevin diffusion~\eqref{eq:langevin_sde}, started at $\bx$.
We start with a bound on the mean squared displacement $\E[\norm{\bar\bX_t - \bx}^2]$ of the Langevin diffusion.

\begin{lemma} \label{lem:ito_distance}
    If ${(\bar{\bX}_t)}_{t\ge 0}$ denotes the continuous-time Langevin process~\eqref{eq:langevin_sde} started at $\bx$, then for all $t\leq 1/(3\beta^{4/3})$, we have
    \begin{align*}
        \E[\norm{\bar{\bX}_t - \bx}^2]
        &\le 3 t\,(d+ \beta^{2/3} \, \norm{\bx}^2)\,.
    \end{align*}
\end{lemma}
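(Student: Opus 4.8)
The plan is to apply It\^o's formula to $t \mapsto \norm{\bar\bX_t - \bx}^2$ and reduce the statement to a scalar Gr\"onwall inequality for $u(t) := \E[\norm{\bar\bX_t - \bx}^2]$. Starting from~\eqref{eq:langevin_sde}, It\^o's formula gives
\begin{align*}
    \norm{\bar\bX_t - \bx}^2
    &= -2\int_0^t \langle \bar\bX_s - \bx, \nabla V(\bar\bX_s)\rangle \, \D s + 2\sqrt 2 \int_0^t \langle \bar\bX_s - \bx, \D\bB_s\rangle + 2dt\,.
\end{align*}
After a standard localization argument to handle the local martingale term (stop at the exit times of large balls and pass to the limit by monotone convergence, using that the Langevin diffusion has finite moments of all orders on $[0,h]$), taking expectations yields $u'(t) = 2d - 2\,\E\langle \bar\bX_t - \bx, \nabla V(\bar\bX_t)\rangle$.

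Next I would bound the drift term using convexity of $V$ together with the normalization $\nabla V(\bs 0) = \bs 0$. Monotonicity of $\nabla V$ gives $\langle \bar\bX_t - \bx, \nabla V(\bar\bX_t) - \nabla V(\bx)\rangle \ge 0$, hence
\begin{align*}
    -\langle \bar\bX_t - \bx, \nabla V(\bar\bX_t)\rangle
    &\le -\langle \bar\bX_t - \bx, \nabla V(\bx)\rangle
    \le \norm{\bar\bX_t - \bx}\,\norm{\nabla V(\bx)}
    \le \beta\,\norm{\bx}\,\norm{\bar\bX_t - \bx}\,,
\end{align*}
where the last inequality uses $\beta$-smoothness. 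Taking expectations, applying Cauchy--Schwarz, and then Young's inequality in the form $2\beta\,\norm{\bx}\sqrt{u} \le \beta^{4/3}u + \beta^{2/3}\norm{\bx}^2$ gives the differential inequality
\begin{align*}
    u'(t) \le \beta^{4/3} u(t) + \bigl(2d + \beta^{2/3}\norm{\bx}^2\bigr)\,, \qquad u(0) = 0\,.
\end{align*}

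Gr\"onwall's lemma then yields $u(t) \le \beta^{-4/3}\,(2d + \beta^{2/3}\norm{\bx}^2)\,(e^{\beta^{4/3}t} - 1)$, and since $e^x - 1 \le \tfrac32 x$ on $[0,1/3]$, for $t \le 1/(3\beta^{4/3})$ this becomes $u(t) \le \tfrac32 t\,(2d + \beta^{2/3}\norm{\bx}^2) \le 3t\,(d + \beta^{2/3}\norm{\bx}^2)$, as claimed. I do not expect any genuine obstacle here: the only delicate point is justifying that the stochastic integral has zero expectation (equivalently, that $u$ is absolutely continuous with the stated derivative), which is the routine localization-plus-monotone-convergence argument combined with the a priori moment bounds for the diffusion on a bounded interval; the remaining computation is elementary and the numerical constants carry ample slack.
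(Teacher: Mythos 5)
Your proof is correct, and the constants check out: the differential inequality $u' \le \beta^{4/3} u + 2d + \beta^{2/3}\norm{\bx}^2$ with $u(0)=0$ gives $u(t) \le \beta^{-4/3}\,(2d+\beta^{2/3}\norm{\bx}^2)\,(e^{\beta^{4/3}t}-1) \le \tfrac32 t\,(2d+\beta^{2/3}\norm{\bx}^2) \le 3t\,(d+\beta^{2/3}\norm{\bx}^2)$ on the stated range of $t$. The overall skeleton (It\^o's formula for $\norm{\bar\bX_t-\bx}^2$ followed by Gr\"onwall) is the same as the paper's, but you bound the drift term differently, and the difference is worth noting. You invoke convexity of $V$ to discard $\langle \bar\bX_t - \bx, \nabla V(\bar\bX_t) - \nabla V(\bx)\rangle \ge 0$ and are left only with the linear term $\beta\,\norm{\bx}\,\norm{\bar\bX_t-\bx}$, which makes the Gr\"onwall coefficient $\beta^{4/3}$ rather than $3\beta^{4/3}$. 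The paper deliberately avoids this: it bounds $\abs{\langle \nabla V(\bar\bX_u), \bar\bX_u - \bx\rangle}$ using only $\beta$-smoothness (paying an extra $\beta\,\norm{\bar\bX_u-\bx}^2$ for the quadratic self-interaction), precisely so that the lemma rests on smoothness alone and can be reused for non-log-concave targets, as flagged both in its proof and in the remark after Theorem~\ref{thm:mala_upper_bd}. So your route is slightly cleaner but strictly narrower in scope; within the setting of this paper ($\alpha$-strongly convex $V$) both are perfectly valid. Two minor points in your favor: your It\^o correction $2dt$ is the right one for the generator $-\langle\nabla V,\nabla\rangle + \Delta$ of~\eqref{eq:langevin_sde}, and your explicit mention of the localization step for the stochastic integral is a detail the paper glosses over.
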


\begin{proof}



    Fix $s\in[0,  t]$.
    From It\^o's lemma~\citep[Theorem 5.10]{legall2016stochasticcalc}, we have
\begin{align*}
    \E[\norm{\bar{\bX}_s- \bx}^2]
    &= \E\int_0^s \bigl\{-2\,\langle \nabla V(\bar{\bX}_u), \bar{\bX}_u -\bx \rangle + \frac{1}{2}\cdot 2 d\bigr\} \, \D u\\
    &= \E\int_0^s \{-2 \, \langle \nabla V(\bar{\bX}_u), \bar{\bX}_u - \bx \rangle\} \, 
    \D u + sd\,.
\end{align*}
To upper bound the first term on the right-hand side, we could conclude easily using a convexity of $V$ with slightly different dependence on $\beta$ in the final result. Instead, we take somewhat of a detour to show that this results hinges solely on the smoothness of $V$ and can therefore be extended beyond the log-concave case. 

Note that
\begin{align*}
    \abs{\langle \nabla V(\bar{\bX}_u), \bar{\bX}_u - \bx \rangle}
     &\le \abs{\langle \nabla V(\bar{\bX}_u)-\nabla V(\bx), \bar{\bX}_u -\bx \rangle} +\abs{\langle \nabla V(\bx), \bar{\bX}_u -\bx \rangle} \\
     &\le \beta \, \norm{\bar{\bX}_u -\bx}^2 + \frac{1}{2\beta^{4/3}}\, \norm{\nabla V(\bx)}^2+ \frac{\beta^{4/3}}{2} \, \norm{\bar{\bX}_u -\bx}^2\\
      &\le \frac{3\beta^{4/3}}{2}\, \norm{\bar{\bX}_u -\bx}^2 + \frac{\beta^{2/3}}{2} \, \norm{\bx}^2\,,
\end{align*}
where the last two inequalities follow from $\beta$-smoothness of $V$ (see e.g. \cite[Theorem 2.1.5]{nesterov2018lectures}), and our assumption $\argmin V=\bs 0$.
Thus, letting $a(u):= \E[\norm{\bar{\bX}_u- \bx}^2]$, we obtain  the following integral inequality:
\begin{align*}
  a(s) \leq (d+ \beta^{2/3}\, \norm{\bx}^2)\, s + 3\beta^{4/3} \int_0^s a(u) \,  \D u\,,\qquad \forall s\in[0,t]\,.
\end{align*}
Applying a version of Gr\"{o}nwall's inequality (e.g. \cite[Lemma 1.2.4]{stroock2018elements}), we obtain:
\begin{align*}
    a(t) &\leq t\, (d + \beta^{2/3} \,\norm{\bx}^2) \exp(3\beta^{4/3} t) 
    \leq 3t\,(d+ \beta^{2/3}\, \norm{\bx}^2)  \,,
\end{align*}
where the last line uses the hypothesis $t \leq 1/(3\beta^{4/3})$.
\end{proof}

In addition, we will also need a concentration inequality for $\norm{\bar\bX_t - \bx}^2$.
We first present a bound on the moment generating function of the supremum of a one-dimensional Brownian motion using the reflection principle.
\begin{lemma}\label{lem:Brownian_MGF}
    Let ${(B_s)}_{s\ge 0}$ be a standard one-dimensional Brownian motion. For $h,\lambda > 0$, such that $\lambda < \frac{1}{2h}$ the following holds:
    \begin{align*}
        \E \exp\Bigl( \lambda \sup_{s\in [0,h]} |B_s|^2 \Bigr) \le \frac{1+2h\lambda}{1-2h\lambda}.
    \end{align*}
\end{lemma}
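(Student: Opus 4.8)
The plan is to bound the supremum of a one-dimensional Brownian motion using the reflection principle, which states that $\sup_{s\in[0,h]} B_s$ has the same distribution as $|B_h|$. First I would observe that $\sup_{s\in[0,h]} |B_s| = \max\{\sup_{s\in[0,h]} B_s, \sup_{s\in[0,h]}(-B_s)\}$, so that $\sup_{s\in[0,h]} |B_s|^2 = \max\{(\sup_{s\in[0,h]} B_s)^2, (\sup_{s\in[0,h]}(-B_s))^2\}$. Since $\exp(\lambda \max\{a,b\}) \le \exp(\lambda a) + \exp(\lambda b)$ for $a,b \ge 0$, and since $-B$ is also a standard Brownian motion, it suffices to bound $\E\exp(\lambda\,(\sup_{s\in[0,h]} B_s)^2)$ and then double it.

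Next, by the reflection principle, $\sup_{s\in[0,h]} B_s \overset{d}{=} |B_h|$, where $B_h \sim \mc N(0, h)$. Thus I would reduce the problem to computing $\E\exp(\lambda\,|B_h|^2) = \E\exp(\lambda B_h^2)$, which is a standard Gaussian moment generating function computation: for $G \sim \mc N(0,1)$ and $0 \le \mu < 1/2$, $\E\exp(\mu G^2) = (1-2\mu)^{-1/2}$. Substituting $\mu = \lambda h$ (valid since $\lambda < 1/(2h)$), this gives $\E\exp(\lambda B_h^2) = (1-2h\lambda)^{-1/2}$. Doubling yields $\E\exp(\lambda \sup_{s\in[0,h]}|B_s|^2) \le 2\,(1-2h\lambda)^{-1/2}$.

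Finally, I would need to reconcile this with the claimed bound $\frac{1+2h\lambda}{1-2h\lambda}$. The crude factor of $2$ from the union bound is slightly lossy; to get the stated form one should instead use the joint reflection principle more carefully, or note that $2\,(1-2h\lambda)^{-1/2} \le \frac{1+2h\lambda}{1-2h\lambda}$ fails in general, so a sharper argument is needed. The cleaner route is: the joint law of $(\sup_{s\in[0,h]} B_s, \inf_{s\in[0,h]} B_s)$ can be handled via the reflection principle to show $\Pr(\sup_{s\in[0,h]}|B_s| \ge u) \le 2\,\Pr(|B_h| \ge u)$, and then integrating the tail bound $\E\exp(\lambda M^2) = 1 + \int_0^\infty 2\lambda u \exp(\lambda u^2)\,\Pr(M \ge u)\,\D u$ with $M := \sup_{s\in[0,h]}|B_s|$ against the tail estimate $\Pr(M \ge u) \le 2\exp(-u^2/(2h))$ gives, after the elementary integral, the bound $\frac{1+2h\lambda}{1-2h\lambda}$.

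\medskip
\noindent\textbf{Main obstacle.} The subtle point is obtaining the precise constant in the bound rather than a version off by a multiplicative factor. A naive union bound over $\sup B_s$ and $\sup(-B_s)$ introduces a factor of $2$ that does not combine cleanly into the tight form $\frac{1+2h\lambda}{1-2h\lambda}$; the resolution is to work with the tail probability $\Pr(\sup_{s\in[0,h]}|B_s| \ge u) \le 2\exp(-u^2/(2h))$ and integrate $2\lambda u\exp(\lambda u^2)$ against it, so the factor of $2$ only multiplies the integral term and not the leading $1$. Carrying out this tail integration carefully is where the stated constant comes from.
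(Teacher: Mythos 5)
Your final argument---bounding $\Pr(\sup_{s\in[0,h]}|B_s|\ge u)\le 2\exp(-u^2/(2h))$ via the reflection principle and then integrating the tail against $2\lambda u\exp(\lambda u^2)$---is exactly the paper's proof, and the computation does yield $1+\tfrac{4h\lambda}{1-2h\lambda}=\tfrac{1+2h\lambda}{1-2h\lambda}$ as claimed. The initial detour through the MGF union bound is correctly identified by you as too lossy and discarded, so the proposal as a whole is correct and matches the paper's approach.
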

\begin{proof}
    The reflection principle \cite[Proposition 6.19, 2.2.6]{karatzas1998brownian} states that for every $t > 0$,
    \begin{align*}
        \mbb P\bigl(\sup_{s\in[0,h]} B_s > t\bigr) = 2 \,\Pr(B_h > t).
    \end{align*}
    As a result, we have that
    \begin{align*}
        \mbb P\bigl(\sup_{s\in[0,h]} |B_s|^2 > t\bigr)
        &= \mbb P\bigl(\sup_{s\in[0,h]} |B_s| > \sqrt t\bigr)\\
        &\le \mbb P\bigl(\sup_{s\in[0,h]} B_s > \sqrt t\bigr) + \mbb P\bigl(\inf_{s\in[0,h]} B_s < - \sqrt t\bigr)\\
        &= 4 \, \Pr(B_h > \sqrt t)
        \le 2 \exp\bigl(- \frac{t}{2h}\bigr).
    \end{align*}
    Thus,
    \begin{align*}
        \E \exp\Bigl( \lambda \sup_{s\in [0,h]} |B_s|^2 \Bigr)
        &= 1+\lambda \int_0^\infty \exp(\lambda t) \, \mbb P\bigl(\sup_{s\in[0,h]} |B_s|^2 > t\bigr) \, \D t \\
        &\le 1+2\lambda \int_0^\infty \exp\bigl( - \frac{1-2h\lambda}{2h} \, t\bigr) \, \D t
        = 1+\frac{4h\lambda}{1-2h\lambda} \,.
    \end{align*}
\end{proof}

The above argument is relevant for Lemma~\ref{lem:sup_xt_control}, which is restated and proved below.
\lemsupcontrol*
\begin{proof}
    For a fixed realization of the sample path ${(\bar{\bX}_t)}_{t\in [0,h]}$ and $0 \le t \le h$, define the function $f(t) := \sup_{s\in [0,t]}{\norm{\bar\bX_s - \bx}^2}$.
    Then, for all $s \in [0,t]$,
    \begin{align*}
        \norm{\bar\bX_s - \bx}^2
        &= \Bigl\lVert -\int_0^s \nabla V(\bar\bX_r) \, \D r + \sqrt 2 \, \bB_s \Bigr\rVert^2
        \le 2 \, \Bigl\lVert -\int_0^s \nabla V(\bar\bX_r) \, \D r \Bigr\rVert^2 + 4 \, \norm{\bB_s}^2 \\
        &\le 2h \int_0^s \norm{\nabla V(\bar\bX_r)}^2 \, \D r + 4 \, \norm{\bB_s}^2
        \le 2\beta^2 h \int_0^s \norm{\bar\bX_r}^2 \, \D r + 4 \, \norm{\bB_s}^2 \\
        &\le 4\beta^2 h \int_0^s \norm{\bar\bX_r - \bx}^2 \, \D r + 4\beta^2 h^2 \, \norm{\bx}^2 + 4 \, \norm{\bB_s}^2 \\
        &\le 4\beta^2 h \int_0^s f(r) \, \D r + 4\beta^2 h^2 \, \norm{\bx}^2 + 4 \, \norm{\bB_s}^2
    \end{align*}
    which yields
    \begin{align*}
        f(t)
        &= \sup_{s \in [0,t]}{\norm{\bar\bX_s - \bx}^2}
        \le 4\beta^2 h \int_0^t f(r) \, \D r + 4\beta^2 h^2 \, \norm{\bx}^2 + 4 \sup_{s \in [0,h]}{\norm{\bB_s}^2}\,.
    \end{align*}
    Applying Gr\"{o}nwall's inequality~\citep[Lemma 1.2.4]{stroock2018elements}, we see that
    \begin{align*}
        f(h)
        &= \sup_{s \in [0,h]}{\norm{\bar\bX_s - \bx}^2}
        \le \bigl(4\beta^2 h^2 \, \norm{\bx}^2 + 4 \sup_{s \in [0,h]}{\norm{\bB_s}^2}\bigr) \exp(4\beta^2 h^2) \\
        &\le 12\beta^2 h^2 \, \norm{\bx}^2 + 12 \sup_{s \in [0,h]}{\norm{\bB_s}^2} \,.
    \end{align*}
    Hence,
    \begin{align*}
        \E \exp\bigl(\lambda \sup_{t\in [0,h]}{\norm{\bar\bX_t - \bx}^2}\bigr)
        &\le \exp(12\beta^2 \lambda h^2 \, \norm{\bx}^2) \E\exp\bigl(12\lambda \sup_{s \in [0,h]}{\norm{\bB_s}^2}\bigr) \\
        &\le \exp(12\beta^2 \lambda h^2 \, \norm{\bx}^2) \, {\bigl\{ \E\exp\bigl(12\lambda \sup_{s \in [0,h]}{\abs{B_s}^2}\bigr)\bigr\}}^d \\
        &\le \exp(12\beta^2 \lambda h^2 \, \norm{\bx}^2) \, \bigl( \frac{1+24h\lambda}{1-24h\lambda} \bigr)^d\,,
    \end{align*}
    by Lemma~\ref{lem:Brownian_MGF} and the assumption $\lambda < 1/(24h)$.
\end{proof}

\begin{corollary}\label{cor:moments_of_langevin}
    Assume $h \le 1/(2\beta)$.
    There exists a numerical constant $C > 0$ such that for all $k\ge 1$,
    \begin{align*}
        \E \sup_{t\in [0,h]}{\norm{\bar \bX_t - \bx}^{2k}}
        &\le C^k \, (\beta^k h^{2k} \, \norm{\bx}^{2k} + d^k h^k + h^k k^k).
    \end{align*}
\end{corollary}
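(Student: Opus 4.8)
The plan is to deduce the polynomial moment bound from the exponential‑moment bound on $\sup_{t\in[0,h]}\norm{\bar\bX_t-\bx}^2$ already recorded in Lemma~\ref{lem:sup_xt_control}. Write $Z:=\sup_{t\in[0,h]}\norm{\bar\bX_t-\bx}^2\ge 0$, so the quantity to control is $\E Z^k$. First I would specialize Lemma~\ref{lem:sup_xt_control}: since $h\le 1/(2\beta)$, for every $\lambda\in(0,1/(48h)]$ we have $24h\lambda\le 1/2$, so the elementary inequality $\ln\frac{1+x}{1-x}\le 3x$ for $0\le x\le 1/2$ (the same one used in the proof of Lemma~\ref{lem:exp_H_t}) converts the bound of Lemma~\ref{lem:sup_xt_control} into
\begin{align*}
    \E\exp(\lambda Z)\le \exp(a\lambda),\qquad 0\le\lambda\le \frac{1}{b},
\end{align*}
with $a:=12\beta^2h^2\norm{\bx}^2+72dh$ and $b:=48h$.

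The second step is the standard passage from an exponential‑moment bound of this shape to polynomial moments. By the Chernoff bound, for $t>a$ the infimum over $\lambda\in(0,1/b]$ of $\exp(a\lambda-t\lambda)$ is attained at the endpoint $\lambda=1/b$, giving $\Pr(Z>t)\le\exp(-(t-a)/b)$; for $t\le a$ we use $\Pr(Z>t)\le 1$. Plugging into the layer‑cake formula $\E Z^k=\int_0^\infty kt^{k-1}\,\Pr(Z>t)\,\D t$ and splitting the integral at $t=a$, the part over $[0,a]$ contributes $a^k$, while on $[a,\infty)$ the substitution $t=a+u$ together with $(a+u)^{k-1}\le 2^{k-1}(a^{k-1}+u^{k-1})$ and $\int_0^\infty u^{k-1}e^{-u/b}\,\D u=\Gamma(k)b^k$ bounds the tail by $C^k(a^k+b^kk^k)$ (using Young's inequality $a^{k-1}\cdot kb\le a^k+k^kb^k$). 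Hence $\E Z^k\le C^k(a^k+b^kk^k)$, and substituting $a,b$ and expanding $(x+y)^k\le 2^{k-1}(x^k+y^k)$ once more yields
\begin{align*}
    \E\sup_{t\in[0,h]}\norm{\bar\bX_t-\bx}^{2k}=\E Z^k\le C^k\,(\beta^{2k}h^{2k}\,\norm{\bx}^{2k}+d^kh^k+h^kk^k),
\end{align*}
which is the asserted inequality (the displacement term naturally carries $(\beta h)^{2k}\norm{\bx}^{2k}$, and $\beta\ge 1$).

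The argument is essentially mechanical once Lemma~\ref{lem:sup_xt_control} is in hand; the only point that needs a little care is the exponential‑to‑polynomial conversion, where one must keep the Chernoff parameter inside the admissible range $(0,1/b]$ — naively setting $\lambda=1/b$ unconditionally would produce a spurious factor $\exp(a/b)=\exp(\Omega(d))$ — and split the layer‑cake integral at $t=a$ so as to isolate the bulk contribution $a^k$ from the sub‑exponential tail contribution $b^kk^k$. As an alternative that avoids citing Lemma~\ref{lem:sup_xt_control} as a black box, one could start directly from the pathwise estimate $\sup_{t\in[0,h]}\norm{\bar\bX_t-\bx}^2\le 12\beta^2h^2\norm{\bx}^2+12\sup_{s\in[0,h]}\norm{\bB_s}^2$ obtained inside its proof, raise it to the $k$-th power, and bound $\E\sup_{s\in[0,h]}\norm{\bB_s}^{2k}$ using $\sup_{s}\norm{\bB_s}^2\le\sum_{i=1}^d\sup_s|B_s^{(i)}|^2$, Lemma~\ref{lem:Brownian_MGF}, and the same conversion; the route through Lemma~\ref{lem:sup_xt_control} is shorter.
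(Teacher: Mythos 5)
Your proof is correct and follows essentially the same route as the paper's: specialize Lemma~\ref{lem:sup_xt_control} at $\lambda = 1/(48h)$ (using $h \le 1/(2\beta)$ so that $24h\lambda \le 1/2$), deduce a sub-exponential tail bound by Markov/Chernoff, and convert the tail bound into moment bounds --- the paper simply cites Proposition~2.7.1 of \cite{vershynin2018highdimprob} for this last step, where you carry out the layer-cake computation explicitly. One caveat: your argument (like the paper's own) yields the displacement term $C^k\,\beta^{2k} h^{2k}\,\norm{\bx}^{2k}$, whereas the corollary asserts $C^k\,\beta^{k} h^{2k}\,\norm{\bx}^{2k}$; since $\beta \ge 1$ this makes your bound the \emph{weaker} of the two, so the parenthetical appeal to ``$\beta \ge 1$'' does not actually close the gap. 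This discrepancy appears to be a typo in the statement (the paper's intermediate Markov display already writes $12h^2\beta\,\norm{\bx}^2$ where the MGF bound forces $12h^2\beta^2\,\norm{\bx}^2$), and the $\beta^{2k}$ version suffices for the downstream use in Lemma~\ref{lem:moment_bdd}, but as written your final display does not literally match the asserted inequality.
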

\begin{proof}
    In Lemma~\ref{lem:sup_xt_control}, take $\lambda := 1/(48h)$ to yield
    \begin{align*}
        \E\exp\bigl(\lambda \sup_{t\in [0,h]}{\norm{\bar\bX_t - \bx}^2}\bigr)
        &\le \exp\bigl(\frac{1}{4}\beta^2 h \, \norm{\bx}^2 + d\ln 3\bigr) \,.
    \end{align*}
    It follows from Markov's inequality that for all $x \ge 0$,
    \begin{align*}
        \Pr\bigl(\sup_{t\in [0,h]}{\norm{\bar\bX_t - \bx}^2} \ge 12h^2 \beta \, \norm{\bx}^2 + (48 \ln 3) hd + x\bigr)
        &\le \exp\bigl( - \frac{x}{48h}\bigr).
    \end{align*}
    The result now follows from standard moment bounds under sub-exponential concentration~\citep[see, e.g.,][Proposition 2.7.1]{vershynin2018highdimprob}.
\end{proof}

\begin{remark}
    Bounds such as the one in Corollary~\ref{cor:moments_of_langevin} are standard and have appeared in the literature before, e.g.,~\cite[Lemma 11]{mouetal2019improvedlangevin}.
\end{remark}

\subsection{From total variation to other distances}\label{scn:other_distances}

In this section, we deduce the mixing time results of Theorem~\ref{thm:mala_upper_bd} for the KL divergence, the chi-squared divergence, and the $2$-Wasserstein distance.

We begin with the following lemma which shows that the warmness parameter (defined in Definition~\ref{def:warmstart}) is preserved by the iterations of MALA. In fact, this is true for all reversible Markov chains.

\begin{lemma}\label{lem:getting_hotter_by_the_minute}
    Let ${(\mu_n)}_{n\in\N}$ denote the iterates of a Markov chain whose kernel $T$ is reversible with respect ot $\pi$, and assume that $\mu_0$ is $M_0$-warm with respect to $\pi$.
    Then, for all $n\in\N$, the iterate $\mu_n$ is also $M_0$-warm with respect to $\pi$.
\end{lemma}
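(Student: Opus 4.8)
The plan is to show that warmness is preserved under one step of a $\pi$-reversible kernel, and then iterate. So it suffices to prove: if $\mu$ is $M_0$-warm with respect to $\pi$, then $\mu T$ is $M_0$-warm with respect to $\pi$, where $(\mu T)(E) = \int T(\bx, E)\,\mu(\D\bx)$.

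First I would reduce to a pointwise density statement. Since $T$ is reversible with respect to $\pi$, the measure $\pi(\D\bx)\,T(\bx,\D\by)$ on $\R^d \times \R^d$ is symmetric under swapping the two coordinates. Writing $M_0$-warmness as $\D\mu/\D\pi \le M_0$ $\pi$-a.e., I would compute, for any Borel set $E$,
\begin{align*}
    (\mu T)(E)
    &= \int T(\bx, E) \, \mu(\D \bx)
    = \int \frac{\D\mu}{\D\pi}(\bx) \, T(\bx, E) \, \pi(\D \bx)
    \le M_0 \int T(\bx, E) \, \pi(\D \bx)
    = M_0 \, \pi(E)\,,
\end{align*}
where the final equality uses that $\pi$ is stationary for $T$ (itself a consequence of reversibility: $\int T(\bx,E)\,\pi(\D\bx) = \iint \mathbbm 1_E(\by)\,\pi(\D\bx)\,T(\bx,\D\by) = \iint \mathbbm 1_E(\by)\,\pi(\D\by)\,T(\by,\D\bx) = \pi(E)$). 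This shows $\mu T$ is $M_0$-warm.

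Then I would conclude by induction on $n$: the base case $n=0$ is the hypothesis, and the inductive step is exactly the one-step claim above applied to $\mu_n = \mu_{n-1} T$. For the MALA kernel specifically, one should note that $T$ has an atomic part $[1-A(\bx)]\,\delta_{\bx}$, so "density" arguments must be phrased measure-theoretically as above (working with $\D\mu/\D\pi$ and the set function $E \mapsto T(\bx,E)$) rather than with transition densities; this is the only mild technical point. There is no real obstacle here — the statement is essentially the observation that warmness is a statement about the Radon–Nikodym derivative $\D\mu_0/\D\pi$ being bounded, and applying a $\pi$-self-adjoint Markov operator can only average this derivative (via $\D(\mu T)/\D\pi = T^*(\D\mu/\D\pi)$ where $T^*$ is the adjoint on $L^\infty(\pi)$, which is a contraction in sup-norm and fixes constants), hence cannot increase its essential supremum. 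The main thing to be careful about is simply invoking reversibility correctly to get both stationarity of $\pi$ and the change-of-variables swap.
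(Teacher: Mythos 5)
Your proof is correct and follows essentially the same argument as the paper: bound the Radon--Nikodym derivative $\D\mu_n/\D\pi$ by $M_0$, push it through the kernel using reversibility (the paper uses detailed balance pointwise on densities, you use stationarity of $\pi$ in measure form), and induct. Your measure-theoretic phrasing is a slight refinement, since it cleanly accommodates the atomic part of the MALA kernel, which the paper's density notation glosses over.
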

\begin{proof}
    The proof is by induction. For any $\by \in \R^d$,
    \begin{align*}
        \frac{\mu_{n+1}(\by)}{\pi(\by)}
        &= \int \frac{\mu_n(\bx)}{\pi(\by)} \, T(\bx, \by) \, \D \bx
        = \int \frac{\mu_n(\bx)}{\pi(\bx)} \, \frac{\pi(\bx) T(\bx, \by)}{\pi(\by)} \, \D \bx
        \le M_0 \int T(\by, \bx) \, \D \bx
        = M_0\,,
    \end{align*}
    where we use the inductive assumption and the reversibility of $T$.
\end{proof}

Under a warmness condition, the total variation distance controls the chi-squared divergence.

\begin{lemma}\label{lem:chi_sq_control_under_warm}
    Let $\mu$ be $M_0$-warm with respect to $\pi$.
    Then,
    \begin{align*}
        \chi^2(\mu \mmid \pi)
        &\le 2M_0 \, \norm{\mu-\pi}_{\rm TV}\,.
    \end{align*}
\end{lemma}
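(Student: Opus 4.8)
The plan is to rewrite the chi-squared divergence in a form that is linear in $\mu - \pi$ and then exploit the pointwise bound on the density ratio $\mu/\pi$ furnished by warmness. Recall (in the paper's density convention) that
\begin{align*}
    \chi^2(\mu \mmid \pi)
    = \int \Bigl(\frac{\mu(\bx)}{\pi(\bx)} - 1\Bigr)^2 \pi(\bx) \, \D \bx
    = \int \Bigl(\frac{\mu(\bx)}{\pi(\bx)} - 1\Bigr) \, \bigl(\mu(\bx) - \pi(\bx)\bigr) \, \D \bx\,,
\end{align*}
where the second identity follows by expanding the square and using $\int \mu = \int \pi = 1$. So the first step is just this algebraic rewriting.

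Next I would invoke the warm start hypothesis. By Definition~\ref{def:warmstart}, applied to arbitrarily small Borel sets (equivalently, by Radon--Nikodym differentiation), $M_0$-warmness gives $\mu(\bx)/\pi(\bx) \le M_0$ for $\pi$-a.e.\ $\bx$; since also $\mu/\pi \ge 0$ and $M_0 \ge 1$, this yields the two-sided bound $\lvert \mu(\bx)/\pi(\bx) - 1 \rvert \le M_0$ a.e. Plugging this into the display above,
\begin{align*}
    \chi^2(\mu \mmid \pi)
    \le \int \Bigl\lvert \frac{\mu(\bx)}{\pi(\bx)} - 1 \Bigr\rvert \, \lvert \mu(\bx) - \pi(\bx) \rvert \, \D \bx
    \le M_0 \int \lvert \mu(\bx) - \pi(\bx) \rvert \, \D \bx
    = 2 M_0 \, \norm{\mu - \pi}_{\rm TV}\,,
\end{align*}
using $\norm{\mu-\pi}_{\rm TV} = \frac12 \int \lvert \mu - \pi \rvert$. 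This completes the argument.

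There is essentially no obstacle here: the only mild point to be careful about is the justification that $M_0$-warmness on all Borel sets really implies the a.e.\ pointwise bound $\mu/\pi \le M_0$ (this is standard Lebesgue differentiation / Radon--Nikodym reasoning), and the normalization constant relating $\norm{\cdot}_{\rm TV}$ to $\int \lvert \mu - \pi \rvert$. Both are routine, so the proof is short. Combined with Lemma~\ref{lem:getting_hotter_by_the_minute}, which guarantees every MALA iterate remains $M_0$-warm, this lemma is what lets us upgrade the total variation mixing bound to a chi-squared (and hence KL, and hence $W_2$) mixing bound in Theorem~\ref{thm:mala_upper_bd}.
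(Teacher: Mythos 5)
Your proof is correct and follows essentially the same route as the paper: both bound one factor of $\lvert \mu/\pi - 1\rvert$ pointwise by $M_0$ (using $0 \le \mu/\pi \le M_0$) and identify the remaining integral $\int \lvert \mu - \pi\rvert$ with $2\,\norm{\mu-\pi}_{\rm TV}$. The intermediate rewriting as $\int (\mu/\pi - 1)(\mu-\pi)$ is just the paper's $\int \lvert \mu/\pi - 1\rvert^2 \,\D\pi$ in disguise, so there is no substantive difference.
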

\begin{proof}
    From the definition of the chi-squared divergence,
    \begin{align*}
        \chi^2(\mu \mmid \pi)
        &= \int \bigl\lvert \frac{\mu}{\pi} - 1 \bigr\rvert^2 \, \D \pi
        \le M_0 \int\bigl\lvert \frac{\mu}{\pi} - 1 \bigr\rvert \, \D \pi
        = 2M_0 \, \norm{\mu - \pi}_{\TV}\,.
    \end{align*}
    Here we use the fact that pointwise, $\abs{\mu/\pi-1} \le \max\{1, M_0 - 1\} \le M_0$.
\end{proof}

It immediately implies the following result on mixing times.

\begin{corollary}\label{cor:mixing_other_distances}
    Fix $\varepsilon > 0$.
    Then, MALA initialized with a distribution $\mu_0$ which is $M_0$-warm with respect to $\pi$ satisfies the following mixing time bounds:
    \begin{align*}
        \tau_{\rm mix}(\varepsilon, \mu_0; \msf d)
        &\le \tau_{\rm mix}\bigl( \frac{\varepsilon^2}{2M_0}, \mu_0; \TV \bigr)
    \end{align*}
    for each of the distances
    \begin{align*}
        \msf d \in \bigl\{\sqrt{\KL}, \; \sqrt{\chi^2}, \; \sqrt{\frac{\alpha}{2}} \, W_2\bigr\}\,.
    \end{align*}
\end{corollary}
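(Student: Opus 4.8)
The plan is to chain together three comparison inequalities, peeling off one discrepancy at a time, and to track the warmness of the iterates along the way. The backbone is the sequence
\begin{align*}
    \frac{\alpha}{2} \, W_2^2(\mu_n, \pi)
    \;\le\; \KL(\mu_n \mmid \pi)
    \;\le\; \chi^2(\mu_n \mmid \pi)
    \;\le\; 2 M_0 \, \norm{\mu_n - \pi}_{\TV}\,,
\end{align*}
so that a total variation guarantee of size $\varepsilon^2/(2M_0)$ propagates to an $\varepsilon$-guarantee in each of the listed distances after taking square roots.

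First I would handle the chi-squared divergence. The key point is that we may only invoke Lemma~\ref{lem:chi_sq_control_under_warm} if the iterate $\mu_n$ is itself $M_0$-warm; this is exactly what Lemma~\ref{lem:getting_hotter_by_the_minute} provides, since the MALA kernel is reversible with respect to $\pi$. Granting this, Lemma~\ref{lem:chi_sq_control_under_warm} gives $\chi^2(\mu_n \mmid \pi) \le 2 M_0 \, \norm{\mu_n - \pi}_{\TV}$, and hence whenever $n \ge \tau_{\rm mix}(\varepsilon^2/(2M_0), \mu_0; \TV)$ we obtain $\chi^2(\mu_n \mmid \pi) \le \varepsilon^2$, i.e.\ $\sqrt{\chi^2(\mu_n \mmid \pi)} \le \varepsilon$. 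This yields the claim for $\msf d = \sqrt{\chi^2}$.

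Next I would pass to the KL divergence using the standard inequality $\KL(\mu \mmid \pi) \le \chi^2(\mu \mmid \pi)$, which follows from $\ln t \le t - 1$ (or Jensen applied to $-\ln$) and holds with no further hypotheses; see, e.g.,~\cite[Chapter~2]{tsybakov2009nonparametric}. Combined with the previous step this gives $\KL(\mu_n \mmid \pi) \le \varepsilon^2$ for $n$ past the same TV threshold, which settles $\msf d = \sqrt{\KL}$. Finally, for the Wasserstein distance I would invoke Talagrand's $\mathsf{T}_2$ transportation inequality for $\alpha$-strongly log-concave $\pi$, namely $W_2^2(\mu, \pi) \le \tfrac{2}{\alpha} \KL(\mu \mmid \pi)$ (a consequence of the log-Sobolev inequality for $\pi$, which follows from $\alpha$-strong convexity of $V$ via Bakry–Émery); rearranging gives $\tfrac{\alpha}{2} W_2^2(\mu_n, \pi) \le \KL(\mu_n \mmid \pi) \le \varepsilon^2$, hence $\sqrt{\alpha/2}\, W_2(\mu_n, \pi) \le \varepsilon$.

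There is no genuinely hard step here; the proof is a bookkeeping exercise in composing known inequalities. The one point that requires care — and the reason Lemma~\ref{lem:getting_hotter_by_the_minute} is stated separately — is that the warmness constant $M_0$ must be verified to survive the Markov iterations, since otherwise the warm-dependent bound of Lemma~\ref{lem:chi_sq_control_under_warm} could not be applied to $\mu_n$ for $n \ge 1$. Everything else is an invocation of textbook comparison inequalities together with the transportation inequality valid for strongly log-concave targets.
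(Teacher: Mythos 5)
Your proposal is correct and follows exactly the paper's argument: preserve $M_0$-warmness of the iterates via Lemma~\ref{lem:getting_hotter_by_the_minute}, bound $\chi^2$ by $2M_0\,\norm{\cdot}_{\TV}$ via Lemma~\ref{lem:chi_sq_control_under_warm}, then chain $\KL \le \chi^2$ and Talagrand's transportation inequality for the Wasserstein case. No differences worth noting.
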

\begin{proof}
    The mixing time in the chi-squared distance is a straightforward consequence of Lemmas~\ref{lem:getting_hotter_by_the_minute} and~\ref{lem:chi_sq_control_under_warm}.
    The result for the KL divergence now follows since $\KL \le \chi^2$~\citep[Lemma 2.7]{tsybakov2009nonparametric}.
    Finally, for the result in $2$-Wasserstein distance we can use Talagrand's transportation inequality
    \begin{align*}
        \frac{\alpha}{2}\, W_2^2(\mu, \pi)
        &\le \KL(\mu \mmid \pi), \qquad \text{for all probability measures}~\mu \ll \pi\,,
    \end{align*}
    which is a consequence of the strong convexity of $V$~\citep[in fact it is a consequence of the weaker assumption of a log-Sobolev inequality, see][Theorem 9.6.1]{bakrygentilledoux2014}.
\end{proof}

Corollary~\ref{cor:mixing_other_distances} implies the remaining mixing time results in Theorem~\ref{thm:mala_upper_bd}.

\section{Proof of the lower bound}\label{scn:lower_bd_proof}

This section presents the proofs of Theorems~\ref{thm:mala_lower_bd_main} and \ref{thm:spectral_gap_upper_bd}.
The majority of this section is devoted to the proof of the upper bound on the conductance when $h \gg d^{-1/2}$ (Theorem~\ref{thm:mala_lower_bd_main}).
The proof of the upper bound on the spectral gap (Theorem~\ref{thm:spectral_gap_upper_bd}) is given in Appendix~\ref{scn:spectral_gap_upper_bd}.

\subsection{High-level overview of the proof}

Recall that we take $\ee = 1/4 - \delta$, where $\delta > 0$ is fixed throughout.
As mentioned in Section~\ref{scn:lower_bd}, we consider the potential
\begin{align}\label{eq:V_lower_bd}
    V(\bx)
    &= \frac{\|\bx\|^2}{2}- \frac{1}{2d^{2\ee}} \sum_{i=1}^d \cos(d^\ee x_i) \\
    &=: \fo(\bx) + \ft(\bx).
\end{align} 
From the construction, it immediately follows that $V$ is  $1/2$-strongly convex and $3/2$-smooth.

We begin with some intuition for the above construction. 
At a high level, our construction can be seen as a ``perturbed'' Gaussian distribution; $\fo$ is the potential corresponding to a standard Gaussian and $\ft$ corresponds to a perturbation.
Having this interpretation, we are interested in constructing a distribution (i) that is significantly different from the standard Gaussian, yet (ii) the difference is not noticed by each step of MALA. \begin{enumerate}
    \item[(i)] A quick calculation (see Lemma~\ref{lem:kl_of_adversarial}) shows that $\KL(\mc N(0,1) \mmid \pi) = O(d^{1-4\ee})$. 
    So, we must take $\ee \le 1/4$ to ensure that $\pi$ is significantly different from the standard Gaussian.
    \item[(ii)] On the other hand,   $\ft$ is an oscillatory perturbation.
    Hence, MALA would not see the contribution from $\ft$ as long as its movement due to the Langevin proposal is at least as long as the length scale of the fluctuations of $\ft$.

    With this in mind, note that  the fluctuations of $\ft$ is of order $d^{-\ee}$, while the movement of a single coordinate under the Langevin proposal is of order $\sqrt h$ (due to the Gaussian part).
    Hence, MALA would essentially ignore $\ft$ as long as $h \gg d^{-2\ee}$.
\end{enumerate}
We formalize the above heuristic in the rest of this section.




To prove the upper bound on the conductance in Theorem~\ref{thm:mala_lower_bd_main}, we use the following proposition.

\begin{proposition} \label{prop:upper_conductance}
Let $\eve$ be an event such that $
\pi(\eve)\geq 1/2$. Then,
\begin{align*}
    \msf C \leq 2\sup_{\bx \in \eve} \int_{\R^d} Q(\bx, \by) A(\bx, \by) \, \D \by\,.
\end{align*}
\end{proposition}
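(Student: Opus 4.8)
The plan is to exploit the definition of conductance by choosing a particularly convenient test set derived from the event $\eve$, and to relate the ``escape mass'' $\int_S T(\bx, S^\comp)\,\pi(\D\bx)$ to the rejection probability $1 - A(\bx) = \int Q(\bx,\by)A(\bx,\by)\,\D\by$ via the structure~\eqref{eq:metro_adjust} of the MALA kernel. First I would observe that, because $\pi(\eve)\ge 1/2$, we have $\pi(\eve^\comp)\le 1/2$, so $\eve^\comp$ is admissible in the infimum defining $\msf C$ unless $\pi(\eve^\comp)=0$ (a degenerate case where the bound is trivial since the right-hand side is nonnegative; one should also handle $\pi(\eve^\comp)$ possibly being much smaller than $1/2$, in which case we instead pick a subset $S\subseteq\eve^\comp$ with $\pi(S)$ as large as possible but $\le 1/2$, or more simply note that $\msf C$ is defined as an infimum so it suffices to exhibit \emph{one} set $S$ with small normalized escape mass). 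Actually the cleanest route: take $S\subseteq\eve^\comp$ --- no, we want $S$ inside $\eve$. Let me reconsider.

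The right approach is: pick $S$ to be any measurable subset of $\eve$ with $0 < \pi(S)\le 1/2$; such a set exists because $\pi(\eve)\ge 1/2 > 0$ and $\pi$ is atomless, so we can find subsets of $\eve$ of any prescribed measure in $(0,\pi(\eve)]$, in particular one with $\pi(S) = 1/2 \wedge \pi(\eve)$, hence $\pi(S)\le 1/2$. For this $S$, bound the escape mass using $T(\bx, S^\comp)\le 1 - T(\bx,\{\bx\}) = \int Q(\bx,\by)A(\bx,\by)\,\D\by$ (since $\bx\in S$, the atom at $\bx$ contributes entirely to staying in $S$, so the probability of leaving $S$ is at most the total probability of accepting the proposal); this uses~\eqref{eq:metro_adjust} and Proposition~\ref{prop:basics}. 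Therefore
\begin{align*}
    \msf C
    &\le \frac{\int_S T(\bx, S^\comp)\,\pi(\D\bx)}{\pi(S)}
    \le \frac{1}{\pi(S)}\int_S \Bigl(\int Q(\bx,\by)A(\bx,\by)\,\D\by\Bigr)\pi(\D\bx)
    \le \sup_{\bx\in \eve}\int_{\R^d} Q(\bx,\by)A(\bx,\by)\,\D\by \cdot \frac{\pi(S)}{\pi(S)},
\end{align*}
and since $\pi(S) \ge 1/2\wedge \pi(\eve) \ge$ a positive quantity, we in fact get the bound without the factor $2$; the factor $2$ appears if one only knows $\pi(S)\ge \pi(\eve)/2 \ge 1/4$ type estimates, or if $\pi(\eve)$ is exactly $1/2$ and one wants strict inequality $\pi(S) < 1/2$ in the conductance definition (note~\eqref{eq:C} uses $\pi(S)\le 1/2$, so equality is allowed and no factor $2$ is strictly needed --- but the safe statement includes it).

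The only genuinely delicate point --- and the place I would be most careful --- is the existence of a subset $S\subseteq\eve$ with $\pi(S)$ exactly equal to a prescribed value (e.g.\ $1/4$ or $1/2$), which I would justify by noting $\pi$ has a density with respect to Lebesgue measure and is therefore atomless, so by a standard continuity/Lyapunov-type argument the map $r\mapsto \pi(\eve\cap B_r)$ (intersecting with balls of radius $r$) is continuous and ranges over $[0,\pi(\eve)]$; choosing $r$ so that this equals $1/4$ gives a valid $S$ with $\pi(S)=1/4\le 1/2$, and then $\int_S T(\bx,S^\comp)\,\pi(\D\bx)/\pi(S) = 4\int_S(\cdots)\,\pi(\D\bx)\le 4\cdot\frac14\sup_{\bx\in\eve}\int Q(\bx,\by)A(\bx,\by)\,\D\by$, which is at most the claimed bound once one tracks constants --- here the factor $2$ provides slack. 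This measure-theoretic step is routine but is the only thing not immediate from the MALA kernel structure; everything else is a direct unfolding of definitions~\eqref{eq:C}, \eqref{eq:metro_adjust}, and Proposition~\ref{prop:basics}.
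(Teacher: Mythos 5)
Your proof is correct and follows essentially the same route as the paper: choose $S \subseteq \eve$ with $\pi(S) = 1/2$ (possible since $\pi$ is atomless and $\pi(\eve)\ge 1/2$), bound $T(\bx,S^\comp) \le \int_{\R^d} Q(\bx,\by)A(\bx,\by)\,\D\by$ using the atom at $\bx$ via Proposition~\ref{prop:basics}, and pass to the supremum over $\eve$. Your observation that the factor $2$ is not strictly needed is also consistent with the paper's computation, where that factor simply absorbs the slack from bounding $\int_{\eve_0} f\,\D\pi$ by $\sup_{\eve_0} f$ rather than $\pi(\eve_0)\sup_{\eve_0} f$.
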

\begin{proof}
Let $\eve_0$ be a subset of $\eve$ with $\pi(\eve_0)=1/2$.
From the definition of the conductance~\eqref{eq:C}, 
\begin{align*}
    \msf C &=\inf_{\substack{S \subseteq \R^d \\ \pi(S) \le 1/2}} \frac{\int_S T(\bx, S^\comp) \, \pi(\D \bx)}{\pi(S)} \leq 2\int_{\eve_0}T(\bx, \eve_0^\comp) \, \pi(\D \bx)\\
    &\leq 2\int_{\eve_0} \Bigl( \int_{\eve_0^\comp} Q(\bx, \by) A(\bx, \by) \, \D \by \Bigr) \, \pi(\bx) \, \D \bx 
    \le  2\int_{\eve_0} \Bigl( \int_{\R^d} Q(\bx, \by) A(\bx, \by) \, \D \by \Bigr) \, \pi(\bx) \, \D \bx\\
    &    \le 2 \sup_{\bx \in \eve_0} \int_{\R^d} Q(\bx, \by) A(\bx, \by) \, \D \by \leq  2\sup_{\bx \in \eve} \int_{\R^d} Q(\bx, \by) A(\bx, \by) \, \D \by. 
\end{align*}    
\end{proof}
From Proposition~\ref{prop:upper_conductance}, it therefore suffices to show that there is an event $\eve \subseteq \R^d$ with probability $\pi(\eve)\geq 1/2$ such that
\begin{align*}
     \sup_{\bx \in \eve} \int_{\R^d} Q(\bx, \by) A(\bx, \by) \, \D \by \le \exp[-\Omega(d^{4\delta})]\, .
\end{align*} 
By definition of the Metropolis-Hasting accept-reject step~\eqref{eq:accept_prob}, we have
\begin{align}
    Q(\bx,\by) A(\bx,\by) 
    &= Q(\bx,\by)\min\Bigl\{ 1,  \frac{\pi(\by) Q(\by, \bx)}{\pi(\bx) Q(\bx,\by)}\Bigr\} \nonumber \\
    &\le \frac{\pi(\by) Q(\by, \bx)}{\pi(\bx) } \nonumber \\
    &= \frac{1}{{(4\uppi h)}^{d/2}} \exp\Bigl[ V(\bx) - V(\by) - \frac{\norm{\by-\bx-h \nabla V(\by)}^2}{4h} \Bigr]\,. \label{eq:acc_prop_upper}
\end{align} 
We substitute in the definition of our potential~\eqref{eq:V_lower_bd} and expand out the terms in~\eqref{eq:acc_prop_upper}, grouping them according to whether they involve $\ft$ or not:
\begin{align} 
\eqref{eq:acc_prop_upper} &=\frac{1}{{(4\uppi h)}^{d/2}} \exp\Bigl[ \frac{1}{2}\norm{\bx}^2 - \frac{1}{2}\norm{\by}^2 - \frac{1}{4h} \norm{(1-h)\by-\bx }^2 \Bigr] \label{exp:acc1}\\
&\qquad{} \times \exp\Bigl[ \ft(\bx) - \ft(\by) + \frac{1}{2}\inp{(1-h)\by-\bx}{\nabla \ft(\by)} -  \frac{h}{4}\, \norm{\nabla \ft(\by)}^2 \Bigr]\,. \label{exp:acc2} 
\end{align} 
Some algebra yields that \eqref{exp:acc1} is equal to 
\begin{align*}
    \underbrace{\bigl(\frac{1+h^2}{4\uppi h}\bigr)^{d/2} \exp\Bigl[ -\frac{1+h^2}{4h} \, \bigl\lVert \by-\frac{1-h}{1+h^2} \,\bx\bigr\rVert^2 \Bigr]}_{=:\mux(\by)} \, \frac{1}{{(1+h^2)}^{d/2}}\exp\Bigl[  \frac{h^2\, \norm{\bx}^2}{2\,(1+h^2)} \Bigr]\,.
\end{align*} 
The first term, which we denote by $\mux(\by)$, is the probability density function of the distribution $\Nor (\frac{1-h}{1+h^2}\,\bx, \frac{2h}{1+h^2} I_d)$ evaluated at $\by$.
Using this observation, the quantity $\int_{\R^d} Q(\bx,\by) A(\bx,\by)\, \D \by$ is upper bounded by
\begin{align*}
     \underbrace{\frac{\exp\Bigl[  \frac{h^2 \,\norm{\bx}^2}{2\,(1+h^2)} +\ft(\bx)  \Bigr]}{(1+h^2)^{d/2}}}_{\circled{1}}   \times\underbrace{\Ex_{\by\sim \mux}\exp\Bigl[   - \ft(\by) + \frac{1}{2}\inp{(1-h)\by-\bx}{\nabla \ft(\by)} -  \frac{h}{4}\, \norm{\nabla \ft(\by)}^2   \Bigr]}_{\circled{2}}\,.
\end{align*}

Having this upper bound, we will prove that
there is a set $E \subseteq \R^d$ with $\pi(E) \ge 1/2$ such that the following bounds hold for all $\bx \in E$:
\begin{enumerate}
    \item (Lemma~\ref{lem:control_1})
    $$\tcone \leq \exp\bigl[-\frac{1}{8}d^{1-4\ee} + o(d^{1-4\ee})\bigr] \,.$$ 
    \item (Lemma~\ref{lem:control_2})
    $$\tctwo \leq \exp\bigl[\frac{1}{16} d^{1-4\ee} + o(d^{1-4\ee})\bigr]\,.$$
\end{enumerate}
From these bounds and the preceding calculations, we have
\begin{align*}
    \sup_{\bx \in E} \int Q(\bx,\by) A(\bx,\by) \, \D \by
    &\le \exp\bigl[-\frac{1}{8}d^{1-4\ee} + o(d^{1-4\ee})\bigr] \,.
\end{align*}
This completes the proof of Theorem~\ref{thm:mala_lower_bd_main}.

The next section is devoted to proving the two main bounds (Lemmas~\ref{lem:control_1} and \ref{lem:control_2}).

\subsection{Proofs of technical statements}

\subsubsection{Notation and technical lemmas}
We use the following notation:
\begin{align}\label{def:pi_bad}
\begin{cases}
     \vb(x):= \frac{1}{2}x^2 -\frac{1}{2} d^{-2\ee}\cos(d^\ee x)\,, \\
    \Vb(\bx):= \sum_{i=1}^d \vb(x_i) = \frac{1}{2}\norm{\bx}^2 -\frac{1}{2}d^{-2\ee}\sum_{i=1}^d \cos(d^\ee x_i)\,,\\
    \pib(x)\propto \exp(-\vb(x) ) \,,\\
     \Pib(\bx)\propto \exp(-\Vb(\bx) )\,.
\end{cases}
\end{align}
Thus, $\pi_1$ is the marginal distribution of $\pi$.
We first list useful technical lemmas for proving Lemmas~\ref{lem:control_1} and \ref{lem:control_2}.
First, the following trigonometric inequality will be used several times.
\begin{lemma}\label{lem:trig}
    Let $\xi \sim \Nor (0,1)$, let $p$ be a polynomial, and let $a, b \in \R$, $\cc > 0$ be constants. Then, there exists a constant $C$ (depending on $p$, $a$, $b$, and $\gamma$) such that
    \begin{align*}
        \abs{\E[p(\xi) \sin(a + bd^\cc \xi)]} \le \frac{C}{d}\,.
    \end{align*}
\end{lemma}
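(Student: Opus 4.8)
\textbf{Proof plan for Lemma~\ref{lem:trig}.}

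The plan is to exploit the rapid oscillation of $\sin(a + bd^\gamma\xi)$ against the smooth Gaussian density, which produces a power-of-$d$ gain via integration by parts (equivalently, via the exact formula for the Fourier transform of a Gaussian). First I would reduce to the case where $p$ is a monomial $\xi^m$ by linearity, so it suffices to bound $\abs{\E[\xi^m \sin(a + bd^\gamma\xi)]}$. Writing $\sin(a+bd^\gamma\xi) = \operatorname{Im} e^{i(a+bd^\gamma\xi)}$, the quantity becomes $\operatorname{Im}\bigl(e^{ia}\,\E[\xi^m e^{ibd^\gamma\xi}]\bigr)$, and $\E[\xi^m e^{it\xi}]$ is a classical object: differentiating the Gaussian characteristic function $\E e^{it\xi} = e^{-t^2/2}$ $m$ times in $t$ gives $\E[\xi^m e^{it\xi}] = (-i)^m\,\frac{\D^m}{\D t^m}e^{-t^2/2} = (-i)^m H_m(t)\,e^{-t^2/2}$ (up to sign conventions, with $H_m$ a Hermite-type polynomial of degree $m$).

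Next I would evaluate this at $t = bd^\gamma$. The key point is the factor $e^{-t^2/2} = e^{-b^2 d^{2\gamma}/2}$, which decays \emph{super-polynomially} in $d$ (since $\gamma > 0$ and $b \neq 0$); meanwhile $\abs{H_m(bd^\gamma)}$ grows only polynomially in $d$, of order $d^{m\gamma}$. Hence $\abs{\E[\xi^m e^{ibd^\gamma\xi}]} = \abs{H_m(bd^\gamma)}\,e^{-b^2 d^{2\gamma}/2} \le C_m d^{m\gamma} e^{-b^2 d^{2\gamma}/2}$, and for $d$ large this is $\le C/d$ — in fact it is smaller than any negative power of $d$. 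One must handle the degenerate case $b = 0$ separately: then $\sin(a + bd^\gamma\xi) = \sin a$ is constant and $\E[p(\xi)\sin a] = (\sin a)\,\E p(\xi)$ is simply a constant, so the claimed bound $C/d$ holds trivially only if we absorb this constant into $C$ — but since $C$ is allowed to depend on $p, a, b, \gamma$, we may just take $C$ large enough to cover all small $d$ as well, so the inequality $\abs{\E[p(\xi)\sin(a+bd^\gamma\xi)]} \le C/d$ holds for every $d \ge 1$. (If the intended reading is $b \neq 0$, as the oscillatory phrasing suggests, the $b=0$ case is vacuous.)

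I do not anticipate a genuine obstacle here: the only mild subtlety is bookkeeping the constant $C$ so that the stated inequality holds for \emph{all} $d$ (not just asymptotically), which is resolved by the dependence of $C$ on the data and by choosing $C \ge \max_{1\le d \le d_0} d\,\abs{\E[p(\xi)\sin(a+bd^\gamma\xi)]}$ for a suitable threshold $d_0$ past which the super-polynomial decay dominates. An alternative, slightly slicker route avoiding Hermite polynomials is to write $\E[p(\xi)\sin(a+bd^\gamma\xi)] = \int p(\xi)\sin(a+bd^\gamma\xi)\,\varphi(\xi)\,\D\xi$ with $\varphi$ the standard Gaussian density, and integrate by parts $k$ times: each integration by parts transfers a derivative onto $p\varphi$ and pulls out a factor $1/(bd^\gamma)$, while $p\varphi$ and all its derivatives are bounded and integrable; after $k$ steps one gains $d^{-k\gamma}$, and choosing $k$ with $k\gamma \ge 1$ gives the bound. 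I would present whichever of these is cleanest; the characteristic-function computation is the most transparent.
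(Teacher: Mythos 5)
Your proposal is correct and follows essentially the same route as the paper: reduce to monomials by linearity, write $\sin(a+bd^{\cc}\xi)=\Im\, e^{i(a+bd^{\cc}\xi)}$, and evaluate $\E[\xi^m e^{it\xi}]$ by differentiating the Gaussian characteristic function $e^{-t^2/2}$, so that the factor $e^{-b^2 d^{2\cc}/2}$ yields super-polynomial decay dominating the polynomial Hermite factor. Your additional remarks on the $b=0$ degeneracy and on uniformity of the constant over all $d$ are valid points that the paper leaves implicit (and $b\neq 0$ in every application there).
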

\begin{proof}
The key fact we use  is that the characteristic function $\E[e^{it\xi}]$ of a Gaussian is equal to $e^{-\frac{1}{2}t^2}$. 
First consider the case $p \equiv 1$.
Let $\Im(\cdot)$ denote the imaginary part.
Then, we have
\begin{align*}
    \E[\sin(a + b d^\cc \xi)]
    &= \E[\Im(e^{i\, (a + bd^\cc \xi)})]\\
    &= \Im(e^{ia} \E[e^{i bd^\cc \xi}])\\
    &=\Im\Bigl(\exp\bigl(ia - \frac{b^2 d^{2\cc}}{2}\bigr)\Bigr)\\
    &= \sin(a) \exp\bigl(-\frac{b^2 d^{2\cc}}{2}\bigr)\,.
\end{align*}
It is then clear that the result holds for $p = 1$.
Next, when $p(x)=x^\ell$ for some $\ell \in\N^+$,
\begin{align*}
    \E[\xi^\ell \sin(a + b d^\cc \xi)]
    &= \Im(e^{ia}\E[\xi^\ell e^{i b d^\cc \xi}])\\
    &= \Im\Bigl(e^{ia} \, i^{-\ell} \E\Bigl[\frac{\D^\ell}{\D t^\ell} e^{it \xi}\Big|_{t = b d^\cc} \Bigr]\Bigr)\\
    &= \Im\Bigl(e^{ia} \, i^{-\ell} \, \frac{\D^\ell}{\D t^\ell} e^{-\frac{t^2}{2}}\Big|_{t = bd^\cc}\Bigr).
\end{align*}
Thus, it is clear that the lemma holds for this choice of $p$ too.
The case of a general polynomial follows from linearity.
\end{proof}

Clearly, the statement of the previous lemma can be substantially strengthened, but this will not be necessary for the MALA lower bound.

Now we list some useful facts about the adversarial target  distribution. 

\begin{lemma}\label{lem:pibad}
Assume $\ee < 1/4$.
The following hold for $\pib$ and $\Pib$ defined in \eqref{def:pi_bad}:
\begin{enumerate}[label= (\alph*)]
    \item Let $\zb:= \int_{\R} \exp(-\vb(\bx)) \, \D x$ be the one-dimensional normalizing constant. Then, we have $\zb =\sqrt{2\uppi} +O(d^{-4\ee})$. 
    \item  $ \E_{x\sim \pib} [x^2] \leq 1+ O(d^{-4\ee})$.  Consequently, $\E_{\bx\sim \Pib} [\norm{\bx}^2] \leq d+ O(d^{1-4\ee})$.
    \item $ \E_{x\sim \pib} [\cos(d^\ee x)] \leq \frac{1}{4}d^{-2\ee}+ O(d^{-6\ee})$.
\end{enumerate}
\end{lemma}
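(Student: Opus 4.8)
The plan is to prove the three estimates in Lemma~\ref{lem:pibad} by direct computation, exploiting that $\pib$ is a one-dimensional perturbation of the standard Gaussian controlled by the small parameter $d^{-2\ee}$, together with the oscillatory cancellation provided by Lemma~\ref{lem:trig}. Write $\vb(x) = \tfrac12 x^2 + w(x)$ with $w(x) := -\tfrac12 d^{-2\ee}\cos(d^\ee x)$, so that $\norm{w}_\infty \le \tfrac12 d^{-2\ee}$ and $e^{-w(x)} = 1 - w(x) + O(d^{-4\ee})$ uniformly in $x$. The common theme for all three parts is to expand the exponential of the perturbation to first order, integrate the Gaussian weight against the resulting terms, and observe that the genuinely oscillatory contributions are $O(1/d)$ by Lemma~\ref{lem:trig} while the ``main'' correction is of size $d^{-2\ee}$ times an explicitly computable Gaussian moment.

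First I would handle part (a). Since $\zb = \int e^{-x^2/2} e^{-w(x)}\,\D x$ and $e^{-w(x)} = 1 + \tfrac12 d^{-2\ee}\cos(d^\ee x) + O(d^{-4\ee})$ uniformly, dominated convergence gives
\begin{align*}
    \zb = \sqrt{2\uppi} + \frac{d^{-2\ee}}{2}\int_\R e^{-x^2/2}\cos(d^\ee x)\,\D x + O(d^{-4\ee}).
\end{align*}
The middle integral is $\sqrt{2\uppi}\,\E_{\xi\sim\Nor(0,1)}[\cos(d^\ee\xi)] = \sqrt{2\uppi}\,e^{-d^{2\ee}/2}$, which is super-polynomially small, so $\zb = \sqrt{2\uppi} + O(d^{-4\ee})$ as claimed. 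For part (b), I would write
\begin{align*}
    \E_{x\sim\pib}[x^2] = \frac{1}{\zb}\int_\R x^2 e^{-x^2/2} e^{-w(x)}\,\D x = \frac{1}{\zb}\Bigl(\sqrt{2\uppi} + \frac{d^{-2\ee}}{2}\int_\R x^2 e^{-x^2/2}\cos(d^\ee x)\,\D x + O(d^{-4\ee})\Bigr),
\end{align*}
and again the $\cos$-integral equals $\sqrt{2\uppi}\,\E[\xi^2\cos(d^\ee\xi)]$, which is $O(1/d)$ (in fact exponentially small) by Lemma~\ref{lem:trig}; combining with part (a) for $1/\zb = (2\uppi)^{-1/2}(1 + O(d^{-4\ee}))$ yields $\E_{x\sim\pib}[x^2] = 1 + O(d^{-4\ee})$. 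The consequence for $\Pib$ is immediate by the product structure: $\E_{\bx\sim\Pib}[\norm\bx^2] = d\,\E_{x\sim\pib}[x^2] \le d + O(d^{1-4\ee})$.

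For part (c), the target quantity $\E_{x\sim\pib}[\cos(d^\ee x)]$ is itself oscillatory, so a cruder first-order expansion would only give $O(1)$; here the point is that the \emph{leading} term must come from the interaction of $\cos(d^\ee x)$ in the observable with the $\cos(d^\ee x)$ hidden in $e^{-w(x)}$, via the product-to-sum identity $\cos^2\theta = \tfrac12 + \tfrac12\cos(2\theta)$. Concretely,
\begin{align*}
    \zb\,\E_{x\sim\pib}[\cos(d^\ee x)] = \int_\R \cos(d^\ee x)\,e^{-x^2/2}\Bigl(1 + \tfrac12 d^{-2\ee}\cos(d^\ee x) + O(d^{-4\ee})\Bigr)\,\D x,
\end{align*}
the $O(1)$ term being $\sqrt{2\uppi}\,e^{-d^{2\ee}/2}$ (negligible), and the $O(d^{-2\ee})$ term being
\begin{align*}
    \frac{d^{-2\ee}}{2}\int_\R \cos^2(d^\ee x)\,e^{-x^2/2}\,\D x = \frac{d^{-2\ee}}{2}\int_\R \Bigl(\tfrac12 + \tfrac12\cos(2d^\ee x)\Bigr)e^{-x^2/2}\,\D x = \frac{\sqrt{2\uppi}}{4}\,d^{-2\ee} + O(d^{-2\ee}e^{-2d^{2\ee}}),
\end{align*}
where the $\cos(2d^\ee x)$ piece is again exponentially small by Lemma~\ref{lem:trig}. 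Dividing by $\zb = \sqrt{2\uppi}(1 + O(d^{-4\ee}))$ and carefully tracking the remainder from the $O(d^{-4\ee})$ term in the expansion of $e^{-w}$ (which contributes $O(d^{-4\ee})$ to the numerator after integrating against $\cos(d^\ee x) e^{-x^2/2}$, hence $O(d^{-4\ee})=O(d^{-6\ee})$ is the wrong bookkeeping—one keeps it as an additive $O(d^{-4\ee})$ but since $4\ee < 6\ee$ we must instead bound it by the oscillation lemma to gain the extra $1/d$, giving $O(d^{-1})$ which is dominated by $O(d^{-6\ee})$ for $\ee<1/6$, and for $\ee\in[1/6,1/4)$ one absorbs it into the stated error) gives $\E_{x\sim\pib}[\cos(d^\ee x)] \le \tfrac14 d^{-2\ee} + O(d^{-6\ee})$. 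The main obstacle is precisely this last bookkeeping in part (c): one must be careful that the second-order term $\tfrac12 w(x)^2 = \tfrac18 d^{-4\ee}\cos^2(d^\ee x)$ in the expansion of $e^{-w}$ contributes a \emph{non-oscillatory} piece of exact size proportional to $d^{-4\ee}$ (again via $\cos^2 = \tfrac12 + \tfrac12\cos$), which is what forces the error term to be $O(d^{-6\ee})$ rather than something smaller, and also that all the genuinely oscillatory integrals are handled uniformly by Lemma~\ref{lem:trig} rather than by ad hoc estimates. Everything else is a routine Gaussian-moment computation.
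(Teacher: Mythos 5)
Your proof follows essentially the same route as the paper: Taylor-expand the perturbation $e^{-w}$ with $w(x)=-\tfrac12 d^{-2\ee}\cos(d^\ee x)$, integrate against the Gaussian, and kill the oscillatory integrals via Lemma~\ref{lem:trig} (or the explicit Gaussian characteristic function). Parts (a) and (b) and the identification of the main term $\tfrac14 d^{-2\ee}$ in (c) via $\cos^2\theta=\tfrac12+\tfrac12\cos(2\theta)$ are correct and match the paper's argument.

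The one place you go astray is the error bookkeeping in part (c), where your parenthetical is internally inconsistent. The second-order term $\tfrac12 w^2=\tfrac18 d^{-4\ee}\cos^2(d^\ee x)$, once multiplied by the \emph{observable} $\cos(d^\ee x)$, produces $\tfrac18 d^{-4\ee}\cos^3(d^\ee x)$, and $\cos^3\theta=\tfrac34\cos\theta+\tfrac14\cos(3\theta)$ is \emph{purely oscillatory}; it therefore contributes $O(d^{-4\ee-1})=o(d^{-6\ee})$ by Lemma~\ref{lem:trig}, not a non-oscillatory piece of size $d^{-4\ee}$ as you assert in your closing sentence (that claim would actually contradict the $O(d^{-6\ee})$ bound you are trying to prove). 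What genuinely sets the error at $O(d^{-6\ee})$ is the cubic remainder $O(\norm{w}_\infty^3)=O(d^{-6\ee})$ in the expansion of $e^{-w}$, bounded crudely without any oscillation argument; no case split on $\ee$ versus $1/6$ is needed, and the figure ``$O(d^{-1})$'' is not the right one. This is a presentational muddle rather than a fatal gap: the correct argument is recoverable from the pieces you lay out, and in fact even the cruder $O(d^{-4\ee})$ remainder (which is all the paper's own proof tracks) suffices for every downstream use of the lemma, since only $o(d^{-2\ee})$ is ever needed.
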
 
\begin{proof}
    \begin{itemize}
        \item[(a)] Letting $\xi\sim \Nor (0,1)$, then
    \begin{align*}
        \zb - \sqrt{2\uppi} &=   \int_{\R} \exp\Bigl(-\frac{1}{2} \, x^2 +\frac{1}{2d^{2\ee}} \cos(d^\ee x)\Bigr) \, \D x-\sqrt{2\uppi}\\
        &= \sqrt{2\uppi} \int_{\R}\exp\Bigl(\frac{1}{2d^{2\ee}} \cos(d^\ee x)\Bigr) \, \frac{\exp(-\frac{1}{2}x^2 )}{\sqrt{2\uppi}}\, \D x - \sqrt{2\uppi}\\
        &= \sqrt{2\uppi}\,\Bigl(\E \exp\bigl( \frac{1}{2d^{2\ee}}\, \cos(d^\ee \xi)\bigr) - 1\Bigr)\\
        &= \frac{\sqrt{2\uppi}}{2d^{2\ee}} \E\cos(d^\ee \xi) + O(d^{-4\ee}).
    \end{align*}
By Lemma~\ref{lem:trig}, we have $\abs{\E\cos(d^\ee \xi)} = O(d^{-1}) = o(d^{-4\ee})$, since $\ee < 1/4$. The proof of (a) then follows.
\item[($b$)] Similarly, letting $\xi\sim \Nor (0,1)$,
    \begin{align*}
       \E_{x\sim \pib} [x^2] &= \int x^2 \, \frac{\exp(-\vb(\bx))}{\zb} \, \D x\\
        &= \frac{\sqrt{2\uppi}}{\zb} \E\bigl[\xi^2 \exp\bigl( \frac{1}{2d^{2\ee}} \cos(d^\ee \xi)\bigr)\bigr]\\
         &= \bigl(1+O(d^{-4\ee})\bigr) \E\bigl[\xi^2 \exp\bigl( \frac{1}{2d^{2\ee}} \cos(d^\ee \xi)\bigr)\bigr].
    \end{align*}
    By Taylor expansion,
    \begin{align*}
        \E\bigl[\xi^2 \exp\bigl( \frac{1}{2d^{2\ee}} \cos(d^\ee \xi)\bigr)\bigr]
        &= 1 + \frac{1}{2d^{2\ee}} \E[\xi^2 \cos(d^\ee \xi)] + O( d^{-4\ee}).
    \end{align*}
   Again by Lemma~\ref{lem:trig}, the second term is $O(d^{-(2\ee+1)}) = o(d^{-6\ee})$. Hence, the result follows.
\item[($c$)]  Similarly, it holds that 
    \begin{align*}
        \E_{x\sim \pib}\cos(d^\ee x)
        &= \frac{\sqrt{2\uppi}}{Z} \E\bigl[\cos(d^\ee \xi) \exp\bigl( \frac{1}{2d^{2\ee}} \cos(d^\ee \xi)\bigr)\bigr] \\
        &= \bigl(1 + O( d^{-4\ee})\bigr) \, \Bigl[ \E\cos(d^\ee \xi) + \frac{1}{2d^{2\ee}} \E\cos^2(d^\ee \xi) + O( d^{-4\ee})\Bigr].
    \end{align*}
    By Lemma~\ref{lem:trig}, the first term is $\E\cos(d^\ee \xi) = o(d^{-4\ee})$. Next, the second term is
    \begin{align*}
        \frac{1}{2d^{2\ee}} \E\cos^2(d^\ee \xi)
        &= \frac{1}{4d^{2\ee}} + \frac{1}{4d^{2\ee}} \E\cos(2d^\ee \xi).
    \end{align*}
    From Lemma~\ref{lem:trig}, $\E\cos(2d^\ee \xi) = o(d^{-4\ee})$. Therefore, the result follows.
    \end{itemize} 
\end{proof}

\begin{lemma}\label{lem:supcontrol}
For $\bx\sim \Pib$, the following holds with probability at least $1-1/(4d)$:
\begin{align*}
    \norm{\bx}_\infty < 4\sqrt{\ln(8d)}.
\end{align*}
\end{lemma}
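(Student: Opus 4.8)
The plan is to dominate the one-dimensional density $\pib$ pointwise by a constant multiple of the standard Gaussian density, and then take a union bound over the $d$ independent coordinates of $\Pib$. Recall $\pib(x) = \zb^{-1}\exp\bigl(-\frac12 x^2 + \frac{1}{2d^{2\ee}}\cos(d^\ee x)\bigr)$, where $\zb$ is the one-dimensional normalizing constant of Lemma~\ref{lem:pibad}(a). Since $d^{-2\ee}\le 1$ and $\abs{\cos}\le 1$, the perturbation satisfies $\frac{1}{2d^{2\ee}}\cos(d^\ee x)\in[-\frac12,\frac12]$ for every $x\in\R$ and every $d\ge 1$. The upper end gives $\exp\bigl(-\frac12 x^2 + \frac{1}{2d^{2\ee}}\cos(d^\ee x)\bigr)\le e^{1/2}\exp(-\frac12 x^2)$, while using $\cos\ge -1$ in the integrand defining $\zb$ gives $\zb \ge e^{-1/2}\int_\R \exp(-\frac12 x^2)\,\D x = \sqrt{2\uppi}\,e^{-1/2}$. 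Combining, $\pib(x)\le e\,(2\uppi)^{-1/2}\exp(-\frac12 x^2)$ for all $x\in\R$, i.e.\ $\pib$ is pointwise at most $e$ times the $\Nor(0,1)$ density.

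Next I would transfer the classical Gaussian tail bound through this comparison. The Chernoff bound gives $\Pr(\xi\ge t)\le\exp(-t^2/2)$ for $\xi\sim\Nor(0,1)$ and $t\ge 0$, hence $\Pr(\abs{\xi}\ge t)\le 2\exp(-t^2/2)$, so by the density comparison, for $x\sim\pib$,
\[
    \Pr\bigl(\abs{x}\ge t\bigr) \le e\,\Pr_{\xi\sim\Nor(0,1)}\bigl(\abs{\xi}\ge t\bigr) \le 2e\exp(-t^2/2)\,.
\]
Since $\Pib$ is the $d$-fold product of $\pib$, a union bound over the coordinates yields $\Pr_{\bx\sim\Pib}\bigl(\norm{\bx}_\infty\ge t\bigr)\le 2ed\exp(-t^2/2)$ for all $t\ge 0$. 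Finally, setting $t = 4\sqrt{\ln(8d)}$ gives $\exp(-t^2/2) = (8d)^{-8}$, so the bound becomes $2ed\,(8d)^{-8} = 8e\cdot 8^{-8}\cdot d^{-6}\cdot\frac{1}{4d}\le\frac{1}{4d}$, using $8e\cdot 8^{-8}<1$ and $d\ge 1$; this is precisely the claimed statement.

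The argument is routine, so there is no real obstacle; the only point requiring a bit of care is securing a clean, $d$-independent lower bound on $\zb$, which is why I would use the crude inequality $\cos\ge -1$ rather than the sharper asymptotic expansion of Lemma~\ref{lem:pibad}(a). As an alternative to the density comparison, one could argue directly from regularity: $\vb$ is $\frac12$-strongly convex and even, so $\E_{\pib}x = 0$, and the sub-Gaussian concentration of Lemma~\ref{lem:properties_of_strong_log_concave} applied to $f(x) = \pm x$ with $\alpha = \frac12$ yields the same per-coordinate tail (up to constants), after which the identical union bound closes the proof.
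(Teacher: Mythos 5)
Your proof is correct, and it takes a genuinely different route from the paper's. You exploit the specific structure of $\vb$: because the oscillatory term $\frac{1}{2d^{2\ee}}\cos(d^\ee x)$ is uniformly bounded by $1/2$, the density $\pib$ is pointwise at most $e$ times the standard Gaussian density, so the per-coordinate tail $\Pr(\abs{x}\ge t)\le 2e\exp(-t^2/2)$ follows from the ordinary Gaussian Chernoff bound, and a union bound over the $d$ independent coordinates finishes the job (your arithmetic at $t=4\sqrt{\ln(8d)}$ checks out, with room to spare). The paper instead argues from regularity alone: since $\vb''\ge 1/2$, each $\abs{x_i}$ is stochastically dominated by $\abs\xi$ with $\xi\sim\Nor(0,2)$, and the bound on $\max_i\xi_i$ is obtained via the standard $\E[\max_i\xi_i]\le 2\sqrt{\ln d}$ estimate plus Gaussian concentration of the maximum. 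The trade-off is that your density-domination step leans on the perturbation being uniformly $O(1)$ in the exponent and would not survive an unbounded strongly convex perturbation, whereas the paper's argument uses only the strong convexity of $\vb$ and its minimum at the origin; in exchange, your computation is more elementary, fully explicit in its constants, and avoids the (slightly delicate) stochastic-domination claim. Your suggested alternative via the sub-Gaussian concentration of Lemma~\ref{lem:properties_of_strong_log_concave} applied coordinatewise is essentially the paper's argument in a different packaging and also works.
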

\begin{proof}
    By symmetry, we just need to show that with probability at least $1-1/(8d)$,
    \begin{align*}
        \max_{i\in [d]} x_i < 4\sqrt{\ln d}\,.
    \end{align*}
    Since $V_1'' \geq 1/2$, each $|x_i|$ will be stochastically dominated by $|\xi|$, where $\xi\sim \Nor(0,2)$. Hence, if $\xi_1,\dotsc,\xi_d$ are i.i.d.\ copies of $\xi$, we just need to show that
    \begin{align*}
        \max_{i\in [d]} \xi_i < 4 \sqrt{\ln d}
    \end{align*}
    with probability at least $1-1/d$. The standard argument based on the moment generating function (e.g.~\cite[Lemma 5.1]{van2014probability}) tells us that $\E[\max_{i\in [d]} \xi_i] \leq 2\sqrt{\ln d}$, and Gaussian concentration (e.g.~\cite[Theorem 3.25]{van2014probability}) implies
    \begin{align*}
        \Pr\bigl(\max_{i\in [d]} \xi_i > \E \max_{i \in [d]}  \xi_i + t\bigr) \leq \exp\bigl(-\frac{t^2}{4}\bigr).
    \end{align*}
    Plug in $t = 2\sqrt{\ln(8d)}$ and we get the lemma as claimed.
\end{proof}

Now let us state and prove the technical statements in order.

\subsubsection{Proof of Lemma~\ref{lem:control_1}}

\begin{lemma}\label{lem:control_1} 
Assume that $0 < h \le d^{-1/3}$. Then there exists an event $\eve_1$ with $\Pib(\eve_1)\geq 3/4$ such that for $\bx \in \eve_1$,
    \begin{align*}
     \frac{\exp\Bigl[  \frac{h^2 \, \norm{\bx}^2}{2 \, (1+h^2)} +\ft(\bx)  \Bigr]}{{(1+h^2)}^{d/2}}\leq   \exp\bigl[-\frac{1}{8}d^{1-4\ee} + o(d^{1-4\eta})\bigr]\,.
    \end{align*}
\end{lemma}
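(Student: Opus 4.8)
The plan is to pass to logarithms and reduce the bound to two concentration estimates under $\bx \sim \Pib$: one for $\norm{\bx}^2$ and one for the oscillatory sum $S := \sum_{i=1}^d \cos(d^\ee x_i)$. Writing $L(\bx)$ for the left-hand side of the displayed inequality, and using $\ln(1+u) \ge u - u^2/2$ for $u \ge 0$ together with $1+h^2 \ge 1$,
\begin{align*}
    \ln L(\bx)
    &= \frac{h^2 \norm{\bx}^2}{2\,(1+h^2)} + \ft(\bx) - \frac d2 \ln(1+h^2) \\
    &\le \frac{h^2}{2}\,(\norm{\bx}^2 - d) + \frac{dh^4}{4} + \ft(\bx)\,.
\end{align*}
Since $h \le d^{-1/3}$ we have $dh^4/4 \le d^{-1/3}/4 = o(d^{1-4\ee})$ (recall $1-4\ee = 4\delta > 0$), so it remains to bound the first and third terms on a set of $\Pib$-probability at least $3/4$.

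For the quadratic term I would combine the mean estimate $\E_{\bx\sim\Pib}\norm{\bx}^2 \le d + O(d^{1-4\ee})$ (Lemma~\ref{lem:pibad}(b)) with Chebyshev's inequality. The coordinates of $\bx$ are independent and each marginal $\pib$ is $1/2$-strongly log-concave, so $\E_{x\sim\pib} x^4 = O(1)$ and hence $\Var(\norm{\bx}^2) = \sum_i \Var(x_i^2) \lesssim d$; thus on an event $\eve_1'$ with $\Pib(\eve_1') \ge 7/8$ we get $\norm{\bx}^2 \le d + O(d^{1-4\ee}) + O(\sqrt d)$. Multiplying by $h^2/2 \le d^{-2/3}/2$ yields $\tfrac{h^2}{2}\,(\norm{\bx}^2 - d) \le O(d^{4\delta - 2/3}) + O(d^{-1/6}) = o(d^{1-4\ee})$. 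It is crucial here that the small factor $h^2 \le d^{-2/3}$ absorbs the $\sqrt d$-sized fluctuation of $\norm{\bx}^2$, which on its own is far larger than the target scale $d^{1-4\ee} = d^{4\delta}$.

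For the oscillatory term, $\ft(\bx) = -S/(2d^{2\ee})$ with $\E S = d\,\E_{x\sim\pib}[\cos(d^\ee x)]$. The computation in the proof of Lemma~\ref{lem:pibad}(c) in fact yields the two-sided estimate $\E_{x\sim\pib}[\cos(d^\ee x)] = \tfrac14 d^{-2\ee} + o(d^{-4\ee})$: expanding $\exp(\tfrac1{2d^{2\ee}}\cos(d^\ee x))$ to second order and using $\cos^2 = \tfrac12(1+\cos 2(\cdot))$ produces the main term $\tfrac14 d^{-2\ee}$, while the remaining contributions — an $O(1/d)$ term from Lemma~\ref{lem:trig} (using $\ee < \tfrac14$) together with a Taylor remainder of order $d^{-6\ee}$ — are all $o(d^{-4\ee})$. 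Hence $\E S = \tfrac14 d^{1-2\ee} + o(d^{1-4\ee})$. Since $\abs{\cos}\le 1$ and the coordinates are independent, $\Var S \le d$, so by Chebyshev there is an event $\eve_1''$ with $\Pib(\eve_1'') \ge 7/8$ on which $S \ge \E S - O(\sqrt d)$; dividing by $2d^{2\ee}$ and using $\sqrt d/(2d^{2\ee}) = \tfrac12 d^{1/2-2\ee} = \tfrac12 d^{2\delta} = o(d^{4\delta})$ gives $\ft(\bx) \le -\tfrac18 d^{1-4\ee} + o(d^{1-4\ee})$ on $\eve_1''$. Taking $\eve_1 := \eve_1' \cap \eve_1''$, so that $\Pib(\eve_1)\ge 3/4$, and summing the three bounds, we get $\ln L(\bx) \le -\tfrac18 d^{1-4\ee} + o(d^{1-4\ee})$ for $\bx \in \eve_1$, which is the claim after exponentiating.

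The only genuinely non-routine point — the ``hard part'' — is the error bookkeeping in the near-critical regime $\ee = \tfrac14 - \delta$: both $\norm{\bx}^2$ and $S$ fluctuate on the scale $\sqrt d$, which dominates the target scale $d^{1-4\ee} = d^{4\delta}$, and the argument closes only because these fluctuations enter multiplied by the small prefactors $h^2 \le d^{-2/3}$ and $d^{-2\ee}$. One also needs the matching \emph{lower} bound on $\E_{x\sim\pib}[\cos(d^\ee x)]$, since Lemma~\ref{lem:pibad}(c) is phrased only as an upper bound; this is exactly what pins down the constant $\tfrac18$, and it follows verbatim from the computation already carried out there.
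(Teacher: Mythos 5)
Your proof is correct and follows essentially the same route as the paper: the same split of the left-hand side into the quadratic/normalizing factor and the oscillatory factor $\exp[\ft(\bx)]$, the same inputs from Lemma~\ref{lem:pibad}, and the same $o(d^{1-4\ee})$ bookkeeping. Two points where you deviate are worth recording. First, you correctly observe that what the argument actually needs from Lemma~\ref{lem:pibad}(c) is the \emph{lower} bound $\E_{x\sim\pib}[\cos(d^\ee x)] \ge \tfrac14 d^{-2\ee} - o(d^{-2\ee})$, so that $\sum_{i=1}^d\cos(d^\ee x_i)$ is genuinely large and $\ft$ genuinely negative; the paper relies on the same two-sided estimate, citing ``the calculation in'' that lemma rather than its one-sided statement, and your reading of that calculation is accurate. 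Second, for the concentration steps you use Chebyshev's inequality together with independence of the coordinates (the variance of $\sum_i\cos(d^\ee x_i)$ is at most $d$, and the variance of $\norm{\bx}^2$ is $O(d)$ by bounded fourth moments of the marginals), whereas the paper invokes its generic sub-Gaussian concentration for Lipschitz functions (Lemma~\ref{lem:properties_of_strong_log_concave}). Your route is arguably the safer one for the $\ft$ term: the Lipschitz constant of $\ft$ is of order $d^{1/2-\ee}=d^{1/4+\delta}$, so the generic Lipschitz bound only yields fluctuations of order $d^{1/4+\delta}$, which exceeds the target scale $d^{1-4\ee}=d^{4\delta}$ whenever $\delta<1/12$; the fluctuation bound $O(d^{1/2-2\ee})=O(d^{2\delta})$ that both you and the paper need really does require exploiting the independent, bounded-summand structure, exactly as your Chebyshev argument does. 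All remaining estimates (the $dh^4/4$ term, the absorption of the $O(\sqrt d)$ fluctuation of $\norm{\bx}^2$ by the prefactor $h^2\le d^{-2/3}$, and the union bound giving $\Pib(\eve_1)\ge 3/4$) check out.
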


\begin{proof} We decompose the left-hand side as
\begin{align*}
     \frac{\exp\Bigl[  \frac{h^2 \, \norm{\bx}^2}{2\, (1+h^2)} +\ft(\bx)  \Bigr]}{{(1+h^2)}^{d/2}} = \frac{1}{{(1+h^2)}^{d/2}}\exp\Bigl[  \frac{h^2 \,\norm{\bx}^2}{2 \, (1+h^2)}   \Bigr] \times \exp[\ft(\bx)]
\end{align*}
and bound each term separately. 

We begin with the first term.
By Lemma~\ref{lem:pibad}-(b), we know that  the second moment of $\Pib$ is $d + O(d^{1-4\ee})$.
Since $\pi$ is $1/2$-strongly log concave, a standard concentration argument (see e.g.\ Lemma~\ref{lem:properties_of_strong_log_concave}) shows that there exists a subset $\eve_1'$ with $\pi(\eve_1')\geq 7/8$  such that for $\bx\in \eve_1'$,
\begin{align*}
    \normb{ \bx}^2 \le \,d + O(d^{1-4\ee}) + O(d^{1/2})\,.
\end{align*}
Now, using the fact that $\ln(1+x)\ge x-x^2/2$ for $x\geq 0$,
\begin{align*}
    \frac{1}{{(1+h^2)}^{d/2}} \exp\Bigl[ \frac{h^2 \, \norm \bx^2}{2 \, (1+h^2)} \Bigr]
    &\le \exp\Bigl[ \frac{h^2 \, (d+O(d^{1-4\ee}) + O(d^{1/2}))}{2\, (1+h^2)} - \frac{d}{2} \ln(1+h^2)\Bigr] \\
    &\le \exp\Bigl[ \frac{h^2 \, (d+O(d^{1-4\ee}) + O(d^{1/2}))}{2\, (1+h^2)} - \frac{dh^2}{2}+ \frac{dh^4}{4}\Bigr] \\
      &= \exp\Bigl[ \frac{h^2 \, (O(d^{1-4\ee}) + O(d^{1/2}))}{2\, (1+h^2)} - \frac{dh^4}{2\, (1+h^2)}+  \frac{dh^4}{4}\Bigr] \\
        &= \exp\Bigl[ \frac{h^2 \, (O(d^{1-4\ee}) + O(d^{1/2}))}{2\, (1+h^2)} + \frac{-dh^4 + 2dh^6}{4\, (1+h^2)} \Bigr] \\
    &\le \exp[ O(d^{1-4\ee}h^2) + O(d^{1/2} h^2) ]  \,.
\end{align*}
where the last line follows since $h^2 \leq 1/2$. 
In order to show that the exponent of the above term is $o(d^{1-4\ee})$, we must check that $d^{1/2} h^2 = o(d^{1-4\eta})$, which holds if $h = o(d^{1/4-2\ee}) = o(d^{-1/4 + 2\delta})$. This indeed follows from our assumption that $h \le d^{-1/3}$.



Next, we move on to the second term. Recall from the calculation in Lemma~\ref{lem:pibad}-(c) that
 $ \E_{x\sim \pib} [\cos(d^\ee x)] \leq \frac{1}{4}d^{-2\ee}+ O(d^{-6\ee})$.
 Hence,
 it follows that
 \begin{align*}
     \Ex_{\bx\sim\pi} [\ft(\bx)] = -\frac{1}{2d^{2\ee}} \sum_{i=1}^d \E_{x_i \sim \pib} \cos(d^\ee x_i) = -\frac{1}{8}d^{1-4\ee}+ O(d^{1-8\ee}).
 \end{align*}
Since $\pi$ is $1/2$-strongly log-concave, another sub-Gaussian concentration argument (Lemma~\ref{lem:properties_of_strong_log_concave}) shows that there exists a subset $\eve_1''$ with $\pi(\eve_1'')\geq 7/8$  such that for $\bx \in \eve_1''$, 
\begin{align*}
    \exp[\ft(\bx)]\leq \exp\bigl[-\frac{1}{8}d^{1-4\ee} +O(d^{1-8\ee}) + O(d^{1/2-2\eta})\bigr] \leq \exp\bigl[-\frac{1}{8}d^{1-4\ee} +o(d^{1-4\ee}  )\bigr]\,,
\end{align*} 
since $1-4\eta > 0$ by the hypothesis.

Now taking $\eve_1 :=\eve_1'\cap \eve_1''$, the above calculations show that for $\bx\in \eve_1$, 
\begin{align*}
    \frac{\exp\Bigl[  \frac{h^2 \,\norm{\bx}^2}{2\,(1+h^2)} +\ft(\bx)  \Bigr]}{{(1+h^2)}^{d/2}}&\le \exp\bigl[-\frac{1}{8}d^{1-4\ee} +o(d^{1-4\ee} )\bigr]\,,
\end{align*}
which completes the proof.
\end{proof}

\subsubsection{Proof of Lemma~\ref{lem:control_2}}

\begin{lemma}\label{lem:control_2}
Assume that $h \in [  d^{-\frac1 2 + 3\delta}, d^{-\frac{1}{3}}]$.
Then there exists an event $\eve_2$ with $\Pib(\eve_2)\geq 3/4$ such that for $\bx \in \eve_2$,
     \begin{align*}
        &\Ex_{\by\sim \mux}\exp\Bigl[   - \ft(\by) + \frac{1}{2}\inp{(1-h)\by-\bx}{\nabla \ft(\by)} -  \frac{h}{4}\, \norm{\nabla \ft(\by)}^2   \Bigr] \\
        &\qquad\qquad\qquad\qquad\qquad\qquad\qquad\qquad\qquad\qquad \leq \exp\bigl[\frac{1}{16}d^{1-4\ee}  +o(d^{1-4\ee}) \bigr]\,.
    \end{align*}
\end{lemma}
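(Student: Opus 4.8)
The plan is to exploit the product structure of both the potential and the Gaussian proposal $\mux$ so as to reduce the claimed bound to a one-dimensional estimate, which is then summed over the $d$ coordinates after taking logarithms. Writing $w(y):=-\tfrac{1}{2d^{2\ee}}\cos(d^\ee y)$ for the one-dimensional perturbation --- so that $\ft(\by)=\sum_{i=1}^d w(y_i)$ and $\nabla\ft(\by)=(w'(y_1),\dots,w'(y_d))$ with $w'(y)=\tfrac{1}{2d^\ee}\sin(d^\ee y)$ --- and using that $\mux$ is a product measure whose $i$-th marginal is $\mu_{x_i}:=\Nor\bigl(\tfrac{1-h}{1+h^2}x_i,\tfrac{2h}{1+h^2}\bigr)$, the expectation in question factorizes as $\prod_{i=1}^d\E_{y\sim\mu_{x_i}}\exp[Z_i(y)]$ with $Z_i(y):=-w(y)+\tfrac12\bigl((1-h)y-x_i\bigr)w'(y)-\tfrac h4\,w'(y)^2$. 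So it suffices to show $\sum_{i=1}^d\log\E_{y\sim\mu_{x_i}}\exp[Z_i(y)]\le\tfrac{1}{16}d^{1-4\ee}+o(d^{1-4\ee})$, uniformly over a set $\eve_2$ with $\Pib(\eve_2)\ge 3/4$; for $\eve_2$ I would take the event $\{\norm{\bx}_\infty\le 4\sqrt{\ln(8d)}\}$ of Lemma~\ref{lem:supcontrol}, which has probability at least $1-1/(4d)$.

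Fixing $\bx\in\eve_2$, I would use $\log t\le t-1$ and the Taylor bound $e^Z\le 1+Z+\tfrac12 Z^2+\tfrac16\abs{Z}^3 e^{\abs{Z}}$ to reduce matters to estimating $\sum_i\E Z_i$, $\sum_i\E Z_i^2$, and a cubic remainder. Expanding $w,w'$ and using $w'(y)^2=\tfrac{1}{8d^{2\ee}}\bigl(1-\cos(2d^\ee y)\bigr)$, every term appearing in $Z_i$ or $Z_i^2$ is a polynomial in $y$ of degree $\le 4$ (with coefficients polynomial in $x_i,h,d^{-\ee}$) times one of $1,\cos(kd^\ee y),\sin(kd^\ee y)$, $k\le 4$. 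The key point is that, since $y\sim\Nor(m,\sigma^2)$ gives $\E[\cos(ty)]=\cos(tm)e^{-t^2\sigma^2/2}$ and $\E[\sin(ty)]=\sin(tm)e^{-t^2\sigma^2/2}$ (a quantitative version of Lemma~\ref{lem:trig}), any term carrying a genuine oscillation has expectation at most $d^{O(1)}$ times $e^{-k^2 d^{2\ee}\sigma^2/2}$; as $\sigma^2=\tfrac{2h}{1+h^2}\ge h$ and $\ee=1/4-\delta$, the hypothesis $h\ge d^{-1/2+3\delta}$ yields $d^{2\ee}\sigma^2\ge d^{2\ee}h\ge d^{\delta}$, so (with $\norm{\bx}_\infty\lesssim\sqrt{\ln d}$ on $\eve_2$ keeping the prefactors polynomial) all oscillatory contributions are $o(1)$ after summing over the coordinates.

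What remains are the ``DC'' parts: in $\E Z_i$, only the deterministic piece $-\tfrac{h}{32d^{2\ee}}$ of $-\tfrac h4 w'(y)^2$; in $\E Z_i^2$, the constant $\tfrac12$ from $\cos^2(d^\ee y)$ in $(-w)^2$, the constant $\tfrac12$ from $\sin^2(d^\ee y)$ in the squared cross-term $\tfrac14\bigl((1-h)y-x_i\bigr)^2 w'(y)^2$, and an $O(h^2 d^{-4\ee})$ term from squaring $-\tfrac h4 w'(y)^2$ (all other products of $Z_i$'s pieces are purely oscillatory). This gives $\E Z_i=-\tfrac{h}{32d^{2\ee}}+o(d^{-4\ee})$ and $\tfrac12\E Z_i^2=\tfrac{1}{16d^{4\ee}}+\tfrac{1}{64d^{2\ee}}\E\bigl[((1-h)y-x_i)^2\bigr]+O\bigl(\tfrac{h^2}{d^{4\ee}}\bigr)+o(d^{-4\ee})$, with $\E[((1-h)y-x_i)^2]=\tfrac{2h(1-h)^2}{1+h^2}+\tfrac{4h^2 x_i^2}{(1+h^2)^2}$. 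Summing over $i$, the $\Theta(hd^{1-2\ee})$ part of $\sum_i\E Z_i$ exactly cancels the $\Theta(hd^{1-2\ee})$ part of $\sum_i\tfrac{1}{64d^{2\ee}}\cdot\tfrac{2h(1-h)^2}{1+h^2}$ up to $O(h^2 d^{1-2\ee})$; since $h\le d^{-1/3}$ gives $h^2 d^{2\ee}\ln d=O(d^{-1/6-2\delta}\ln d)=o(1)$ and $h^2=o(1)$, the leftover terms ($O(h^2 d^{1-2\ee})$, the $x_i^2$-term $O(h^2 d^{1-2\ee}\ln d)$ on $\eve_2$, and $O(h^2 d^{1-4\ee})$) are all $o(d^{1-4\ee})$. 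Hence $\sum_i(\E Z_i+\tfrac12\E Z_i^2)=\sum_i\tfrac{1}{16d^{4\ee}}+o(d^{1-4\ee})=\tfrac{1}{16}d^{1-4\ee}+o(d^{1-4\ee})$. For the cubic remainder, the pointwise bound $\abs{Z_i}\le\tfrac{c}{d^{2\ee}}+\tfrac{\abs{(1-h)y-x_i}}{4d^\ee}$ together with $(1-h)y-x_i=-\tfrac{2h}{1+h^2}x_i+(1-h)\sqrt{\tfrac{2h}{1+h^2}}\,\xi$, $\xi\sim\Nor(0,1)$, and Gaussian moment/exponential-moment estimates give $\E[\abs{Z_i}^3 e^{\abs{Z_i}}]=O(h^{3/2}d^{-3\ee}(\ln d)^{O(1)})$ on $\eve_2$, so $\sum_i\tfrac16\E[\abs{Z_i}^3 e^{\abs{Z_i}}]=O(h^{3/2}d^{1-3\ee}(\ln d)^{O(1)})=o(d^{1-4\ee})$ because $h^{3/2}d^{\ee}\le d^{-1/2+1/4-\delta}=o(1)$. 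Adding the three estimates yields the claim.

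The hard part is the second-moment computation in the third step: one must evaluate $\E Z_i$ and $\E Z_i^2$ precisely to order $d^{-4\ee}$, carefully keeping the $(1+h^2)$-corrections in the mean and variance of $\mu_{x_i}$, and --- most importantly --- one must notice the cancellation between the deterministic $-\tfrac h4\E[w'(y)^2]$ term and the $\sin^2$-average of the squared cross-term; without it a spurious $\Theta(hd^{1-2\ee})\gg d^{1-4\ee}$ contribution would survive. The second delicate point is making the ``oscillatory integrals are super-polynomially small'' bound uniform in $\bx$ over all the term types that occur, which is precisely where the restriction $\norm{\bx}_\infty\lesssim\sqrt{\ln d}$ and the lower bound $h\ge d^{-1/2+3\delta}$ (so that $d^{2\ee}h\ge d^{\delta}$) enter.
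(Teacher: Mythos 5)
Your proposal is correct and follows essentially the same route as the paper's proof: factorize over coordinates, reparametrize $y_i$ via a standard Gaussian, Taylor-expand the exponential to second order, kill all oscillatory terms using the Gaussian characteristic function (which is where $h \ge d^{-1/2+3\delta}$ enters, exactly as in Lemma~\ref{lem:trig}), and identify the crucial cancellation between the $-\frac{h}{32d^{2\ee}}$ average of $-\frac{h}{4}\|\nabla \ft\|^2$ and the $+\frac{\hht}{32d^{2\ee}}$ average of the squared cross-term. Your handling of the cubic remainder via $e^Z \le 1+Z+\frac12 Z^2+\frac16|Z|^3e^{|Z|}$ is slightly more explicit than the paper's order-of-magnitude argument, but the substance is identical.
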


\begin{proof} 
Recall the definition $\ft(\bx) = - \frac{1}{2}d^{-2\ee} \sum_{i=1}^d \cos(d^\ee x_i)$.
Since $\ft$ is separable, it suffices to consider the following quantity: for $\mu_{x_i}:=\Nor(\frac{1-h}{1+h^2} \, x_i,\frac{2h}{1+h^2})$,
\begin{align} \label{exp:1d}
    \max_{i\in [d]} \Ex_{y_i\sim \mu_{x_i}}\exp \Bigl(  \frac{\cos(d^\ee y_i)}{2d^{2\ee}}   + \frac{((1-h)y_i - x_i ) \sin(d^\ee y_i) }{4d^\ee}   - \frac{h\sin^2(d^\ee y_i) }{16 d^{2\ee}}  \Bigr) \,.
\end{align}
Indeed, the lemma is proved as soon as we show
\begin{align}\label{ineq:goal}
    \eqref{exp:1d} \leq \exp\bigl[\frac{1}{16}d^{-4\ee}  +o(d^{-4\ee}) \bigr]\,.
\end{align}
For the proof, we will therefore work with a single coordinate; for simplicity of notation, we will use the first coordinate.

To prove the inequality \eqref{ineq:goal}, let us first simplify the expression \eqref{exp:1d}.
Letting $\xi \sim \Nor(0,1)$, we can equivalently write $y_1= \frac{1-h}{1+h^2} \, x_1 +\sqrt{\frac{2h}{1+h^2}}\, \xi$.
From this, we get
\begin{align*}
    (1-h)y_1-x_1 = -\frac{2h}{1+h^2} \, x_1+(1-h)\sqrt{\frac{2h}{1+h^2}}\, \xi.
\end{align*}
Since our regime of interest is $h=o(1)$, we simplify the notation by defining
\begin{align*}
\hho:= \frac{h}{1+h^2} \qquad \text{and}\qquad \hht := \frac{{(1-h)}^2}{1+h^2}\,h\,,
\end{align*}
and treat them as being on the same order as $h$.
Using these simplifying notations and rearranging, we are left to consider 
\begin{align} \label{exp:1d_2}
         \Ex \exp \Bigl( \underbrace{\frac{\cos(d^\ee y_1)}{2d^{2\ee}}}_{=:\aaa}  - \underbrace{\frac{h\sin^2(d^\ee y_1) }{16 d^{2\ee}}}_{=:\bbb}   - \underbrace{\frac{2\hho x_1 \sin(d^\ee y_1) }{4d^\ee}}_{=:\ccc} +\underbrace{\frac{\sqrt{2\hht}\xi \sin(d^\ee y_1) }{4d^\ee}}_{=:\ddd}   \Bigr) \,,
\end{align}
where $y_1= \frac{1-h}{1+h^2} \, x_1 +\sqrt{\frac{2h}{1+h^2}}\, \xi$.
Now we will estimate \eqref{exp:1d_2} by a Taylor expansion.

Throughout, we will assume $\norm{\bx}_\infty \le 4\sqrt{\ln(8d)}$. By Lemma~\ref{lem:supcontrol}, this holds on an event $E_2$ of probability $\pi(E_2) \ge 3/4$.
From this, we note the immediate bounds
\begin{align*}
    |\aaa|= O(d^{-2\ee}), \qquad |\bbb| =O(d^{-2\ee}h), \qquad |\ccc| = \widetilde O(d^{-\ee }h), \qquad |\ddd|=O_{\msf p}(d^{-\ee}\sqrt{h}).
\end{align*}
Here, $O_{\msf p}$ denotes probabilistic big-O notation. Using  $h=O(d^{-1/3}) =o(d^{-4\ee/3})$, we have
\begin{align}\label{eq:order_of_terms}
    |\aaa|= O(d^{-2\ee}), \quad |\bbb| =o(d^{-(3+1/3)\ee}), \quad |\ccc| =  o(d^{-(2+1/3)\ee}), \quad |\ddd|=o_{\msf p}(d^{-(1+2/3)\ee}).
\end{align}
From, this, we see that the third- or higher-order terms in the Taylor expansion, after taking the expectation, are $o(d^{-5\ee})$. Indeed, the dominant term is $\E[\abs{\ddd}^3] = o(d^{-5\ee})$.

We also note that the common argument of the trigonometric terms is
\begin{align*}
    d^\eta y_1
    &= d^\eta \, \frac{1-h}{1+h^2} \, x_1 + d^\eta \sqrt{\frac{2h}{1+h^2}} \, \xi \,,
\end{align*}
so the coefficient in front of $\xi$ is of order $d^{\ee}\sqrt{h} =\Omega(d^{\delta/2})$ by the assumption $h\geq d^{-\frac1 2 + 3\delta}$.
Thus, the trigonometric terms precisely fit into the setting of Lemma~\ref{lem:trig}, and we will   apply Lemma~\ref{lem:trig} to estimate these terms.

Now let us estimate the terms of order one and two.
\begin{itemize}
    \item \emph{First- and lower-order terms.} We have
    \begin{align*}
        (\text{$\leq 1$st order})=1+ \E \aaa - \E \bbb -\E\ccc +\E \ddd\,.
    \end{align*}
    By Lemma~\ref{lem:trig}, we know $\E\aaa = O(d^{-1-2\ee}) = o(d^{-6\eta})$.
    For $\E \bbb$, we have
\begin{align*}
    -\E\bbb   = - \frac{h}{32d^{2\ee}} +\frac{h}{32d^{2\ee}}\E \cos(2d^\ee y_1) = - \frac{h}{32d^{2\ee}} + o(d^{-6\ee}),
\end{align*}
where we use Lemma~\ref{lem:trig} again. For $\E\ccc$, we have
\begin{align*}
 -\E \ccc = -\E \frac{2\hho x_1  \sin(d^\ee y_1)}{4d^\ee} 
 = \widetilde O(d^{-(1+\ee)}h)
 = o(d^{-5\eta}),
\end{align*}
where the last line is due to Lemmas~\ref{lem:trig} and \ref{lem:supcontrol}.  For $\E\ddd$, we have
\begin{align*}
    \E\ddd&= \E \frac{\sqrt{2\hht} \xi \sin(d^\ee y_1)}{4d^\ee} 
    = O(d^{-(1+\ee )}\sqrt{h})
    = o(d^{-5\ee}),
\end{align*}
where we use Lemma~\ref{lem:trig}.
Collecting together the terms, we have
\begin{align}\label{exp:1storder}
     (\text{$\leq 1$st order})=1-\frac{h}{32d^{2\ee}} + o(d^{-5\ee}).
\end{align}
\item \emph{Second-order terms.} For the reader's convenience, we have organized the terms which appear in the second-order Taylor expansion as Table~\ref{tab:second_order_terms}.

\begin{table}[h]
    \centering
    \begin{tabular}{c|cccc}
    & $O(d^{-2\eta})$ & $o(d^{-(3+1/3)\eta})$ & $o(d^{-(2+1/3) \eta})$ & $o_{\msf p}(d^{-(1+2/3)\eta})$ \\
    \hline \\
    $O(d^{-2\eta})$ & \eqref{eq:tab_term_1} & $o(d^{-4\eta})$ & $o(d^{-4\eta})$ & \eqref{eq:tab_term_2} \\
    $o(d^{-(3+1/3)\eta})$ & & $o(d^{-4\eta})$ & $o(d^{-4\eta})$ & $o_{\msf p}(d^{-4\eta})$ \\
    $o(d^{-(2+1/3) \eta})$ & & & $o(d^{-4\eta})$ & $o_{\msf p}(d^{-4\eta})$ \\
    $o_{\msf p}(d^{-(1+2/3)\eta})$ & & & & \eqref{eq:tab_term_3}
    \end{tabular}
    \caption{Terms which appear in the second-order Taylor expansion. The rows and columns are indexed by the terms $\aaa$, $\bbb$, $\ccc$, $\ddd$; refer to~\eqref{eq:order_of_terms}.}
    \label{tab:second_order_terms}
\end{table}
We now estimate the terms which are not covered by the table.
    Let us estimate the remaining terms one by one. First, by Lemma~\ref{lem:trig},
\begin{align}\label{eq:tab_term_1}
  \frac{1}{2}\E[\aaa^2] &=  \E\frac{\cos^2(d^\ee y_1)}{8d^{4\ee}} = \frac{1}{16d^{4\ee}} +  \E \frac{\cos(2d^\ee y_1)}{16d^{4\ee}} =\frac{1}{16d^{4\ee}} + o(d^{-8\eta})\,. 
\end{align} 
Next, by Lemma~\ref{lem:trig},
\begin{align}\label{eq:tab_term_2}
    \E[\aaa\ddd] &=\E\bigl[\frac{\sqrt{2\hht} \xi}{8 d^{3\ee}} \cos(d^\ee y_1)\sin(d^\ee y_1)\bigr] = \frac{\sqrt{2\hht}}{16 d^{3\ee}} \E[\xi \sin(2d^\ee y_1)] = o(d^{-7\ee}).
\end{align} 
Lastly, invoking Lemma~\ref{lem:trig} yet again,
 \begin{align}\label{eq:tab_term_3}
    \frac{1}{2}\E[\ddd^2] &= \E \frac{\hht \xi^2 \sin^2(d^\ee y_1)}{16d^{2\ee}}
    = \E\frac{\hht\xi^2}{32d^{2\ee}} - \E \frac{\hht \xi^2 \cos(2d^\ee y_1)}{32d^{2\ee}}  = \frac{\hht}{32 d^{2\ee}} + o(d^{-6\eta}).
\end{align}

Combining all together, we obtain,
  \begin{align} \label{exp:2ndorder}
        (\text{$2$nd order})&=  \frac{1}{16d^{4\ee}} + \frac{\hht}{32 d^{2\ee}} +o(d^{-4\ee})\,.
    \end{align}
\end{itemize}
Therefore, we combine \eqref{exp:1storder} and \eqref{exp:2ndorder} to conclude  
\begin{align*}
    \eqref{exp:1d_2} &\leq \exp\bigl[\frac{1}{16}d^{-4\ee}-\frac{h}{32d^{2\ee}}  + \frac{\hht}{32 d^{2\ee}} +o(d^{-4\ee}) \bigr]\\
    &=\exp\bigl[\frac{1}{16}d^{-4\ee}  +o(d^{-4\ee}) \bigr]\,,
\end{align*}
where the last line follows from the fact $\hht -h = \frac{(1-h)^2}{1+h^2}\, h -h \leq 0$. This implies \eqref{ineq:goal}, and hence the proof is complete. \end{proof}

\subsection{Upper bound on the spectral gap}\label{scn:spectral_gap_upper_bd}

Note that when $\ee < 1/4$, the adversarial potential defined in~\eqref{def:pi_bad} satisfies the assumptions of the following theorem, as a consequence of our computation in Lemma~\ref{lem:pibad}.

\begin{theorem}\label{thm:upper_bd_gap_separable}
    Consider a potential $V : \R^d \to\R$ which is separable: $V(\bx) = \sum_{i=1}^d v(x_i)$ for a function $v : \R\to\R$. Assume that:
    \begin{itemize}
        \item $V$ is symmetric about the origin, and $V(\bs 0) = \min V$.
        \item $V$ is $O(1)$-smooth.
        \item For the distribution $\pi_1 \propto \exp(-v)$, we have $\E_{x\sim \pi_1}[x^2] \asymp 1$.
    \end{itemize}
    Then, spectral gap of MALA with target distribution $\pi \propto \exp(-V)$ and step size $h\le 1$ satisfies
    \begin{align*}
        \lambda
        &\lesssim h\,.
    \end{align*}
\end{theorem}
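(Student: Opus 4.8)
The plan is to certify the bound $\lambda \lesssim h$ with a single explicit test function. Because $V$ is separable and symmetric about the origin, the natural choice is the first coordinate $f(\bx) := x_1$. It lies in $L^2(\pi)$ since $\E_\pi[f^2] = \E_{x \sim \pi_1}[x^2] \asymp 1 < \infty$, and symmetry of $\pi_1$ about the origin gives $\E_{\pi_1}[x] = 0$, so that $\Var_\pi f = \E_{\pi_1}[x^2] \asymp 1$. It therefore remains only to show $\mc E(f,f) \lesssim h$, after which $\lambda \le \mc E(f,f)/\Var_\pi f \lesssim h$.

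To bound the Dirichlet form, I would use the jump representation available for any kernel reversible with respect to $\pi$, namely $\mc E(f,f) = \frac12 \iint (f(\bx)-f(\by))^2 \, T(\bx, \D \by) \, \pi(\D \bx)$. The atom of $T$ at $\bx$ contributes nothing to this integral, and the key simplification is that we may discard the accept/reject factor entirely by bounding $A(\bx,\by) \le 1$, which yields
\[
    \mc E(f,f) \le \frac12 \E_{\bx \sim \pi} \int (y_1 - x_1)^2 \, Q(\bx, \by) \, \D \by .
\]
Thus, unlike in the upper bound, no estimate of the acceptance probability is needed; one only integrates against the Gaussian proposal. Since $Q(\bx, \cdot) = \Nor(\bx - h \nabla V(\bx), 2h \, I_d)$, the first coordinate of the proposal is $y_1 = x_1 - h \, v'(x_1) + \sqrt{2h}\,\xi_1$ with $\xi_1 \sim \Nor(0,1)$, so the inner integral equals $h^2 \, v'(x_1)^2 + 2h$ exactly, giving $\mc E(f,f) \le \tfrac{h^2}{2}\, \E_{x \sim \pi_1}[v'(x)^2] + h$.

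It then suffices to bound $\E_{\pi_1}[v'(x)^2]$ by an absolute constant. Since $V(\bs 0) = \min V$ forces $\nabla V(\bs 0) = \bs 0$, hence $v'(0) = 0$, the $O(1)$-smoothness of $V$ gives $\lvert v'(x)\rvert \lesssim \lvert x\rvert$, and therefore $\E_{\pi_1}[v'(x)^2] \lesssim \E_{\pi_1}[x^2] \asymp 1$. Combined with $h \le 1$, this gives $\mc E(f,f) \lesssim h^2 + h \lesssim h$, completing the proof. (Theorem~\ref{thm:spectral_gap_upper_bd} then follows by checking via Lemma~\ref{lem:pibad} that $\pi_\eta$ meets the hypotheses.) I do not expect a genuine obstacle here: the only points needing care are the identification of $\mc E(f,f)$ with the jump representation, the observation that the crude bound $A \le 1$ already suffices, and the exact Gaussian moment computation. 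The separable symmetric structure is precisely what makes $\Var_\pi f \asymp 1$ transparent, and the hypotheses on $\pi_1$ enter only through this variance lower bound and the moment bound $\E_{\pi_1}[x^2] \asymp 1$.
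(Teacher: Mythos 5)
Your proposal is correct and follows essentially the same route as the paper: the test function $f(\bx)=x_1$, the bound $A(\bx,\by)\le 1$ to replace $T$ by the Gaussian proposal $Q$, the exact second-moment computation $h^2\,v'(x_1)^2+2h$, and the smoothness bound $|v'(x)|\lesssim|x|$ combined with $\E_{\pi_1}[x^2]\asymp 1$. The only cosmetic difference is that you make the jump representation of the Dirichlet form explicit, whereas the paper passes directly from the variational characterization to $\E[(x_1-y_1)^2]$.
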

\begin{proof}
    Consider the function $f : \R^d\to\R$ given by $f(\bx) := x_1$. Since $V$ is symmetric about the origin, we have $\E_\pi f = 0$.

    From the definition the spectral gap~\eqref{eq:lambda},
    \begin{align*}
        \lambda
        &\le \frac{\E_\pi[f \, ({\id} - T) f]}{\E_\pi[f^2]}
        \lesssim \E\displaylimits_{\substack{\bx \sim \pi \\ \by \sim T(\bx,\cdot)}}[{(x_1 - y_1)}^2]\,.
    \end{align*}
    Next, using the definition of the MALA kernel $T$, if $\xi$ is a standard Gaussian random variable, then
    \begin{align*}
        \E\displaylimits_{\substack{\bx \sim \pi \\ \by \sim T(\bx,\cdot)}}[{(x_1 - y_1)}^2]
        &= \E\displaylimits_{\substack{\bx \sim \pi \\ \by \sim Q(\bx,\cdot)}}[{(x_1 - y_1)}^2 \one_{\text{proposal}~\bx \to \by~\text{is accepted}}] \\
        &\le \E\displaylimits_{\substack{\bx \sim \pi \\ \by \sim Q(\bx,\cdot)}}[{(x_1 - y_1)}^2]
        = \E\displaylimits_{\bx \sim \pi}[{\{hv'(x_1) - \sqrt{2h} \xi\}}^2] \\
        &\le 2h^2 \E\displaylimits_{\bx \sim \pi}[{v'(x_1)}^2] + 4h \E[\xi^2]
        \lesssim h^2 \E\displaylimits_{\bx \sim \pi}[x_1^2] + h
        \lesssim h\,,
    \end{align*}
    by our assumptions. This completes the proof.
\end{proof}

\subsection{Auxiliary lemmas}

\begin{lemma}\label{lem:kl_of_adversarial}
    Let $\gamma := \Nor(0, I_d)$ and let $\pi$ be the adversarial target distribution defined in~\eqref{def:pi_bad}.
    Then,
    \begin{align*}
        \KL(\gamma \mmid \pi)
        &\le O(d^{1-4\ee}).
    \end{align*}
\end{lemma}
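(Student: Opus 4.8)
The plan is to exploit the product structure of both $\gamma$ and $\pi$ to reduce to a one-dimensional computation, and then to feed in the sharp normalizing-constant estimate already established in Lemma~\ref{lem:pibad}.

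\textbf{Step 1: Tensorization.} Since $\gamma = \Nor(0,1)^{\otimes d}$ and, by~\eqref{def:pi_bad}, $\pi = \Pib = \pib^{\otimes d}$, the KL divergence tensorizes: $\KL(\gamma \mmid \pi) = d \, \KL(\Nor(0,1) \mmid \pib)$. It therefore suffices to show $\KL(\Nor(0,1) \mmid \pib) = O(d^{-4\ee})$.

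\textbf{Step 2: Explicit log-ratio and cancellation of the quadratic term.} Writing $\gamma_1 := \Nor(0,1)$, the densities are $\gamma_1(x) = (2\uppi)^{-1/2}\exp(-x^2/2)$ and $\pib(x) = \zb^{-1}\exp(-\tfrac12 x^2 + \tfrac{1}{2d^{2\ee}}\cos(d^\ee x))$. The quadratic terms cancel in $\ln(\gamma_1/\pib)$, leaving
\begin{align*}
    \KL(\gamma_1 \mmid \pib)
    &= \E_{\xi \sim \gamma_1}\Bigl[\ln\frac{\zb}{\sqrt{2\uppi}} - \frac{\cos(d^\ee \xi)}{2d^{2\ee}}\Bigr]
    = \ln\frac{\zb}{\sqrt{2\uppi}} - \frac{1}{2d^{2\ee}}\,\E_{\xi\sim\gamma_1}[\cos(d^\ee\xi)]\,.
\end{align*}

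\textbf{Step 3: Bound the two terms.} By Lemma~\ref{lem:pibad}(a), $\zb = \sqrt{2\uppi} + O(d^{-4\ee})$, so $\ln(\zb/\sqrt{2\uppi}) = \ln(1 + O(d^{-4\ee})) = O(d^{-4\ee})$. For the second term, write $\cos(d^\ee\xi) = \sin(\uppi/2 + d^\ee\xi)$ and apply Lemma~\ref{lem:trig} (with $p \equiv 1$) to get $|\E_{\xi\sim\gamma_1}[\cos(d^\ee\xi)]| = O(1/d)$; hence this term is $O(d^{-1-2\ee}) = o(d^{-4\ee})$, since $\ee < 1/2$. Summing, $\KL(\gamma_1 \mmid \pib) = O(d^{-4\ee})$, and multiplying by $d$ (Step 1) yields $\KL(\gamma \mmid \pi) = O(d^{1-4\ee})$.

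\textbf{Main obstacle.} There is essentially no real obstacle beyond bookkeeping: the whole statement collapses once one observes the cancellation of the Gaussian part of the potential. The only genuinely needed input is the $O(d^{-4\ee})$ control (rather than merely $o(1)$) on the one-dimensional normalizing constant, which is exactly what Lemma~\ref{lem:pibad}(a) provides; the small care-point is checking that the residual exponent $1+2\ee$ dominates $4\ee$, which holds throughout the relevant range $\ee \in (0,1/4)$.
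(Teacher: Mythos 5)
Your proof is correct and follows essentially the same route as the paper: the quadratic parts of the potentials cancel, the normalizing-constant term is handled by Lemma~\ref{lem:pibad}(a), and the oscillatory term is shown to be negligible via Lemma~\ref{lem:trig}. The only cosmetic difference is that you tensorize to one dimension before computing, whereas the paper carries out the identical computation directly in $d$ dimensions.
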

\begin{proof}
    From the definition of the KL divergence, if $\xi_1,\dotsc,\xi_d$ are i.i.d.\ random variables drawn according to $\gamma$, then
    \begin{align*}
        \KL(\gamma \mmid \pi)
        &= \int \gamma(\bx) \ln\bigl(\frac{Z^d}{{(2\uppi)}^{d/2}} \, \exp \ft(\bx)\bigr) \, \D \bx
        = d \ln \frac{Z}{\sqrt{2\uppi}} - \frac{1}{2d^{2\eta}} \sum_{i=1}^d \E \cos(d^\eta \xi_i).
    \end{align*}
    From our estimate of the normalizing constant in Lemma~\ref{lem:pibad},
    \begin{align*}
        d\ln \frac{Z}{\sqrt{2\uppi}}
        &= d\ln\bigl(1 + O(d^{-4\ee})\bigr)
        = O(d^{1-4\ee}).
    \end{align*}
    On the other hand, from the proof of Lemma~\ref{lem:trig},
    \begin{align*}
        - \frac{1}{2d^{2\eta}} \sum_{i=1}^d \E \cos(d^\eta \xi_i)
        = o(d^{1-4\eta}).
    \end{align*}
    The result follows.
\end{proof}

\section{Calculations for a Gaussian target distribution}\label{scn:gaussian}

In this section, we provide calculations for MALA when the target distribution $\pi$ is the standard Gaussian. 
Since MALA applied to the Gaussian distribution has a scaling limit in the sense of \cite{roberts1998optimal}, one would expect the mixing time of the Gaussian distribution to be of order $d^{1/3}$, and that is indeed what we show below.

\subsection{Upper bound}
First, we show that, under a warm start, the mixing time of MALA applied to the standard Gaussian mixes at $O(d^{1/3})$ rate.
\begin{proposition}\label{prop:gaussian_upper}
Let $\varepsilon > 0$, and let the target distribution $\pi$ be the standard Gaussian on $\R^d$. For a step size $h = cd^{-1/3}$, where $c > 0$ is a small constant, and an initial distribution $\mu_0$ that is $M_0$-warm with respect to $\pi$ such that $\log \frac{M_0}{\varepsilon h} = O(d^{1/3})$, the mixing time of MALA satisfies
\begin{align*}
    \tau_{\rm mix}(\varepsilon, \mu_0; {\rm TV})
        &\lesssim d^{1/3} \log\Bigl(\frac{M_0}{\varepsilon}\Bigr) \,.
\end{align*}
\end{proposition}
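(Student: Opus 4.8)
The plan is to reuse, essentially verbatim, the conductance argument of Section~\ref{scn:conductance_argument} with $\alpha = \beta = \kappa = 1$; the only ingredient that has to be supplied afresh is a Gaussian-specific analogue of Proposition~\ref{prop:tv_concentration}, namely a pointwise bound $\norm{T_{\bx} - Q_{\bx}}_{\rm TV} \le 1/6$ valid on a set $E$ with $\pi(E^\comp) \le c_0 s\sqrt h$, where $s = \varepsilon/(2M_0)$. For the standard Gaussian I would obtain this by a direct computation of the acceptance ratio, since in the regime $h \asymp d^{-1/3}$ the discretized proposal is too far from the Ornstein--Uhlenbeck semigroup for the projection/Girsanov route to apply.

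The first step is to compute $\log a$ exactly. With $V(\bx) = \frac12\norm{\bx}^2$ the proposal is $\by = (1-h)\,\bx + \sqrt{2h}\,\xi$, $\xi \sim \Nor(0, I_d)$, and since $\by - \bx + h\nabla V(\bx) = \sqrt{2h}\,\xi$, expanding $\log a(\bx,\by) = V(\bx) - V(\by) + \frac{1}{4h}\bigl(\norm{\by - \bx + h\nabla V(\bx)}^2 - \norm{\bx - \by + h\nabla V(\by)}^2\bigr)$ and collecting terms gives
\[
    \log a(\bx,\by) = \frac{h^2(2-h)}{4}\,\norm{\bx}^2 - \frac{h^2}{2}\,\norm{\xi}^2 - \frac{(1-h)\,(2h)^{3/2}}{4}\,\langle \bx, \xi \rangle\,.
\]
The key point is the near-cancellation of the first two terms: their expectation under $\bx \sim \pi$ and $\xi \sim \Nor(0, I_d)$ (independent) equals $-\frac{h^3 d}{4} = -\frac{c^3}{4}$, and their fluctuations are $O(h^2 \sqrt d) = O(c^2 d^{-1/6}) = o(1)$; the cross term, conditionally on $\bx$, is centered Gaussian with standard deviation of order $h^{3/2}\norm{\bx} \asymp h^{3/2}\sqrt d = c^{3/2}$, i.e.\ exactly at the critical scale.

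Next I would convert this into the pointwise total variation bound. By Proposition~\ref{prop:basics}, $\norm{T_{\bx} - Q_{\bx}}_{\rm TV} = \E_{\by \sim Q_{\bx}}[(1 - a(\bx,\by))_+]$; since $(1 - a)_+ \le \epsilon$ whenever $\abs{\log a} \le \epsilon$ and $(1-a)_+ \le 1$ always, it suffices to control $\Pr_{\xi}(\abs{\log a(\bx,\xi)} > \epsilon)$ for a small absolute constant $\epsilon$. I would take $E := \{\abs{\norm{\bx}^2 - d} \le t\}$ with $t \asymp \sqrt{d \log(M_0/(\varepsilon h))}$; the hypothesis $\log(M_0/(\varepsilon h)) = O(d^{1/3})$ forces $t = O(d^{2/3})$, so $\chi^2$-concentration gives $\pi(E^\comp) \le c_0 s\sqrt h$ once the constant in $t$ is chosen large. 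On $E$ the deterministic part of $\log a$ is $O(h^2 t + h^3 d) = O(c^2)$; the $\norm{\xi}^2$-term deviates from its mean by more than $\epsilon$ with probability exponentially small in $d^{1/3}$ (sub-exponential concentration of $\chi^2_d$); and $\langle \bx, \xi \rangle \mid \bx$ is $\Nor(0, \norm{\bx}^2)$ with $\norm{\bx}^2 \le 2d$ on $E$, so the cross term exceeds $\epsilon$ with probability $\le 2\exp(-\Omega(\epsilon^2/c^3))$. Taking $\epsilon$ a small absolute constant and then $c$ small enough makes $\norm{T_{\bx} - Q_{\bx}}_{\rm TV} \le 1/6$ for every $\bx \in E$.

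With this in hand, the remainder is the conductance argument of Section~\ref{scn:conductance_argument} used as a black box: the decomposition~\eqref{eq:TV_decomposition}, the elementary Gaussian bound $\norm{Q_{\bx} - Q_{\by}}_{\rm TV} \le \norm{\bx - \by}/\sqrt{2h}$ (valid for $h \le 2$), the bad-set partition together with reversibility of $T$, and the isoperimetric inequality of Lemma~\ref{lem:properties_of_strong_log_concave} with $\alpha = 1$, which together yield $\msf C_s \gtrsim \sqrt h \asymp c^{1/2} d^{-1/6}$; Corollary~\ref{cor:mixing_from_s_conductance} then gives $\tau_{\rm mix}(\varepsilon, \mu_0; \mathrm{TV}) \lesssim \msf C_s^{-2}\log(M_0/\varepsilon) \lesssim h^{-1}\log(M_0/\varepsilon) \asymp d^{1/3}\log(M_0/\varepsilon)$. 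The main obstacle is the second step, i.e.\ reading off the cancellation from the explicit acceptance ratio and verifying that with $h \asymp d^{-1/3}$ every fluctuating contribution is $O(1)$ (hence tunable through the small constant $c$) rather than growing with $d$; in particular the cross term $h^{3/2}\langle\bx,\xi\rangle$ sits precisely at the borderline scale, which is the structural reason the exponent is $1/3$ and not smaller. Everything downstream is a routine reuse of the machinery already developed in this appendix.
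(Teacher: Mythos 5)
Your proposal is correct and follows essentially the same route as the paper: a direct computation of the Gaussian acceptance ratio (your expansion of $\log a$ is exactly the paper's $\frac h4(\norm{\bx}^2-\norm{\by}^2)$ written out in terms of $\norm{\bx}^2$, $\norm{\xi}^2$, $\langle\bx,\xi\rangle$), yielding a high-probability pointwise bound $\norm{T_{\bx}-Q_{\bx}}_{\rm TV}\le 1/6$ on a set of $\pi$-measure $1-O(s\sqrt h)$, followed by the $s$-conductance machinery of Proposition~\ref{prop:s_cond} and Corollary~\ref{cor:mixing_from_s_conductance}. The only cosmetic differences are that the paper phrases the pointwise bound as $A(\bx)\ge 5/6$ via Proposition~\ref{prop:basics} and conditions on $|\norm{\bx}-\sqrt d|\le d^{1/6}$ rather than your $|\norm{\bx}^2-d|\le t$, which are equivalent.
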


Using the results of Appendix~\ref{scn:other_distances}, the mixing time bounds can then be extended to the KL divergence, the chi-squared divergence, and the $2$-Wasserstein distance.

The proof crucially relies on the fact that when $h\approx d^{-1/3}$, the acceptance probability $A(\bx)$ (see~\eqref{eq:metro_adjust})
when $\bx \sim \pi$ is of order $\Omega(1)$ with high probability, which is formalized below.
\begin{lemma}\label{lem:gaussian_acc_prob}
    Let $\pi$ be the standard Gaussian. For $h = c_0 d^{-1/3}$, where $c_0 >0$ is sufficiently small, and $\bx \sim \pi$, there exists $c_1 > 0$ such that with probability at least $1 - 2\exp(-c_1 d^{1/3})$, it holds that $A(\bx) \ge 5/6$.
\end{lemma}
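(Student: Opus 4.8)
The plan is to exploit the fact that for a Gaussian target the Metropolis ratio has an exact, exceptionally simple form. Since $V(\bx) = \norm{\bx}^2/2$, one step of the ULA proposal is $\by = (1-h)\,\bx + \sqrt{2h}\,\bs\xi$ with $\bs\xi \sim \Nor(\bs 0, I_d)$, and a direct expansion of $a(\bx,\by) = \pi(\by) Q(\by,\bx)/(\pi(\bx) Q(\bx,\by))$ shows that every cross term $\langle \bx, \by\rangle$ cancels, leaving the identity
\[
    a(\bx,\by) = \exp\bigl[-\tfrac{h}{4}\,(\norm{\by}^2 - \norm{\bx}^2)\bigr].
\]
Writing $W := \norm{\by}^2 - \norm{\bx}^2$, we therefore get $A(\bx) = \E_{\bs\xi}[\,1 \wedge \exp(-\tfrac h4 W)\,]$. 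Using the elementary bound $1 \wedge e^{-s} \ge 1 - \max\{s,0\} \ge 1 - \abs s$, this gives $A(\bx) \ge 1 - \tfrac h4 \, \E_{\bs\xi}\abs W$, so it suffices to produce a set of $\bx$'s of probability at least $1 - 2\exp(-c_1 d^{1/3})$ on which $\E_{\bs\xi}\abs W \le 2/(3h)$.

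The next step is to compute the first two conditional moments of $W$. Expanding $W = ((1-h)^2 - 1)\,\norm{\bx}^2 + 2h\,\norm{\bs\xi}^2 + 2(1-h)\sqrt{2h}\,\langle \bx, \bs\xi\rangle$ and using that $\norm{\bs\xi}^2$ and $\langle\bx,\bs\xi\rangle$ are uncorrelated, one obtains
\[
    \E_{\bs\xi} W = h^2 d - (2h - h^2)\,(\norm{\bx}^2 - d), \qquad \operatorname{Var}_{\bs\xi}(W) = 8h^2 d + 8h\,(1-h)^2\,\norm{\bx}^2,
\]
and hence $\E_{\bs\xi}\abs W \le \abs{\E_{\bs\xi} W} + \sqrt{\operatorname{Var}_{\bs\xi}(W)}$. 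The key structural point is that $\E_{\bs\xi} W$ consists of the deterministic piece $h^2 d$ plus a fluctuation governed by $\norm{\bx}^2 - d$.

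I would then restrict to the good event $\msf G := \{\,\abs{\norm{\bx}^2 - d} \le d^{2/3}\,\}$. A Bernstein-type tail bound for the $\chi^2_d$ law gives $\Pr(\msf G^\comp) \le 2\exp(-c_1 d^{1/3})$ for an absolute constant $c_1 > 0$ (once $d$ is large, so that $\norm\bx^2 \le 2d$ on $\msf G$). On $\msf G$ we have $\abs{\E_{\bs\xi} W} \le h^2 d + 2h\,d^{2/3}$ and $\operatorname{Var}_{\bs\xi}(W) \le 8h^2 d + 16 h d$; substituting $h = c_0 d^{-1/3}$ and $c_0 \le 1$ yields $\E_{\bs\xi}\abs W \le (c_0^2 + 2c_0 + \sqrt{24 c_0})\, d^{1/3}$, so that $\tfrac h4 \, \E_{\bs\xi}\abs W \le \tfrac{c_0}{4}\,(c_0^2 + 2c_0 + \sqrt{24 c_0})$. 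Since this tends to $0$ as $c_0 \downarrow 0$, it is at most $1/6$ once $c_0$ is below a small absolute threshold, giving $A(\bx) \ge 5/6$ for every $\bx \in \msf G$.

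\textbf{Main obstacle.} The only genuinely substantive point is the exact cancellation yielding $a(\bx,\by) = \exp[-\tfrac h4(\norm\by^2 - \norm\bx^2)]$ — this is the ``fine cancellation'' special to the Gaussian that is unavailable for general potentials, and without it one would have to control the acceptance probability by brute force. Once it is in hand, the estimate closes precisely because $h \asymp d^{-1/3}$: the deviation $\abs{\norm\bx^2 - d} \lesssim d^{2/3}$ that is needed to obtain the $\exp(-c_1 d^{1/3})$ tail contributes a term of order $h^2 \cdot d^{2/3} = c_0^2$ to $\tfrac h4\E_{\bs\xi}\abs W$, which stays bounded, whereas a larger step size would make this term grow with $d$. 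A minor amount of care is needed to keep all implied constants absolute and to state the conclusion for $d$ sufficiently large.
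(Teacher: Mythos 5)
Your proof is correct, and it rests on the same key fact as the paper's: the exact Gaussian cancellation $a(\bx,\by) = \exp[\tfrac h4(\norm{\bx}^2-\norm{\by}^2)]$, combined with concentration of $\norm{\bx}^2$ around $d$ at scale $d^{2/3}$ to get the $\exp(-c_1 d^{1/3})$ tail. Where you diverge is in how the expectation over the proposal noise is handled. The paper reduces by radial symmetry to $\bx=(x_1,0,\dots,0)$, splits $\bxi=(\xi_1,\bxi_{-1})$, conditions on high-probability events for $\xi_1$ and $\norm{\bxi_{-1}}$ on which $a(\bx,\by)\ge 11/12$ pointwise, and concludes $A(\bx)\ge \tfrac{11}{12}\cdot\tfrac{10}{11}=\tfrac56$. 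You instead use the elementary inequality $1\wedge e^{-s}\ge 1-\abs{s}$ to get $A(\bx)\ge 1-\tfrac h4\,\E_{\bs\xi}\abs W$ and then bound $\E_{\bs\xi}\abs W$ by $\abs{\E_{\bs\xi}W}+\sqrt{\operatorname{Var}_{\bs\xi}(W)}$, both of which you compute exactly (and correctly: the cross term $\langle\bx,\bxi\rangle$ is indeed uncorrelated with $\norm{\bxi}^2$). Your route avoids the symmetry reduction and the event bookkeeping entirely, and makes the dependence on $c_0$ fully explicit, which is arguably cleaner; the paper's route gives slightly more information (a pointwise lower bound on $a(\bx,\by)$ itself on most of the proposal's support) and generalizes more readily when one cannot compute variances in closed form. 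The only caveats in your write-up are minor and you flag them yourself: the claim holds for $d$ large (small $d$ is vacuous after shrinking $c_1$), and the constants must be kept absolute — both are consistent with how the paper treats the same issues.
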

\begin{proof}[Proof of Proposition~\ref{prop:gaussian_upper}]
    We sketch the proof, following the $s$-conductance mixing time strategy outlined in Appendix~\ref{scn:upper_bd_overview}. Let $E := \{\bx \in\R^d \mid A(\bx) \ge 5/6\}$. Lemma~\ref{lem:gaussian_acc_prob} guarantees that $\pi(E) \ge 1 - 2\exp( - c_1 d^{1/3})$. By our assumption, we have $\log(\varepsilon h / M_0) = \Omega(d^{-1/3})$, so $\pi(E) \ge 1 - c' \sqrt{h} s$ for some constant $c'>0$, where $s := \varepsilon/(2M_0)$. Moreover, on the event $E$ we have (by Proposition~\ref{prop:basics}) that
    \begin{align*}
        \norm{T_{\bx} - Q_{\bx}}_{TV} = 1 - A(\bx) \le \frac 1 6 \,.
    \end{align*}
    Then the argument in the proof of Proposition~\ref{prop:s_cond} implies that the $s$-conductance, defined in \eqref{eq:C_s}, is lower bounded by $\msf C_s \gtrsim \sqrt h$, and Corollary~\ref{cor:mixing_from_s_conductance} gives the desired mixing time bound.
\end{proof}
\begin{proof}[Proof of Lemma~\ref{lem:gaussian_acc_prob}]
    Let $\bx \sim \pi$ and $\by \sim Q(\bx, \cdot)$.
    We will use $c$ to denote universal constants, which can change from line to line. First note that by concentration of the norm~\citep[Theorem 3.1.1]{vershynin2018highdimprob}, we have that for all $t > 0$, 
    \begin{align*}
        \Pr\bigl(\bigl\lvert\,\norm{\bx} - \sqrt d\,\bigr\rvert > t\bigr) \le 2 \exp(-ct^2)\,.
    \end{align*}
    As a result, the event 
    \begin{align*}
        E_1 := \bigl\{\bigl\lvert\,\norm{\bx} - \sqrt d\,\bigr\rvert \le t_1\bigr\}
    \end{align*}
    holds with probability at least $1- 2\exp( - ct_1^2)$.
    
    By the radial symmetry of the standard Gaussian, we can assume that the only non-zero coordinate of $\bx$ is the first coordinate: $\bx = (x_1, 0,\dotsc,0)$. Given $\bx$, we draw $\by$ by:
    \begin{align*}
        \by &= (1-h)\bx + \sqrt{2 h} \,\bxi, \qquad\ \bxi\sim \mc N(0, I_d)\,.
    \end{align*}
    We can write $\bxi = (\xi_1, \bxi_{-1})$, where $\xi_1 \sim \mc N(0, 1)$, and $\bxi_{-1}\sim \mc N(0, I_{d-1})$. By Gaussian concentration, the event
    \begin{align*}
        E_2 := \{|\xi_1| \le t_2\}
    \end{align*}
    holds with probability at least $1-2\exp(-ct_2^2)$, and the event
    \begin{align*}
        E_3 := \bigl\{\bigl\lvert\,\norm{\bxi_{-1}} - \sqrt d \,\bigr\rvert \le t_3\}
    \end{align*}
    hold with probability at least $1 - 2\exp(- ct_3^2)$. Define the quantities
    \begin{align*}
        \epsilon_1
        &:= \norm{\bx} - \sqrt d\,, \qquad \epsilon_2 := \xi_1\,, \qquad \epsilon_3 := \norm{\bxi_{-1}} - \sqrt d \,.
    \end{align*}
    Note that when $\pi$ is the standard Gaussian, a brief calculation using the definition~\eqref{eq:accept_prob} shows that $a(\bx,\by) = \exp(\frac{h}{4} \, (\norm{\bx}^2 - \norm{\by}^2))$. Then, on the event $E_1\cap E_2 \cap E_3$, we have that
    \begin{align*}
        \frac h 4\, \bigl\lvert\,\norm{\bx}^2 - \norm{\by}^2\,\bigr\rvert
        &= \frac h 4\, \lvert x_1^2 - [(1-h)x_1 + \sqrt{2h}\,\xi_1]^2 - 2h\, \norm{\bxi_{-1}}^2\rvert\\
        &= \frac h 4 \,\lvert (\sqrt d + \epsilon_1)^2 - [(1-h)\,(\sqrt d + \epsilon_1) + \sqrt{2h}\, \epsilon_2]^2 - 2h\, (\sqrt d + \epsilon_3)^2\rvert\\
        &= O(dh^3 + d^{1/2} h^2 t_1 + h^{3/2} d^{1/2} t_2 + d^{1/2} h^2 t_3)\,,
    \end{align*}
    assuming that $t_1 = O(d^{1/2})$.
    In fact, we take $t_1, t_3 = d^{1/6}$.
    If we take $t_2$ to be a sufficiently large constant (and the dimension $d$ is large), then we can ensure that the event $E_2 \cap E_3$ holds with probability at least $10/11$. With these choices,
    \begin{align*}
        \frac h 4\, \bigl\lvert\,\norm{\bx}^2 - \norm{\by}^2\,\bigr\rvert
        &= O(dh^3 + d^{2/3} h^2 + d^{1/2} h^{3/2})\,.
    \end{align*}
    Taking $h \le c/d^{1/3}$ for a sufficiently small constant $c > 0$, we can ensure that $a(\bx,\by) \ge 11/12$. Thus, on the event $E_1$, we have
    \begin{align*}
        A(\bx)
        &= \E[A(\bx, \by) \mid \bx]
        \ge \E[A(\bx,\by) \one_{E_2 \cap E_2} \mid \bx]
        \ge \frac{11}{12} \cdot \frac{10}{11}
        = \frac{5}{6}\,.
    \end{align*}
    This completes the proof.
\end{proof}

\subsection{Lower bound}

We show that when the step size is chosen as $h \gg d^{-1/3}$, then the conductance of the MALA chain with Gaussian target is exponentially small.

\begin{proposition}
    For every $\rr < 1/3$, if we take step size $h = d^{-\rr}$, then the conductance of the MALA chain is exponentially small:
    \begin{align*}
        \exists \delta > 0 \qquad\text{such that}\qquad \msf C
        &\lesssim \exp[-\Omega(d^\delta)]\,.
    \end{align*}
\end{proposition}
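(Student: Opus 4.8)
The plan is to combine the closed-form acceptance ratio for a Gaussian target with the conductance upper bound of Proposition~\ref{prop:upper_conductance}. First I would recall from the proof of Lemma~\ref{lem:gaussian_acc_prob} that for $\pi = \Nor(\bs 0, I_d)$ one has $a(\bx,\by) = \exp(\frac h4\,(\norm{\bx}^2 - \norm{\by}^2))$. Writing a proposal as $\by = (1-h)\,\bx + \sqrt{2h}\,\bxi$ with $\bxi \sim \Nor(\bs 0, I_d)$ and expanding $\norm{\by}^2$, a short computation gives
\begin{align*}
    \ln a(\bx,\by) = -\frac{h^3 d}{4} + \frac{h^2(2-h)}{4}\,(\norm{\bx}^2 - d) - \frac{h^2}{2}\,(\norm{\bxi}^2 - d) - \frac{h(1-h)\sqrt{2h}}{2}\,\langle \bx, \bxi\rangle\,.
\end{align*}
The crucial point is that the two $\Theta(h^2 d)$ contributions coming from $\norm{\bx}^2$ and $\norm{\bxi}^2$ cancel exactly, leaving the deterministic negative drift $-h^3 d/4 = -\tfrac14\,d^{1-3\rr}$, which diverges precisely because $\rr < 1/3$. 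The remaining three terms are fluctuations that I will argue are of strictly smaller order with overwhelming probability.

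Concretely, set $E := \{\bx \in \R^d : \abs{\norm{\bx}^2 - d} \le \sqrt d \,\log d\}$; Gaussian norm concentration gives $\pi(E) \ge 1/2$ for $d$ large. For $\bx \in E$ the second term is $O(h^2 \sqrt d \log d) = o(h^3 d)$, since $1/2 - 2\rr < 1 - 3\rr$. For the third term, $\chi^2$-concentration of $\norm{\bxi}^2$ gives $\abs{\norm{\bxi}^2 - d} \le \tfrac1{10} h d$ outside an event of probability $\exp(-\Omega(h^2 d)) = \exp(-\Omega(d^{1-2\rr}))$, and on the complement $h^2\abs{\norm{\bxi}^2 - d} \lesssim h^3 d$. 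For the cross term, conditionally on $\bx$ we have $\langle \bx, \bxi\rangle = \norm{\bx}\,\zeta$ with $\zeta \sim \Nor(0,1)$, and $\Pr(\abs\zeta > d^{1/2 - 3\rr/2}) \le 2\exp(-\tfrac12 d^{1-3\rr})$, with $h^{3/2}\norm{\bx}\,\abs\zeta \lesssim h^3 d$ on the complement; note that the exponent $1/2 - 3\rr/2$ is positive exactly when $\rr < 1/3$. Choosing the implicit constants so that the sum of the three fluctuation terms is at most $\tfrac18 d^{1-3\rr}$, I conclude that for every $\bx \in E$, with probability at least $1 - \exp(-\Omega(d^{1-3\rr}))$ over $\by \sim Q(\bx,\cdot)$,
\begin{align*}
    \ln a(\bx,\by) \le -\frac{d^{1-3\rr}}{8}\,.
\end{align*}

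Finally, since $A(\bx,\by) = 1 \wedge a(\bx,\by) \le a(\bx,\by)$ on this good event and $A(\bx,\by) \le 1$ always, integrating over $\by$ yields $A(\bx) = \int Q(\bx,\by)A(\bx,\by)\,\D \by \le \exp(-d^{1-3\rr}/8) + \exp(-\Omega(d^{1-3\rr})) = \exp(-\Omega(d^{1-3\rr}))$, uniformly over $\bx \in E$. Proposition~\ref{prop:upper_conductance} then gives $\msf C \le 2 \sup_{\bx \in E} A(\bx) \le \exp(-\Omega(d^{1-3\rr}))$, so the claim holds with $\delta = 1 - 3\rr > 0$. The only real content beyond bookkeeping is the exact cancellation in the first display (inherited from the closed-form Gaussian acceptance ratio) and the verification that the cross term $\langle \bx, \bxi\rangle$ — the most delicate of the fluctuations — sits at scale $o(h^3 d)$ with the stated high probability exactly when $\rr < 1/3$; this is the step that pins down the threshold and is the one to handle carefully.
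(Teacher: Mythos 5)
Your argument is correct, and it takes a genuinely different route from the paper's. The paper bounds $A(\bx,\by) \le \sqrt{a(\bx,\by)}$ and evaluates $\int Q(\bx,\by)\,{a(\bx,\by)}^{1/2}\,\D\by$ in closed form as a Gaussian integral, obtaining $\exp(-\tfrac{h^3 d}{16}\,(1+O(h)))$ on the set $\{\norm{\bx}\le\sqrt d\}$; the square root is essential there because, as the paper's footnote warns, integrating the raw bound $A \le a$ over all of $\R^d$ is dominated by the upper tail of $a$ and only yields $\exp(-\Theta(h^4 d))$, i.e.\ a threshold of $\rr<1/4$ rather than $1/3$. You sidestep that obstruction differently: by expanding $\ln a(\bx,\by)$ to expose the exact cancellation of the $\Theta(h^2 d)$ contributions and the deterministic drift $-h^3 d/4$, then using $A \le a$ only on a high-probability event where the fluctuations are controlled, and the trivial bound $A \le 1$ on its complement. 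Your bookkeeping checks out: the display for $\ln a$ is algebraically correct, the failure probabilities $\exp(-\Omega(d^{1-2\rr}))$ and $\exp(-\Omega(d^{1-3\rr}))$ are both $\exp(-\Omega(d^{1-3\rr}))$ since $\rr<1/3$, and the constants can indeed be tuned so the fluctuations total at most $\tfrac18 h^3 d$. What each approach buys: the paper's is shorter and gives a clean explicit constant via exact integration, whereas yours makes transparent \emph{why} $1/3$ is the threshold --- the cross term $\langle\bx,\bxi\rangle$ fluctuates at scale $h^{3/2}\sqrt d$, which is dominated by the drift $h^3 d$ precisely when $h \gg d^{-1/3}$ --- and, since it relies only on concentration rather than closed-form Gaussian integrals, it is closer in spirit to what is needed for non-Gaussian targets such as the perturbed potential of Theorem~\ref{thm:mala_lower_bd_main}.
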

\begin{proof}
\renewcommand{\thefootnote}{\fnsymbol{footnote}}
We want to upper bound the conductance, defined in \eqref{eq:C}. It suffices to show that there exists an event $\eve \subseteq \R^d$ with $\pi(\eve) \ge 1/2$ such that
\begin{align*}
    \sup_{\bx \in \eve} \int Q(\bx,\by) A(\bx,\by) \, \D \by = \exp[-\Omega(d^\delta)]\,,
\end{align*}
see Proposition~\ref{prop:upper_conductance}.
Specifically, we will take $E := \{\bx\in\R^d \mid \norm{\bx} \leq \sqrt d\}$; note that
\begin{align*}
    \pi(E) = \frac{\Gamma(\frac d 2, 0) - \Gamma(\frac d 2, \frac d 2)}{\Gamma(\frac d 2)} > \frac 1 2\,.
\end{align*}
From the definition~\eqref{eq:accept_prob}, we have $A(\bx, \by) = a(\bx, \by)\wedge 1 \leq \sqrt{a(\bx, \by)}$.\footnote{One can check that the simple bound $A(\bx,\by) \le a(\bx,\by)$ is not enough for the proof to go through. A similar argument to upper bound the acceptance probability is made in \cite{hairer2014spectral}.} Since $V(\bx) = \frac 1 2 \norm{\bx}^2$, a little algebra using the definition~\eqref{eq:accept_prob} shows that
\begin{align*}
    a(\bx, \by) = \exp\bigl(\frac h 4 \, (\norm{\bx}^2 - \norm{\by}^2)\bigr)\,.
\end{align*}
Further calculations show that
\begin{align*}
    \int_{\R^d} Q(\bx, \by) A(\bx, \by)\, \D \by
    &\leq \int_{\R^d} Q(\bx, \by) {a(\bx, \by)}^{1/2} \, \D \by\\
    &= \int_{\R^d} \frac{1}{{(4\uppi h)}^{d/2}} \exp\bigl(- \frac{1}{4h}\,\norm{\by - (1-h)\bx}^2\bigr) \exp\bigl(\frac h 2 \, (\norm{\bx}^2 - \norm{\by}^2)\bigr) \, \D \by \\
    &= \frac{1}{{(4\uppi h)}^{d/2}} \int_{\R^d} \exp\Bigl( - \frac{1+h^2/2}{4h} \, \bigl\lVert \by - \frac{1-h}{1 + h^2/2}\, \bx\bigr\rVert^2 \Bigr) \, \D \by \\
    & \ \ \ \ \ \ \ \ \ \ \ \ \qquad\qquad\qquad\qquad\qquad\qquad\qquad \times{} \exp\bigl(\frac{h^2 \, (1 - h/4)}{1 + h^2/2} \, \norm{\bx}^2\bigr) \\
    &= \exp\Bigl(\frac{h^2 \, (1 - h/4)}{4 \, (1 + h^2/2)} \, \norm{\bx}^2 - \frac d 2 \ln\bigl(1 + \frac{h^2}{2}\bigr)\Bigr).
\end{align*}
For $\bx \in \eve$, we can bound this via
\begin{align*}
    \int_{\R^d} Q(\bx, \by) A(\bx, \by)\, \D \by
    &\le \exp\Bigl(\frac{h^2 \, (1 - h/4) d}{4 \, (1 + h^2/2)} - \frac d 2 \ln\bigl(1 + \frac{h^2}{2}\bigr)\Bigr)
    = \exp\Bigl( - \frac{h^3 d}{16} \, \bigl(1 + O(h)\bigr)\Bigr)
\end{align*}
which completes the proof.
\end{proof}

The next result shows that the spectral gap of the MALA chain is always upper bounded by the step size. Together with the preceding result, it implies that the mixing time of the MALA chain with Gaussian target is no better than $O(d^{1/3})$.

\begin{proposition}
    The spectral gap of MALA with Gaussian target distribution and step size $h$ satisfies
    \begin{align*}
        \lambda \lesssim h \,.
    \end{align*}
\end{proposition}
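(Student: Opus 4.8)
The plan is to apply the variational characterization of the spectral gap~\eqref{eq:lambda} with a single linear test function, exactly along the lines of the proof of Theorem~\ref{thm:upper_bd_gap_separable}. Indeed, the standard Gaussian has the separable potential $V(\bx) = \frac{1}{2}\sum_{i=1}^d x_i^2$, which is symmetric about the origin, satisfies $V(\bs 0) = \min V$, is $1$-smooth, and whose marginal $\pi_1 = \Nor(0,1)$ has $\E_{x\sim\pi_1}[x^2] = 1$; hence the proposition is an immediate consequence of Theorem~\ref{thm:upper_bd_gap_separable}. For completeness I would also spell out the (short) direct argument.

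First I would take the test function $f(\bx) := x_1$. By radial symmetry of $\pi$ we have $\E_\pi f = 0$, so $\var_\pi f = \E_\pi[x_1^2] = 1$. Using the reversibility of the MALA kernel $T$, the Dirichlet form may be rewritten as $\mc E(f,f) = \frac{1}{2}\,\E_{\bx\sim\pi,\ \by\sim T(\bx,\cdot)}\bigl[(x_1 - y_1)^2\bigr]$, so it suffices to bound the one-step mean-squared displacement of the first coordinate under the MALA chain started from stationarity.

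Next I would split a MALA step into its proposal and accept/reject stages: either the proposal is rejected, in which case $y_1 = x_1$ and the contribution vanishes, or it is accepted, in which case $\by$ has law $Q(\bx,\cdot)$. Since the indicator of acceptance is at most $1$, this gives $\E_{\by\sim T(\bx,\cdot)}[(x_1-y_1)^2] \le \E_{\by\sim Q(\bx,\cdot)}[(x_1-y_1)^2]$, which sidesteps any analysis of the acceptance probability. Under the Langevin proposal, $y_1 = (1-h)x_1 + \sqrt{2h}\,\xi$ with $\xi\sim\Nor(0,1)$ independent of $\bx$, so that $x_1 - y_1 = h x_1 - \sqrt{2h}\,\xi$ and hence $(x_1-y_1)^2 \le 2h^2 x_1^2 + 4h\,\xi^2$. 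Taking expectations over $\bx\sim\pi$ and $\xi$ yields $\mc E(f,f) \le h^2\,\E_\pi[x_1^2] + 2h\,\E[\xi^2] = h^2 + 2h \lesssim h$ whenever $h \le 1$; dividing by $\var_\pi f = 1$ gives $\lambda \lesssim h$.

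The argument is essentially mechanical, so there is no real obstacle; the only step requiring a moment's thought is the passage from the MALA kernel $T$ to the Gaussian proposal $Q$ via the observation that the Metropolis rejection step can only shrink the displacement. This is precisely the feature that lets us avoid computing (or even estimating) the acceptance probability, which is the hard part in the complementary conductance lower bound.
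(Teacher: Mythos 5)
Your proposal is correct and matches the paper's approach exactly: the paper proves this proposition by observing it is a special case of Theorem~\ref{thm:upper_bd_gap_separable}, whose proof uses the same test function $f(\bx) = x_1$, the same passage from $T$ to $Q$ by bounding the acceptance indicator by $1$, and the same elementary moment computation under the Langevin proposal. Nothing is missing.
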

\begin{proof}
    This is a special case of Theorem~\ref{thm:upper_bd_gap_separable}.
\end{proof}

\bibliography{ref.bib}

\begin{thebibliography}{41}
\providecommand{\natexlab}[1]{#1}
\providecommand{\url}[1]{\texttt{#1}}
\expandafter\ifx\csname urlstyle\endcsname\relax
  \providecommand{\doi}[1]{doi: #1}\else
  \providecommand{\doi}{doi: \begingroup \urlstyle{rm}\Url}\fi

\bibitem[Bakry et~al.(2014)Bakry, Gentil, and Ledoux]{bakrygentilledoux2014}
Dominique Bakry, Ivan Gentil, and Michel Ledoux.
\newblock \emph{Analysis and geometry of {M}arkov diffusion operators}, volume
  348 of \emph{Grundlehren der Mathematischen Wissenschaften [Fundamental
  Principles of Mathematical Sciences]}.
\newblock Springer, Cham, 2014.

\bibitem[Billera and Diaconis(2001)]{billeradiaconis2001mhprojection}
Louis~J. Billera and Persi Diaconis.
\newblock A geometric interpretation of the {M}etropolis-{H}astings algorithm.
\newblock \emph{Statist. Sci.}, 16\penalty0 (4):\penalty0 335--339, 2001.

\bibitem[Bobkov and Houdr\'{e}(1997)]{bobkovhoudre1997sobolevisoperimetry}
Serguei~G. Bobkov and Christian Houdr\'{e}.
\newblock Some connections between isoperimetric and {S}obolev-type
  inequalities.
\newblock \emph{Mem. Amer. Math. Soc.}, 129\penalty0 (616):\penalty0 viii+111,
  1997.

\bibitem[Bubeck(2015)]{bubeck2015convex}
Sébastien Bubeck.
\newblock Convex optimization: algorithms and complexity.
\newblock \emph{Foundations and Trends® in Machine Learning}, 8\penalty0
  (3-4):\penalty0 231--357, 2015.

\bibitem[Burkholder(1973)]{burkholderDistributionFunctionInequalities1973}
Donald~L. Burkholder.
\newblock Distribution function inequalities for martingales.
\newblock \emph{Ann. Probability}, 1:\penalty0 19--42, 1973.

\bibitem[Caffarelli(2000)]{caffarelli2000contraction}
Luis~A. Caffarelli.
\newblock Monotonicity properties of optimal transportation and the {FKG} and
  related inequalities.
\newblock \emph{Comm. Math. Phys.}, 214\penalty0 (3):\penalty0 547--563, 2000.

\bibitem[Chen et~al.(2020)Chen, Dwivedi, Wainwright, and Yu]{chenetal2020hmc}
Yuansi Chen, Raaz Dwivedi, Martin~J. Wainwright, and Bin Yu.
\newblock Fast mixing of {M}etropolized {H}amiltonian {M}onte {C}arlo: benefits
  of multi-step gradients.
\newblock \emph{J. Mach. Learn. Res.}, 21:\penalty0 Paper No. 92, 71, 2020.

\bibitem[Cheng et~al.(2018)Cheng, Chatterji, Bartlett, and
  Jordan]{chengetal2018underdamped}
Xiang Cheng, Niladri~S. Chatterji, Peter~L. Bartlett, and Michael~I. Jordan.
\newblock Underdamped {L}angevin {MCMC}: A non-asymptotic analysis.
\newblock In Sébastien Bubeck, Vianney Perchet, and Philippe Rigollet,
  editors, \emph{Proceedings of the 31st Conference On Learning Theory},
  volume~75 of \emph{Proceedings of Machine Learning Research}, pages 300--323.
  PMLR, 06--09 Jul 2018.

\bibitem[Dalalyan(2017)]{dalalyan2014theoretical}
Arnak~S. Dalalyan.
\newblock Theoretical guarantees for approximate sampling from smooth and
  log-concave densities.
\newblock \emph{J. R. Stat. Soc. Ser. B. Stat. Methodol.}, 79\penalty0
  (3):\penalty0 651--676, 2017.

\bibitem[Dalalyan and Tsybakov(2012)]{dalalyan2012sparse}
Arnak~S. Dalalyan and Alexandre~B. Tsybakov.
\newblock Sparse regression learning by aggregation and {L}angevin
  {M}onte-{C}arlo.
\newblock \emph{Journal of Computer and System Sciences}, 78\penalty0
  (5):\penalty0 1423--1443, 2012.

\bibitem[{Dalalyan} et~al.(2019){Dalalyan}, {Karagulyan}, and
  {Riou-Durand}]{dalalyankaragulyanrioudurand2019nonstronglylogconcave}
Arnak~S. {Dalalyan}, Avetik {Karagulyan}, and Lionel {Riou-Durand}.
\newblock {Bounding the error of discretized {L}angevin algorithms for
  non-strongly log-concave targets}.
\newblock \emph{arXiv e-prints}, art. arXiv:1906.08530, June 2019.

\bibitem[Davis(1976)]{davis1976lpnorm}
Burgess Davis.
\newblock On the {$L^{p}$} norms of stochastic integrals and other martingales.
\newblock \emph{Duke Math. J.}, 43\penalty0 (4):\penalty0 697--704, 1976.

\bibitem[Dwivedi et~al.(2019)Dwivedi, Chen, Wainwright, and Yu]{dwivedi2019log}
Raaz Dwivedi, Yuansi Chen, Martin~J. Wainwright, and Bin Yu.
\newblock Log-concave sampling: {M}etropolis-{H}astings algorithms are fast.
\newblock \emph{Journal of Machine Learning Research}, 20\penalty0
  (183):\penalty0 1--42, 2019.

\bibitem[Fathi et~al.(2020)Fathi, Gozlan, and
  Prod'homme]{fathigozlanprodhomme2020caffarelli}
Max Fathi, Nathael Gozlan, and Maxime Prod'homme.
\newblock A proof of the {C}affarelli contraction theorem via entropic
  regularization.
\newblock \emph{Calc. Var. Partial Differential Equations}, 59\penalty0
  (3):\penalty0 Paper No. 96, 18, 2020.

\bibitem[{Ganesh} and {Talwar}(2020)]{ganeshtalwar2020renyi}
Arun {Ganesh} and Kunal {Talwar}.
\newblock {Faster differentially private samplers via {R}{\'e}nyi divergence
  analysis of discretized {L}angevin {MCMC}}.
\newblock \emph{arXiv e-prints}, art. arXiv:2010.14658, October 2020.

\bibitem[Hairer et~al.(2014)Hairer, Stuart, and Vollmer]{hairer2014spectral}
Martin Hairer, Andrew~M. Stuart, and Sebastian~J. Vollmer.
\newblock Spectral gaps for a {M}etropolis--{H}astings algorithm in infinite
  dimensions.
\newblock \emph{The Annals of Applied Probability}, 24\penalty0 (6):\penalty0
  2455--2490, 2014.

\bibitem[Hastings(1970)]{hastings1970monte}
Wilfred~K. Hastings.
\newblock Monte {C}arlo sampling methods using {M}arkov chains and their
  applications.
\newblock \emph{Biometrika}, 57\penalty0 (1):\penalty0 97--109, 1970.

\bibitem[Karatzas and Shreve(1998)]{karatzas1998brownian}
Ioannis Karatzas and Steven~E. Shreve.
\newblock Brownian motion.
\newblock In \emph{Brownian Motion and Stochastic Calculus}, pages 47--127.
  Springer, 1998.

\bibitem[Lawler and Sokal(1988)]{lawlersokal1998cheeger}
Gregory~F. Lawler and Alan~D. Sokal.
\newblock Bounds on the {$L^2$} spectrum for {M}arkov chains and {M}arkov
  processes: a generalization of {C}heeger's inequality.
\newblock \emph{Trans. Amer. Math. Soc.}, 309\penalty0 (2):\penalty0 557--580,
  1988.

\bibitem[Le~Gall(2016)]{legall2016stochasticcalc}
Jean-Fran\c{c}ois Le~Gall.
\newblock \emph{Brownian motion, martingales, and stochastic calculus}, volume
  274 of \emph{Graduate Texts in Mathematics}.
\newblock Springer, [Cham], {F}rench edition, 2016.

\bibitem[Lee and Vempala(2018)]{lee2018convergence}
Yin~Tat Lee and Santosh~S. Vempala.
\newblock Convergence rate of {R}iemannian {H}amiltonian {M}onte {C}arlo and
  faster polytope volume computation.
\newblock In \emph{Proceedings of the 50th Annual ACM SIGACT Symposium on
  Theory of Computing}, pages 1115--1121, 2018.

\bibitem[Liu(2008)]{liu2008monte}
Jun~S. Liu.
\newblock \emph{Monte Carlo strategies in scientific computing}.
\newblock Springer Science \& Business Media, 2008.

\bibitem[Lov{\'a}sz and Simonovits(1993)]{lovasz1993random}
L{\'a}szl{\'o} Lov{\'a}sz and Mikl{\'o}s Simonovits.
\newblock Random walks in a convex body and an improved volume algorithm.
\newblock \emph{Random Structures \& Algorithms}, 4\penalty0 (4):\penalty0
  359--412, 1993.

\bibitem[Mangoubi and Vishnoi(2018)]{mangoubivishnoi2018hmc}
Oren Mangoubi and Nisheeth~K. Vishnoi.
\newblock Dimensionally tight bounds for second-order {H}amiltonian {M}onte
  {C}arlo.
\newblock In S.~Bengio, H.~Wallach, H.~Larochelle, K.~Grauman, N.~Cesa-Bianchi,
  and R.~Garnett, editors, \emph{Advances in Neural Information Processing
  Systems 31}, pages 6027--6037. Curran Associates, Inc., 2018.

\bibitem[Mangoubi and Vishnoi(2019)]{mangoubi2019nonconvex}
Oren Mangoubi and Nisheeth~K. Vishnoi.
\newblock Nonconvex sampling with the {M}etropolis-adjusted {L}angevin
  algorithm.
\newblock In Alina Beygelzimer and Daniel Hsu, editors, \emph{Proceedings of
  the Thirty-Second Conference on Learning Theory}, volume~99 of
  \emph{Proceedings of Machine Learning Research}, pages 2259--2293, Phoenix,
  USA, 25--28 Jun 2019. PMLR.

\bibitem[Metropolis et~al.(1953)Metropolis, Rosenbluth, Rosenbluth, Teller, and
  Teller]{metropolis1953equation}
Nicholas Metropolis, Arianna~W. Rosenbluth, Marshall~N. Rosenbluth, Augusta~H.
  Teller, and Edward Teller.
\newblock Equation of state calculations by fast computing machines.
\newblock \emph{The Journal of Chemical Physics}, 21\penalty0 (6):\penalty0
  1087--1092, 1953.

\bibitem[{Mou} et~al.(2019){Mou}, {Flammarion}, {Wainwright}, and
  {Bartlett}]{mouetal2019improvedlangevin}
Wenlong {Mou}, Nicolas {Flammarion}, Martin~J. {Wainwright}, and Peter~L.
  {Bartlett}.
\newblock {Improved bounds for discretization of {L}angevin diffusions:
  near-optimal rates without convexity}.
\newblock \emph{arXiv e-prints}, art. arXiv:1907.11331, July 2019.

\bibitem[Mou et~al.(2020)Mou, Ma, Wainwright, Bartlett, and
  Jordan]{mou2020highorder}
Wenlong Mou, Yi-An Ma, Martin~J. Wainwright, Peter~L. Bartlett, and Michael~I.
  Jordan.
\newblock High-order {L}angevin diffusion yields an accelerated {MCMC}
  algorithm, 2020.

\bibitem[Nesterov(2018)]{nesterov2018lectures}
Yurii Nesterov.
\newblock \emph{Lectures on convex optimization}, volume 137.
\newblock Springer, 2018.

\bibitem[Pillai et~al.(2012)Pillai, Stuart, and
  Thi\'{e}ry]{pillaistuartthiery2012optimalscaling}
Natesh~S. Pillai, Andrew~M. Stuart, and Alexandre~H. Thi\'{e}ry.
\newblock Optimal scaling and diffusion limits for the {L}angevin algorithm in
  high dimensions.
\newblock \emph{Ann. Appl. Probab.}, 22\penalty0 (6):\penalty0 2320--2356,
  2012.

\bibitem[Robert and Casella(2013)]{robert2013monte}
Christian Robert and George Casella.
\newblock \emph{Monte Carlo statistical methods}.
\newblock Springer Science \& Business Media, 2013.

\bibitem[Roberts and Rosenthal(1998)]{roberts1998optimal}
Gareth~O. Roberts and Jeffrey~S. Rosenthal.
\newblock Optimal scaling of discrete approximations to {L}angevin diffusions.
\newblock \emph{Journal of the Royal Statistical Society: Series B (Statistical
  Methodology)}, 60\penalty0 (1):\penalty0 255--268, 1998.

\bibitem[Roberts et~al.(1997)Roberts, Gelman, Gilks, et~al.]{roberts1997weak}
Gareth~O Roberts, Andrew Gelman, Walter~R Gilks, et~al.
\newblock Weak convergence and optimal scaling of random walk {M}etropolis
  algorithms.
\newblock \emph{The Annals of Applied Probability}, 7\penalty0 (1):\penalty0
  110--120, 1997.

\bibitem[Shen and Lee(2019)]{shen2019randomized}
Ruoqi Shen and Yin~Tat Lee.
\newblock The randomized midpoint method for log-concave sampling.
\newblock In \emph{Advances in Neural Information Processing Systems}, pages
  2100--2111, 2019.

\bibitem[Stroock(2018)]{stroock2018elements}
Daniel~W. Stroock.
\newblock \emph{Elements of stochastic calculus and analysis}.
\newblock Springer, 2018.

\bibitem[Stroock and
  Varadhan(2006)]{stroockMultidimensionalDiffusionProcesses2007}
Daniel~W. Stroock and S.~R.~Srinivasa Varadhan.
\newblock \emph{Multidimensional diffusion processes}.
\newblock Classics in Mathematics. Springer-Verlag, Berlin, 2006.
\newblock Reprint of the 1997 edition.

\bibitem[Tsybakov(2009)]{tsybakov2009nonparametric}
Alexandre~B. Tsybakov.
\newblock \emph{Introduction to nonparametric estimation}.
\newblock Springer Series in Statistics. Springer, New York, 2009.
\newblock Revised and extended from the 2004 French original, Translated by
  Vladimir Zaiats.

\bibitem[van Handel(2016)]{van2014probability}
Ramon van Handel.
\newblock Probability in high dimension, 2016.

\bibitem[Vershynin(2018)]{vershynin2018highdimprob}
Roman Vershynin.
\newblock \emph{High-dimensional probability}, volume~47 of \emph{Cambridge
  Series in Statistical and Probabilistic Mathematics}.
\newblock Cambridge University Press, Cambridge, 2018.
\newblock An introduction with applications in data science, With a foreword by
  Sara van de Geer.

\bibitem[Villani(2003)]{villani2003topics}
C\'{e}dric Villani.
\newblock \emph{Topics in optimal transportation}, volume~58 of \emph{Graduate
  Studies in Mathematics}.
\newblock American Mathematical Society, Providence, RI, 2003.

\bibitem[Wainwright(2019)]{wainwright2019statistics}
Martin~J. Wainwright.
\newblock \emph{High-dimensional statistics}, volume~48 of \emph{Cambridge
  Series in Statistical and Probabilistic Mathematics}.
\newblock Cambridge University Press, Cambridge, 2019.
\newblock A non-asymptotic viewpoint.

\end{thebibliography}

\end{document}